\documentclass[a4paper,11pt,reqno,noindent]{amsart}
\usepackage[centertags]{amsmath}
\usepackage{mathtools} 
\usepackage{amsfonts,amssymb,amsthm,dsfont,cases,amscd,esint,enumerate}
\usepackage[T1]{fontenc}
\usepackage[english]{babel}
\usepackage[applemac]{inputenc}
\usepackage{newlfont,cancel}
\usepackage{color}
\usepackage[body={15cm,21.5cm},centering]{geometry} 
\usepackage{fancyhdr}
\usepackage{enumerate} 

\theoremstyle{plain}
\newtheorem{lem}{Lemma}[section]
\newtheorem{theor}[lem]{Theorem}
\newtheorem{prop}[lem]{Proposition}

\theoremstyle{definition}
\newtheorem{rem}[lem]{Remark}
\newtheorem{rems}[lem]{Remarks}

\numberwithin{equation}{section}

\newcommand{\e}{\varepsilon}

\newcommand{\N}{\mathbb N}
\newcommand{\Dd}{\mathbb D}
\newcommand{\R}{\mathbb R}
\newcommand{\E}{\mathbb E}

\newcommand{\Pc}{\mathcal P}
\newcommand{\Lc}{\mathcal L}
\newcommand{\Dc}{\mathcal D}

\newcommand{\Id}{\operatorname{Id}}

\newcommand{\Div}{{\operatorname{div}}}
\newcommand{\udiv}{{\operatorname{div}}}
\newcommand{\loc}{{\operatorname{loc}}}

\newcommand{\Ld}{L}
\newcommand{\cvf}{\rightharpoonup}

\usepackage[colorlinks,linkcolor=black,citecolor=black,urlcolor=black]{hyperref}

\title[Derivation of 2D Vlasov-Poisson]{Derivation of 2D Vlasov-Poisson for classical particles with Coulomb interactions}
\author[M.~Duerinckx]{Mitia Duerinckx}
\address{\sc M. Duerinckx. Universit\'e Libre de Bruxelles, D\'epartement de Ma\-th\'e\-matiques, 1050 Brussels, Belgium}
\email{mitia.duerinckx@ulb.be}
\author[P.-E. Jabin]{Pierre-Emmanuel Jabin}
\address[Pierre-Emmanuel Jabin]{Penn State University, Department of Mathematics, State College, PA 16802, USA}
\email{pejabin@psu.edu}

\begin{document}
\begin{abstract}
We obtain the first derivation of the 2D Vlasov-Poisson equation for classical particles with Coulomb interactions without any microscopic cutoff. The proof relies on our recent dual hierarchical approach to mean-field limits, together with a refined analysis of dual BBGKY hierarchies on linearized correlations, based on kinetic regularization effects. The result holds more generally for arbitrary singular interaction forces $K\in L^{2-\eta}_\loc$ with~$\eta>0$ small enough, in any dimension, and it extends to Brownian particles. It holds globally in time as long as the mean-field solution is regular enough.
\end{abstract}

\maketitle
\setcounter{tocdepth}{1}
\tableofcontents
\allowdisplaybreaks

\section{Introduction}

\subsection{General overview}
We consider a system of $N\ge2$ exchangeable classical particles in phase space $\Dd:=\R^d\times\R^d$, interacting through a pairwise force kernel $K:\R^d\to\R^d$. Denoting by $(X_{i,N},V_{i,N})$ the position and velocity of particle $i$, the dynamics is given by
\[\left\{\begin{array}{ll}
dX_{i,N}=V_{i,N}dt,\\
dV_{i,N}=\frac1{N-1}\sum_{j:j\ne i}^NK(X_{i,N}-X_{j,N})dt+\sqrt{2\alpha}dB_{i,N},
\end{array}\right.\]
where $\alpha\ge0$ and the $B_{i,N}$'s are independent Brownian motions. The deterministic Newton dynamics is recovered for $\alpha=0$. Throughout, we assume that $K$ is odd, in accordance with the action-reaction principle. 
Turning to a statistical description of the system, let $F_N$ denote the joint density on the $N$-particle phase space $\Dd^N$. Formally, the above dynamics then leads to the Liouville equation
\begin{equation}\label{eq:Liouville}
\partial_tF_N+\sum_{i=1}^N(v_i\cdot\nabla_{x_i}-\alpha\triangle_{v_i})F_N+\frac1{N-1}\sum_{i\ne j}^NK(x_i-x_j)\cdot\nabla_{v_i}F_N\,=\,0.
\end{equation}
Exchangeability of the particles amounts to the symmetry of $F_N$ under permutations of the particle variables $z_i:=(x_i,v_i)\in\Dd$.
In the macroscopic limit $N\uparrow\infty$, we aim at an averaged description of finite subsets of typical particles, as encoded by the marginals
\[F_{N,k}(z_1,\ldots,z_k):=\int_{\Dd^{N-k}}F_N(z_1,\ldots,z_N)\,dz_{k+1}\ldots dz_N,\qquad1\le k\le N.\]
If the particles are initially asymptotically independent with one-particle density $f_\circ$, one expects propagation of chaos,
\[F_{N,k}(t)\to f(t)^{\otimes k},\quad\text{for every fixed $k\ge0$},\]
where $f$ solves the Vlasov equation
\begin{equation}\label{eq:Vlasov}
\partial_t  f+v\cdot\nabla_xf-\alpha\triangle_vf+(K\ast f)\cdot\nabla_vf=0,
\qquad f|_{t=0}=f_\circ,
\end{equation}
with $K\ast f(x):=\int_\Dd K(x-x')f(x',v')dx'dv'$.
For Lipschitz interaction forces, this mean-field approximation goes back to the classical works of Neunzert and Wick~\cite{Neunzert-Wick-74}, Braun and Hepp~\cite{Braun-Hepp-77}, and Dobrushin~\cite{Dobrushin-79} in the deterministic setting, and to the general theory of propagation of chaos for McKean-Vlasov diffusion in the stochastic setting, e.g.~\cite{Sznitman-91}.
For singular interactions, however, the problem is considerably more delicate: the singularity destroys the stability of the particle dynamics, while close particle encounters may generate large microscopic forces. In particular, the derivation of the Vlasov-Poisson equation from particle dynamics without microscopic regularization had remained open so far in dimensions $d\ge2$.

In~\cite{BDJ-24}, together with D.\@ Bresch, we introduced a dual hierarchical approach to mean-field limits. Rather than estimating the BBGKY hierarchy of marginals directly, which leads to a problematic loss of velocity derivatives, the method focuses on the backward evolution of dual observables and on the associated hierarchy of dual correlations. This allowed us to treat arbitrary forces~{$K\in L^2_\loc(\R^d)$,} at both positive and zero temperature $\alpha\ge0$, without requiring any sign condition or potential structure of the interaction. While this substantially enlarged the admissible class of singular kernels, it still fell short of the 2D Coulomb force, which belongs only to the weak space $L^{2,\infty}$.

\medskip
The main purpose of the present work is to go beyond this $L^2_\loc$ threshold. Building on the same dual hierarchical framework, we develop a refined analysis of the dual BBGKY hierarchy that exploits kinetic regularization effects. In the diffusive case $\alpha>0$, this relies on hypoelliptic regularity estimates, whereas in the non-diffusive case $\alpha=0$ it relies on velocity averaging; see Propositions~\ref{prop:bnd-diff} and~\ref{prop:bnd-nodiff}. These estimates provide fractional spatial regularity that allows to pass to the limit in the singular interaction terms of the dual hierarchy. To our knowledge, this is the first hierarchical use of kinetic regularization. Its implementation raises some specific technical difficulties; see in particular Section~\ref{sec:averaging}.

The final ingredient is uniqueness for the limiting dual hierarchy, which is needed to identify the limit and recover the Vlasov equation~\eqref{eq:Vlasov}. Such uniqueness cannot be obtained through direct PDE estimates on the hierarchy, since the available kinetic estimates provide regularity in only one particle variable at a time, whereas a direct argument would require mixed regularity simultaneously in all particle variables.
To remedy this, we instead combine kinetic regularization with the explicit triangular structure of the limiting hierarchy; see Section~\ref{sec:uniqueness}.

\subsection{Main result}
Our main result applies to interaction forces~$K\in L^{2-\eta}_\loc(\R^d)$ for sufficiently small~$\eta>0$, thus getting strictly beyond the $L^2_\loc$ threshold of~\cite{BDJ-24}. In particular, it includes the 2D Coulomb force $K(x)\simeq\pm\frac{x}{|x|^2}$, which only belongs to $L^{2,\infty}$. For $\alpha=0$, this yields the first derivation of the 2D Vlasov-Poisson equation from particle dynamics without microscopic regularization. For $\alpha>0$, it complements the recent short-time derivation of the 2D Vlasov-Poisson-Fokker-Planck equation in~\cite{BJS-22}, by providing a longer-time result under weaker assumptions on initial data.

The argument does not use repulsiveness and applies to forces that need not derive from a potential. It holds globally in time as long as the mean-field solution remains sufficiently regular. In contrast with our previous work~\cite{BDJ-24}, also note that the initial data need not be exactly factorized: it is enough to assume a uniform relative $L^2$ bound, cf.~\eqref{eq:initial} below, which is the natural condition for the duality framework used in the proof.

\begin{theor}\label{th:main}
Let $\alpha\ge0$, let $f_\circ\in\Pc\cap\Ld^\infty(\Dd)$, and let
\[K\in L^{2d/(d+2\gamma)}_\loc(\R^d),\qquad \sup_{|x|\ge1}|K(x)|<\infty,\]
where $\gamma$ satisfies
\[0\le\gamma<\gamma_*:=\left\{\begin{array}{lll}
\frac16&:&\alpha>0,\\[1mm]
\frac{1}{16}\big(5d+4-\sqrt{(5d+4)^2-32d}\big)=\tfrac15+O(\tfrac1d)&:&\alpha=0,\,d\ge2,\\[1mm]
\frac12(\sqrt5-2)&:&\alpha=0,\,d=1.
\end{array}\right.\]
For every $N\ge2$, let $F_N^\circ\in\Pc\cap\Ld^\infty(\Dd^N)$ be exchangeable and assume the following $L^2$ form of $f_\circ$-chaoticity:
\begin{equation}\label{eq:initial}
\limsup_{N\uparrow\infty}\int_{\Dd^N}\frac{|F_N^\circ|^2}{f_\circ^{\otimes N}}<\infty.
\end{equation}
Let $F_N\in L^\infty(\R^+;\Ld^1\cap\Ld^\infty(\Dd^N))$ be an exchangeable global weak duality solution of~\eqref{eq:Liouville}, with initial data $F_N^\circ$, in the sense of~\cite[Appendix]{BDJ-24}. Let $f$ be a weak solution of~\eqref{eq:Vlasov} with initial data~$f_\circ$. Assume that, for some~$T_0>0$, the latter satisfies on $[0,T_0]$ the regularity requirements stated below, namely~\eqref{eq:reg-req-0}, \eqref{eq:reg-req-0b}, together with~\eqref{eq:reg-req-2}--\eqref{eq:reg-req-2b} if~$\alpha>0$, and with~\eqref{eq:reg-req-1} if~$\alpha=0$.
Then, for all $k\ge1$ and $t\in[0,T_0]$,
\begin{equation}\label{eq:concl-mfl}
F_{N,k}(t)\to f(t)^{\otimes k}\quad\text{in $\Dc'(\Dd^k)$}.
\end{equation}
\end{theor}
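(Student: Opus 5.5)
The plan follows the dual hierarchical approach of~\cite{BDJ-24}. Fix $k\ge1$, $t\in[0,T_0]$ and a smooth test function $\phi_k$ on $\Dd^k$. By duality, $\int \phi_k F_{N,k}(t)=\int \Phi_N(0)\,F_N^\circ$, where $\Phi_N$ solves the backward dual Liouville equation on $[0,t]$ with final datum the symmetrized $\phi_k$. We write $\Phi_N=f^{\otimes N}$-weighted observables and decompose them into dual correlation functions $\{G_{N,m}\}_{0\le m\le N}$, in the style of cluster expansions, relative to the mean-field flow. These correlations satisfy a dual BBGKY hierarchy. Its transport part is the linearized Vlasov operator around $f$, and its coupling terms involve $K(x_i-x_j)$ acting either on $\nabla_v$ or on $\nabla_v f$, and carry prefactors $m/N$ or $m^2/N$.

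The first step is to derive uniform-in-$N$ a priori bounds on the $G_{N,m}$ in $L^2(f^{\otimes m})$, weighted by $N^{-(m-k)/2}$-type scalings. Condition~\eqref{eq:initial} lets us control $\int\Phi_N(0)F_N^\circ$ by Cauchy–Schwarz in $L^2(f_\circ^{\otimes N})$. The singular terms are only controlled because $K\in L^{2d/(d+2\gamma)}$ embeds, by Sobolev duality, into $H^{-\gamma}$-type spaces in one variable. We therefore need $\gamma$ fractional spatial derivatives on each particle variable. This is where Proposition~\ref{prop:bnd-diff} (hypoelliptic gain when $\alpha>0$) and Proposition~\ref{prop:bnd-nodiff} (velocity averaging when $\alpha=0$) enter, each using the regularity requirements on $f$. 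The energy estimates then close through a Grönwall/combinatorial argument over the hierarchy. The restriction $\gamma<\gamma_*$ is exactly what we expect to come out of balancing the averaging exponent against the loss in the coupling terms.

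With these bounds in hand, we extract weak limits $G_{m}$ of $G_{N,m}$ along subsequences, for each $m$. The terms with a $1/N$ prefactor vanish. For the remaining singular product terms $K(x_i-x_j)G_{N,m}$, we pass to the limit using the compactness given by the fractional regularity in a single variable, together with the averaging lemma: strong convergence in $x_i$ is paired against weak convergence. The handling of averaging for hierarchical objects, as in Section~\ref{sec:averaging}, is where care is needed. The limit $\{G_m\}$ then satisfies the limiting dual hierarchy, which is triangular, and the initial chaos assumption makes the limiting pairing with $F_N^\circ$ reduce to the $m=0$ contribution.

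The main obstacle is uniqueness for the limiting hierarchy, which identifies the limit $\int\phi_k f(t)^{\otimes k}$. Direct estimates fail because regularity is available in only one variable at a time. The approach is to exploit triangularity, following Section~\ref{sec:uniqueness}. We solve the hierarchy explicitly from the top level $m=k$ downward. Each equation is a linearized Vlasov equation in one new variable with source given by the already-determined higher levels. Uniqueness then follows level by level, using one-variable kinetic regularity for the linearized Vlasov operator around the regular $f$. Consequently every subsequence has the same limit, the mean-field value, which gives~\eqref{eq:concl-mfl}.
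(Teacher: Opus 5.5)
Your overall architecture (dual correlations relative to $f^{\otimes N}$, uniform kinetic regularity, weak compactness, uniqueness for the limiting hierarchy) matches the paper's. However, two steps fail as written: the uniqueness argument, and the use of \eqref{eq:initial}.

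\textbf{Uniqueness.} In the limiting hierarchy $\partial_t\bar C_m+L_f^m\bar C_m=\sqrt{(m+1)(m+2)}\,K_f[\bar C_{m+2}]$, level $m$ is driven by level $m+2$. The coupling therefore points upward, and once $N=\infty$ there is no top level. For the difference of two solutions every level has zero final data, yet no level is known to vanish.
\begin{itemize}
\item Solving ``from $m=k$ downward, with the higher levels already determined'' presupposes exactly the uniqueness you want.
\item Level-by-level kinetic estimates cannot close either: the a priori bounds deteriorate with $m$ and give regularity in only one variable.
\end{itemize}
What is missing is a quantitative control of the infinite Duhamel iteration. The paper iterates Duhamel $\ell$ times. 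It then uses the symmetry of the time-ordered simplex to rewrite the result as $\frac1{(\ell-1)!}\big(\int_t^sB_{s,r}^{\otimes2}K_{f_r}\,dr\big)^{\otimes(\ell-1)}$ paired with $K_{f_s}[C^s_{m+2\ell}]$. The kinetic regularization is placed on the time-integrated kernel, not on the $C_m$'s. Applying hypoellipticity or averaging to $(\partial_t+\hat L_f^2)S=K_f$ gives $\|\int_s^tB_{t,r}^{\otimes2}K_{f_r}\,dr\|_{L^2((f_t^{\otimes2})^{2\delta})}\lesssim e^{C(t-s)}\sqrt{t-s}$. This first requires constructing the linearized propagator $B_{t,s}$ on the weighted space $L^2(f^{2\delta})$, so that it pairs with the $f^{1-\delta}$-weighted a priori bounds. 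The factor $1/(\ell-1)!$ beats $\sqrt{(m+2\ell)!/m!}$ and the growth of the a priori bounds. Hence the $\ell$-th iterate tends to $0$ on a short interval $[T-\tau_*,T]$, and one concludes by iterating in time.

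\textbf{Use of \eqref{eq:initial}.} With the polynomial final datum $\phi_k$ and only weak limits, your claim that \eqref{eq:initial} reduces the pairing with $F_N^\circ$ to its $m=0$ part is unjustified. Write $\int\Phi_N(0)F_N^\circ=\sum_m\int\bar C_{N,m}(0)\,\bar H^\circ_{N,m}\,f_\circ^{\otimes m}$, with $\bar H^\circ_{N,m}:=\binom{N}{m}^{1/2}\Pi_{N,m}[F_N^\circ/f_\circ^{\otimes N}]$. Assumption \eqref{eq:initial} only makes $\sum_m\|\bar H^\circ_{N,m}\|^2_{L^2(f_\circ^{\otimes m})}$ bounded, not small; it vanishes only for exactly tensorized data. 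So $\bar C_{N,m}(0)\rightharpoonup0$ does not suffice. Cauchy--Schwarz needs the strong convergence $\|\Phi_N(0)-\Pi_{N,0}[\Phi_N(0)]\|_{L^2(f_\circ^{\otimes N})}\to0$.

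The paper obtains this with exponential observables $\Phi_N^T=\exp\big(\frac{i\lambda}N\sum_j(h(z_j)-\int hf(T))\big)$. These satisfy $|\Phi_N^T|\le1$ and $\Phi_N^T\to1$ in $L^2(f(T)^{\otimes N})$. The identity $\int\Phi_N^Tf(T)^{\otimes N}-\int\Phi_N(t)f(t)^{\otimes N}=N\int_t^T\int K_fC_{N,2}f^{\otimes2}$, together with $K_f[\bar C_{N,2}]\rightharpoonup0$ (from uniqueness), gives $\Pi_{N,0}[\Phi_N](t)\to1$. The $L^\infty$ bound then gives $\|\Phi_N(t)-1\|^2_{L^2(f(t)^{\otimes N})}\le2\Re\big(1-\Pi_{N,0}[\Phi_N](t)\big)\to0$. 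Convergence of marginals follows from convergence of the characteristic functional of the empirical measure.

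This extremality argument is unavailable for $\phi_k$, since in general $|\int\phi_kf^{\otimes k}|<\|\phi_k\|_{L^\infty}$. You would therefore need an additional argument to get strong convergence.
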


We now display the precise regularity assumptions needed on the mean-field solution~$f$. In our previous work~\cite{BDJ-24}, which treated the less singular regime $K\in L^2_\loc(\R^d)$, it was enough to assume
\begin{equation}\label{eq:reg-req-0}
f\,\in\, L^\infty(0,T_0;\Pc\cap L^\infty(\Dd)),\qquad K\ast f \,\in\, L^\infty(0,T_0;W^{1,\infty}(\R^d)),
\end{equation}
together with the time-integrability of the Fisher information,
\[\int_0^{T_0}\Big(\int_{\Dd}|\nabla_v\log f|^2f\Big)^{\frac12}<\infty.\]
The more singular interactions considered here require stronger regularity assumptions. More precisely, on top of~\eqref{eq:reg-req-0}, we assume throughout
\begin{equation}\label{eq:reg-req-0b}
f^{-1}\,\in\,L^\infty_\loc([0,T_0]\times\Dd),
\end{equation}
as well as the following weighted regularity for logarithmic derivatives:
\begin{enumerate}[---]
\item in the diffusive case $\alpha>0$, we assume that, for some~$\delta\in(0,\frac12]$,
\begin{gather}
f^\delta,~f^\delta\nabla_v\log f\,\in\, L^\infty(0,T_0;L^2(\Dd)\cap L^\infty_xL^2_v(\Dd)),\label{eq:reg-req-2}\\
f^{(\frac12-\delta)\wedge\frac\delta2}\big(|\nabla_v\log f|^2+|\triangle_v\log f|\big)\,\in\, L^\infty([0,T_0]\times\Dd),\nonumber
\end{gather}
and
\begin{equation}\label{eq:reg-req-2b}
\triangle_v\log f
+\delta|\nabla_v\log f|^2
\ge-C
\qquad\text{a.e. on $[0,T_0]\times\Dd$};
\end{equation}
\item in the non-diffusive case $\alpha=0$, we assume that, for some~$\delta\in(0,\frac12]$,
\begin{gather}
f^\delta,~f^\delta\nabla_v\log f\,\in\,\Ld^\infty(0,T_0;L^2_xH^{3/2}_v(\Dd)\cap L^\infty_xH^{3/2}_v(\Dd)),\label{eq:reg-req-1}\\
f^{\frac12-\delta}\nabla_v\log f\,\in\, \Ld^\infty([0,T_0]\times\Dd).\nonumber
\end{gather}
\end{enumerate}
For the choice $\delta=\frac12$, the lower bound~\eqref{eq:reg-req-2b} follows from the boundedness assumptions in~\eqref{eq:reg-req-2}, and the assumptions thus simplify substantially --- at the price of requiring the boundedness of $\nabla_v\log f$ itself, which is incompatible with Maxwellian velocity tails. Retaining $\delta<\frac12$ allows for a more flexible weighted condition.

\begin{rems}
A few comments are in order:
\begin{enumerate}[(a)]
\item \emph{Initial chaos assumption:}
Assumption~\eqref{eq:initial} is a uniform relative $L^2$-bound, or equivalently a uniform bound on the $\chi^2$-divergence of $F_N^\circ$ from the tensorized law $f_\circ^{\otimes N}$. It is of course weaker than the exact tensorization assumed in our previous work~\cite{BDJ-24}, but it is stronger than the usual notions of Kac or entropic chaos. Indeed, in terms of the Hoeffding decomposition $(H_{N,m}^\circ)_m$ of $F_N^\circ$ introduced in Section~\ref{sec:Hoeffding} below, we can bound, for all $1\le k\le N$,
\begin{multline*}
\qquad\int_{\Dd^k}\Big|\frac{F_{N,k}^\circ}{f_\circ^{\otimes k}}-1\Big|^2f_\circ^{\otimes k}
=\sum_{m=1}^k\binom{k}{m}\int_{\Dd^m}|H_{N,m}^\circ|^2f_\circ^{\otimes m}\\
\le\frac kN\sum_{m=1}^N\binom{N}{m}\int_{\Dd^m}|H_{N,m}^\circ|^2f_\circ^{\otimes m}
=\frac kN\Big(\int_{\Dd^N}\frac{|F_N^\circ|^2}{f_\circ^{\otimes N}}-1\Big),
\end{multline*}
so that~\eqref{eq:initial} implies quantitative $f_\circ$-chaoticity of every fixed marginal, with an $O(N^{-1/2})$ error bound in $L^2$. It also implies, by Jensen's inequality,
\[H(F_N^\circ|f_\circ^{\otimes N}):=\int_{\Dd^N}F_N^\circ\log\Big(\frac{F_N^\circ}{f_\circ^{\otimes N}}\Big)\le\log\int_{\Dd^N}\frac{|F_N^\circ|^2}{f_\circ^{\otimes N}}=O(1),\]
while relative-entropy approaches generally work under the substantially weaker condition $\frac1NH(F_N^\circ|f_\circ^{\otimes N})\to0$; see e.g.~\cite{Jabin-Wang-16}.
Assumption~\eqref{eq:initial} is nevertheless the natural condition for our $L^2$-duality method as it is equivalent to uniform square summability of the initial Hoeffding components; see Section~\ref{sec:dual-method} below.

\smallskip\item \emph{Range of $\gamma$:}
The restriction on $\gamma$ in Theorem~\ref{th:main} stems from the uniform-in-$N$ regularity estimates of Propositions~\ref{prop:bnd-diff} and~\ref{prop:bnd-nodiff}, rather than from any apparent criticality of the limiting dual hierarchy. Indeed, as discussed in Remark~\ref{rem:restr-gamma-uniqueness}, the limiting hierarchy is expected to be well-posed for $\gamma\le\frac56$ when $\alpha>0$ and for $\gamma\le\frac12$ when $\alpha=0$. At present, however, we do not know how to establish the required uniform-in-$N$ regularity estimates throughout this larger range.\\In the diffusive case $\alpha>0$, we rely on hypoellipticity for simplicity; this yields a particularly clean argument, at the cost of the restriction~$\gamma<\frac16$. Using an averaging argument, along the lines of the one used when $\alpha=0$, together with the additional regularization in $v$ provided by the diffusion, would extend the admissible range to
\[\gamma<\gamma_*:=\tfrac14\big(2d+1-\sqrt{4d^2+1}\big)=\tfrac14+O(\tfrac1d).\]
Since this refinement would not bring any additional physically-relevant cases within the scope of the result, we do not pursue it here.

\smallskip\item \emph{Singularity class:}
Since the kinetic estimates yield genuine fractional Sobolev regularity, rather than merely improved integrability, a direct inspection of the proof shows that the assumption on the interaction kernel can be relaxed to
\[K\in H^{-\gamma}_{\loc}(\R^d),\qquad\sup_{|x|\ge1}|K(x)|<\infty,\]
with the same range of $\gamma$, at the cost of strengthening the regularity assumptions on the mean-field solution $f$. By the Sobolev embedding $L^{2d/(d+2\gamma)}_\loc(\R^d)\subset H^{-\gamma}_\loc(\R^d)$, this improves the above statement.
\end{enumerate}
\end{rems}

\subsection{Comparison to previous results}\label{sec:litt}
For Lipschitz interaction forces, the mean-field limit goes back to the classical works~\cite{Neunzert-Wick-74,Braun-Hepp-77,Dobrushin-79,Sznitman-91}. Derivative assumptions on the force were later removed in~\cite{Jabin-Wang-16}, where arbitrary bounded interaction forces were treated by a relative entropy method at the level of the Liouville equation.

For unbounded forces, substantially less was previously known. For long, in dimensions~$d\ge2$, the only results have remained those of~\cite{HaurayJabin-07,HaurayJabin-15}, which cover mildly singular forces with $|K(x)|\lesssim|x|^{-c}$ and $|\nabla K(x)|\lesssim|x|^{-c-1}$ for some $c<1$, for vanishing temperature $\alpha=0$, thus missing the Coulomb singularity even in 2D. The dual hierarchical method that we introduced in~\cite{BDJ-24} with D.\@ Bresch allowed to cover arbitrary forces $K\in L^2_{\loc}$, at both positive and vanishing temperature, without any repulsiveness or potential structure of the interactions. This substantially enlarged the admissible class of singularities, but still excluded the 2D Coulomb force, which belongs to $L^{2,\infty}$.

In the diffusive setting $\alpha>0$, a direct hierarchical approach was developed in~\cite{BJS-22} for repulsive forces deriving from a nonnegative potential with suitable exponential integrability. By exploiting the velocity diffusion to compensate for the derivative loss in the BBGKY hierarchy, this yielded the first derivation of the 2D Vlasov-Poisson-Fokker-Planck equation, together with a partial 3D result, but it is restricted to a short time interval and requires strong structural conditions on initial data. The present work follows a different, dual route; it requires no repulsiveness assumption, nor a potential structure of the interactions, it applies globally in time as long as the mean-field solution remains regular enough, and, crucially, it also covers the non-diffusive case $\alpha=0$.

A large complementary literature considers Coulomb or nearly Coulomb forces regularized at an $N$-dependent microscopic scale. In 3D, forces arbitrarily close to Coulomb were treated in~\cite{BoersPickl-16} with cutoff radius~$N^{-1/3}$, while the Coulomb force with cutoff radius~$N^{-1/3+\e}$ was reached in~\cite{LazaroviciPickl-17}. Corresponding results in the diffusive case $\alpha>0$ were also obtained in~\cite{CarrilloChoiSalem-19,HuangLiuPickl-20}. Probabilistic trajectory methods have now reached the cutoff radius $N^{-5/12+\e}$ for the 3D Coulomb force~\cite{FeistlHeldPickl-25-3D}, and the considerably smaller cutoff radius $N^{-2}$ in 2D~\cite{FeistlHeldPickl-25-2D}. These results approximate the exact microscopic force remarkably closely, but retain a nonzero cutoff for every finite $N$.

Finally, we emphasize that more singular interactions can be handled without cutoff for simpler dynamics or regimes. For first-order dynamics, in particular, the modulated energy method yields the mean-field limit for Coulomb and Riesz interactions in arbitrary dimension, and also applies to second-order dynamics in the monokinetic regime~\cite{Serfaty-20,Bresch-Jabin-Wang-20}. The recent multiscale mollification method of~\cite{Serfaty-Nguyen-26} further applies to a broad class of attractive or repulsive first-order interactions, not necessarily deriving from a potential.
In a different direction, for second-order Hamiltonian dynamics in a perturbative regime near Gibbs equilibrium, we have derived in~\cite{DJ-26} the mean-field limit toward the linearized Vlasov equation for a broad class of singular forces including Coulomb in dimensions $d\le3$. The present work addresses instead the fully nonlinear mean-field limit for second-order kinetic dynamics away from equilibrium, where neither the first-order structure nor the perturbative control around equilibrium is available.


\section{Dual hierarchical method}\label{sec:dual-method}
We start by recalling the dual hierarchical approach to mean-field limits, which we introduced in~\cite{BDJ-24} with D. Bresch.
We further refine it here to account for the weaker initial chaos assumption~\eqref{eq:initial}; see in particular Lemma~\ref{lem:duality} below.
The method proceeds by reformulating the mean-field limit in terms of the dual Liouville evolution on the space of test functions, then applying the Hoeffding chaos decomposition with respect to the mean-field distribution $f^{\otimes N}$, and finally noting that the obstruction related to losses of velocity derivatives in the BBGKY hierarchy is compensated by the fluctuation scaling.

\subsection{Hoeffding chaos decomposition}\label{sec:Hoeffding}
Given a symmetric function $G_N\in L^2(f^{\otimes N})$, we consider its Hoeffding components with respect to the product mean-field measure $f^{\otimes N}$: for~$0\le m\le N$,
\[\Pi_{N,m}[G_N]:=\big((1-\pi_f)^{\otimes m}\otimes \pi_f^{\otimes N-m}\big)\,G_N,\]
in terms of the projection
\[\pi_fh:=\int_\Dd hf.\]
By definition, $\Pi_{N,m}[G_N]$ is a symmetric function on $\Dd^m$ and satisfies
\[\int_\Dd\Pi_{N,m}[G_N](z_1,\ldots,z_m)\,f(z_j)\,dz_j=0\quad\text{for all $1\le j\le m$}.\]
The Hoeffding decomposition of $G_N$ then reads
\[G_N(z_1,\ldots,z_N)\,=\,\sum_{m=0}^N\sum_{\sigma\subset [N]\atop\sharp\sigma=m}\Pi_{N,m}[G_N](z_{\sigma}),\]
with the short-hand notation $[N]=\{1,\ldots,N\}$ and $z_\sigma=(z_{i_1},\ldots,z_{i_\ell})$ for $\sigma=\{i_1,\ldots,i_\ell\}$.
By orthogonality between Hoeffding components, we deduce
\begin{equation}\label{eq:orthog}
\int_{\Dd^N}|G_N|^2f^{\otimes N}=\sum_{m=0}^N\binom{N}{m}\int_{\Dd^m}|\Pi_{N,m}[G_N]|^2f^{\otimes m}.
\end{equation}
Applied to the relative density $F_N/f^{\otimes N}$ of the $N$-particle distribution with respect to the mean-field distribution $f^{\otimes N}$,
the Hoeffding components
\[H_{N,m}:=\Pi_{N,m}\Big[\frac{F_N}{f^{\otimes N}}\Big]\]
coincide with the linearized cumulants first considered in~\cite{PPS-19} to characterize corrections to mean field. In terms of marginals, they read
\[H_{N,0}=1,\quad H_{N,1}f=F_{N,1}-f,\quad H_{N,2}f^{\otimes2}=F_{N,2}-2F_{N,1}\otimes_\sigma f+f\otimes f,\]
and so on, where $\otimes_\sigma$ stands for symmetric tensor product.
In these terms, the mean-field limit simply amounts to showing $H_{N,m}\to0$ for all~$m\ge1$. As the norm $\int_{\Dd^N}|F_N|^2/f^{\otimes N}$ is not conserved by the Liouville flow, however, no useful estimate on these correlation functions can be deduced from identity~\eqref{eq:orthog}.
In contrast, in case of bounded observables, since the~$L^\infty$ norm is formally conserved by the Liouville flow, we can use~\eqref{eq:orthog} in the form
\[\int_{\Dd^m}|\Pi_{N,m}[G_N]|^2f^{\otimes m}\le\binom{N}{m}^{-1}\|G_N\|_{L^\infty}^2\le C_mN^{-m}\|G_N\|_{L^\infty}^2.\]
This is what motivates the need for a duality argument: passing to the dual, we can focus on bounded observables and the above then provides nontrivial a priori estimates.

\subsection{Dual reformulation}
We show that the mean-field limit follows from the weak convergence of the second Hoeffding component of bounded backward solutions of the dual Liouville equation. This provides a slight refinement of our dual reformulation of mean-field limits in~\cite{BDJ-24}, as we allow here for the weaker initial chaos assumption~\eqref{eq:initial}. To this aim, the proof proceeds by examining exponential, instead of polynomial, observables: we consider the characteristic function of the empirical measure and characterize its strong convergence.

\begin{lem}\label{lem:duality}
Let $\alpha\ge0$ and $K\in L^1_\loc(\R^d)^d$. Let~$F_N\in L^\infty(\R^+;\Ld^1\cap\Ld^\infty(\Dd^N))$ be an exchangeable global weak duality solution of the Liouville equation~\eqref{eq:Liouville}, in the sense of~\cite[Appendix]{BDJ-24}, with exchangeable initial data $F_N^\circ\in \Pc\cap L^\infty(\Dd^N)$ satisfying the $L^2$ chaos assumption~\eqref{eq:initial} for some $f_\circ\in\Pc\cap\Ld^\infty(\Dd)$. Let~$f$ be a bounded weak solution of the Vlasov equation~\eqref{eq:Vlasov} with initial data $f^\circ$, and assume that, for some $T_0>0$,
\[f\in L^\infty([0,T_0]\times\Dd),\qquad K\ast f\in L^\infty([0,T_0]\times\R^d),\qquad\nabla_vf\in L^1([0,T_0]\times\Dd).\]
For any $T\in(0,T_0]$ and any symmetric $\Phi_N^T\in L^\infty(\Dd^N)$ such that
\begin{equation}\label{eq:conv-PhiNT-ass}
\|\Phi_N^T\|_{L^\infty(\Dd^N)}\le1,\qquad \lim_{N\uparrow\infty}\|\Phi_N^T-1\|_{L^2(f^{\otimes N})}=0,
\end{equation}
consider a global bounded weak solution $\Phi_N\in L^\infty([0,T]\times\Dd^N)$ of the backward dual Liouville equation on $[0,T]$,
\begin{equation}\label{eq:Liouville-dual}
\partial_t\Phi_N+\sum_{i=1}^N(v_i\cdot\nabla_{x_i}+\alpha\triangle_{v_i})\Phi_N+\frac1{N-1}\sum_{i\ne j}^NK(x_i-x_j)\cdot\nabla_{v_i}\Phi_N\,=\,0,
\end{equation}
with final data $\Phi_N|_{t=T}=\Phi_N^T$ and with
\begin{equation}\label{eq:conser-Linfty}
\sup_{[0,T]}\|\Phi_N\|_{L^\infty(\Dd^N)}\le1.
\end{equation}
Assume that for any such $\Phi_N$ we have
\begin{equation}\label{eq:conv-CN2-0}
N\int_0^T\Big|\int_{\Dd^2}K_f\,\Pi_{N,2}[\Phi_N]\,f^{\otimes 2}\Big|\,\to\,0\quad\text{as $N\uparrow\infty$},
\end{equation}
where we have defined
\begin{equation}\label{eq:def-Vf}
K_f(z,z')\,:=\,\big(K(x-x')-(K\ast f)(x)\big)\cdot\nabla_v\log f(z).
\end{equation}
Then we have, for all $k\ge1$ and $t\in[0,T_0]$,
\begin{equation}\label{eq:concl-MFL}
F_{N,k}(t)\,\to\,f(t)^{\otimes k}\quad\text{in $\Dc'(\Dd^k)$.}
\end{equation}
\end{lem}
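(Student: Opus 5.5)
Let $\Phi_N$ be a bounded backward solution of~\eqref{eq:Liouville-dual} on $[0,T]$. By the weak duality formulation of~\cite[Appendix]{BDJ-24}, the pairing is conserved:
\[\int_{\Dd^N}F_N(T)\Phi_N^T=\int_{\Dd^N}F_N^\circ\,\Phi_N(0).\]
We compare this with the pairing of $f^{\otimes N}$ against $\Phi_N$. Since $f^{\otimes N}$ solves the forward Liouville equation only up to an error, differentiating $\int\Phi_N f^{\otimes N}$ in time and using the Vlasov equation together with the oddness of $K$ gives
\[\frac{d}{dt}\int_{\Dd^N}\Phi_Nf^{\otimes N}=-\frac1{N-1}\sum_{i\ne j}\int_{\Dd^N}\Phi_N\,K_f(z_i,z_j)\,f^{\otimes N}.\]
This uses $\nabla_{v_i}f^{\otimes N}=\nabla_v\log f(z_i)f^{\otimes N}$, and it is rigorous under the stated assumptions ($\nabla_vf\in L^1$, $K\ast f\in L^\infty$, $f$ bounded).

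Next we expand $\Phi_N$ in its Hoeffding decomposition. Since $\int K_f(z,z')f(z')dz'=0$ and $\int K_f(z,z')f(z)dz=-\int\nabla_v\cdot(\ldots)=0$ by integration by parts in $v$, only $\Pi_{N,2}[\Phi_N]$ survives, and the right-hand side equals $-N\int K_f\,\Pi_{N,2}[\Phi_N]f^{\otimes2}$. Hypothesis~\eqref{eq:conv-CN2-0} then yields
\[\int\Phi_N^Tf(T)^{\otimes N}-\int\Phi_N(0)f_\circ^{\otimes N}\to0.\]

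It remains to transfer this to $F_N$, which is where the weaker assumption~\eqref{eq:initial} enters. By Cauchy--Schwarz,
\[\Big|\int\Phi_N(0)(F_N^\circ-f_\circ^{\otimes N})\Big|\le \big\|F_N^\circ/f_\circ^{\otimes N}-1\big\|_{L^2(f_\circ^{\otimes N})}\,\|\Phi_N(0)-c\|_{L^2(f_\circ^{\otimes N})}\]
for any constant $c$. This is not small in general, so we restrict to exponential observables.

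Take $\Phi_N^T=\exp\big(\frac iN\sum_j\varphi(z_j)\big)$, or a real bounded variant built from cosines so that $\|\Phi_N^T\|\le1$, with $\varphi\in C_c^\infty$. This choice satisfies~\eqref{eq:conv-PhiNT-ass} only after centering: $\|\Phi_N^T-1\|_{L^2}\to0$ holds precisely when $\varphi$ is replaced by $\varphi-\int\varphi f(T)$ and the phase is scaled by $1/N$. More usefully, consider $\Phi_N^T=\exp\big(\frac{i}{N}\sum_j(\varphi(z_j)-\langle\varphi,f(T)\rangle)\big)$. The law of large numbers under $f^{\otimes N}$ gives~\eqref{eq:conv-PhiNT-ass}, and the previous step then gives $\|\Phi_N(0)-1\|_{L^2(f_\circ^{\otimes N})}\to0$. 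Indeed, by~\eqref{eq:conser-Linfty},
\[\|\Phi_N(0)-1\|_{L^2}^2\le 2-2\,\mathrm{Re}\int\Phi_N(0)f_\circ^{\otimes N},\]
and the last integral tends to $\int\Phi_N^Tf(T)^{\otimes N}\to1$. Choosing $c=1$ in the Cauchy--Schwarz bound and using~\eqref{eq:initial}, we obtain
\[\int F_N(T)\Phi_N^T-\int F_N^\circ\to0,\qquad\text{that is,}\qquad \E_{F_N(T)}\big[e^{i\langle\mu_N-f(T),\varphi\rangle}\big]\to1,\]
where $\mu_N$ denotes the empirical measure. Hence $\langle\mu_N,\varphi\rangle\to\langle f(T),\varphi\rangle$ in probability for each $\varphi$.

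Finally, the standard equivalence between convergence in law of the empirical measure to a deterministic limit and chaoticity of exchangeable marginals (Sznitman) gives~\eqref{eq:concl-MFL}. One first uses tensor test functions $\varphi_1\otimes\cdots\otimes\varphi_k$ and the bounded $O(k^2/N)$ error between $\E\prod_\ell\langle\mu_N,\varphi_\ell\rangle$ and $\int F_{N,k}\otimes\varphi_\ell$, and then concludes by density in $\Dc'$.

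The main obstacle is the step justifying $\|\Phi_N(0)-1\|_{L^2(f_\circ^{\otimes N})}\to0$, which is what the $L^\infty$ bound~\eqref{eq:conser-Linfty} plus the identity for the $f^{\otimes N}$ pairing must deliver. The rigorous derivation of the time-derivative identity for weak duality solutions is the other delicate point.
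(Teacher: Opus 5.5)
Your proposal is correct and follows the paper's proof essentially step by step: the same centered exponential observable, the same reduction of the $f^{\otimes N}$-pairing defect to $N\int K_f\,\Pi_{N,2}[\Phi_N]\,f^{\otimes2}$, the same use of $|\Phi_N|\le1$ to upgrade $\int\Phi_N(0)f_\circ^{\otimes N}\to1$ into $L^2(f_\circ^{\otimes N})$-convergence, and then Cauchy--Schwarz against~\eqref{eq:initial} followed by duality. The differences are cosmetic: the time derivative actually carries a plus sign and does not need oddness of $K$ (harmless given the absolute value in~\eqref{eq:conv-CN2-0}); $\Phi_N$ should be the particular dual solution in duality with $F_N$ supplied by~\cite[Appendix]{BDJ-24} rather than an arbitrary one; and your appeal to Sznitman's equivalence replaces the paper's equivalent explicit moment expansion.
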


\begin{proof}
Fix $T\in(0,T_0]$, $h\in C_c^\infty(\Dd)$, $\lambda\in\R$, and let
\begin{equation}\label{eq:cosine-terminal}
 \Phi_N^T(z_1,\ldots,z_N) :=\exp\bigg(\frac{i\lambda}N\sum_{j=1}^N\Big(h(z_j)-\int_\Dd hf(T)\Big)\bigg).
\end{equation}
A direct calculation yields
\begin{eqnarray*}
\|\Phi_N^T-1\|_{L^2(f(T)^{\otimes N})}
&\le&\bigg\|\frac{\lambda}N\sum_{j=1}^N\Big(h(z_j)-\int_\Dd hf(T)\Big)\bigg\|_{L^2(f(T)^{\otimes N})}\\
&=&\frac{\lambda}{\sqrt N}\bigg(\int_\Dd\Big(h-\int_\Dd hf(T)\Big)^2f(T)\bigg)^\frac12~\to~0,
\end{eqnarray*}
that is, \eqref{eq:conv-PhiNT-ass}.
Consider a global bounded weak solution $\Phi_N\in L^\infty([0,T]\times \Dd^N)$ of the dual Liouville equation~\eqref{eq:Liouville-dual} with final data $\Phi_N|_{t=T}=\Phi_N^T$ and with the conservation property~\eqref{eq:conser-Linfty}. Note that the equation ensures $\Phi_N\in C_{w^*}(0,T;L^\infty(\Dd^N))$.
Comparing the weak formulations of the equations for $\Phi_N$ and for $f^{\otimes N}$, as in~\cite[Proposition~4]{BDJ-24}, we find for all $0\le t\le T$,
\[\int_{\Dd^N}\Phi_N^Tf(T)^{\otimes N}-\int_{\Dd^N}\Phi_N(t)f(t)^{\otimes N}=\frac1{N-1}\sum_{i\ne j}^N\int_t^T\Big(\int_{\Dd^N}K_f(z_i,z_j)\Phi_Nf^{\otimes N}\Big),\]
where $K_f$ is defined in the statement. Since $\Phi_N$ is symmetric in its $N$ entries and since the cancellations $\int_\Dd K_f(z_1,z_2)f(z_j)dz_j=0$ for $j=1,2$ allow to replace $\Phi_N$ by $\Pi_{N,2}[\Phi_N]$, we get
\[\int_{\Dd^N}\Phi_N^Tf(T)^{\otimes N}-\int_{\Dd^N}\Phi_N(t)f(t)^{\otimes N}=N\int_t^T\Big(\int_{\Dd^N}K_f\Pi_{N,2}[\Phi_N]f^{\otimes N}\Big).\]
The right-hand side tends to $0$ by assumption~\eqref{eq:conv-CN2-0}, while $\int_{\Dd^N}\Phi_N^Tf(T)^{\otimes N}\to1$ by~\eqref{eq:conv-PhiNT-ass}. Hence, for all $0\le t\le T$,
\[\Pi_{N,0}[\Phi_N](t)=\int_{\Dd^N}\Phi_N(t)f(t)^{\otimes N}\to1.\]
Combined with~\eqref{eq:conser-Linfty}, this implies the strong convergence
\begin{eqnarray*}
\|\Phi_N(t)-1\|_{L^2(f(t)^{\otimes N})}^2&=&\|\Phi_N(t)\|_{L^2(f(t)^{\otimes N})}^2+1-2\Re\Pi_{N,0}[\Phi_N](t)\\
&\le&2\Re\big(1-\Pi_{N,0}[\Phi_N](t)\big)\to0.
\end{eqnarray*}
In particular, at $t=0$, by the weak chaos assumption~\eqref{eq:initial},
\begin{equation}\label{eq:conv-dual}
\Big|\int_{\Dd^N}\Phi_N(0)F_N^\circ-1\Big|\le\|\Phi_N(0)-1\|_{L^2(f_\circ^{\otimes N})}\Big(\int_{\Dd^N}\frac{|F_N^\circ|^2}{f_\circ^{\otimes N}}\Big)^\frac12\to0.
\end{equation}
Since $F_N$ is a weak duality solution of the Liouville equation in the sense of~\cite[Appendix]{BDJ-24}, we can choose the weak dual solution $\Phi_N$ to be in duality with $F_N$, that is,
\begin{equation}\label{eq:duality}
\int_{\Dd^N}\Phi_N^TF_N(T)=\int_{\Dd^N}\Phi_N(0)F_N^\circ,
\end{equation}
so that~\eqref{eq:conv-dual} entails
\[\int_{\Dd^N}\Phi_N^TF_N(T)\to1.\]
By the choice~\eqref{eq:cosine-terminal} of $\Phi_N^T$, with arbitrary $h\in C^\infty_c(\Dd)$ and $\lambda\in\R$, this easily implies for any~$k\ge1$,
\[\int_{\Dd^N}\bigg(\frac{1}N\sum_{j=1}^N\Big(h(z_j)-\int_\Dd hf(T)\Big)\bigg)^kF_N(T,z_1,\ldots,z_N)\,dz_1\ldots dz_N\to0.\]
Expanding the power in the left-hand side, this yields the conclusion~\eqref{eq:concl-MFL}.
\end{proof}

\subsection{A priori estimates}
Henceforth, we assume that $F_N,f$ are as in Lemma~\ref{lem:duality},
we fix $T\in (0,T_0]$ and a symmetric $\Phi_N^T\in L^\infty(\Dd^N)$ satisfying~\eqref{eq:conv-PhiNT-ass}, and we consider a global bounded weak solution $\Phi_N\in L^\infty([0,T]\times\Dd^N)$ of the dual Liouville equation~\eqref{eq:Liouville-dual} with final data $\Phi_N|_{t=T}=\Phi_N^T$ and with the conservation property~\eqref{eq:conser-Linfty}. We shall study the associated Hoeffding components, or dual correlations,
\[C_{N,m}:=\Pi_{N,m}[\Phi_N],\qquad 0\le m\le N,\]
and we aim to prove the convergence~\eqref{eq:conv-CN2-0}, that is,
\begin{equation}\label{eq:conv-CN2-0re}
N\int_0^T\Big|\int_{\Dd^2}K_f\,C_{N,2}\,f^{\otimes 2}\Big|\,\to\,0\quad\text{as $N\uparrow\infty$},
\end{equation}
which will then imply the mean-field limit by Lemma~\ref{lem:duality}.
Due to the orthogonality property~\eqref{eq:orthog} and to the conservation property~\eqref{eq:conser-Linfty}, we have
\[\sup_{[0,T]}\|C_{N,m}\|_{L^2(f^{\otimes m})}\,\le\,\binom{N}{m}^{-\frac12}\|\Phi_N^T\|_{L^\infty(\Dd^N)},\qquad 0\le m\le N.\]
Note in particular that $C_{N,2}=O(N^{-1})$, which is just not enough to prove~\eqref{eq:conv-CN2-0re} even if $K\in L^2_\loc(\R^d)$. In view of these a priori bounds, we define the rescaled correlations
\begin{equation}\label{eq:barCNn}
\bar C_{N,m}\,=\,\binom{N}{m}^\frac12C_{N,m},\qquad0\le m\le N.
\end{equation}
Since $\Phi_N$ is bounded in $L^\infty$, cf.~\eqref{eq:conser-Linfty}, we can interpolate and deduce the following more general bounds; see~\cite[Lemma~12]{BDJ-24}.

\begin{lem}[A priori estimates; see~\cite{BDJ-24}]\label{lem:apriori-L2}
For all $0\le k\le m\le N$ and $q\ge2$,
\begin{equation*}
\sup_{[0,T]}\|\bar C_{N,m}\|_{L^q(f^{\otimes k},L^2(f^{\otimes m-k}))}\,\le\,C^k\Big(\frac Nm\Big)^{k(\frac12-\frac1q)}.
\end{equation*}
\end{lem}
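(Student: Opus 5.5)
We prove Lemma~\ref{lem:apriori-L2} by interpolating between two endpoint bounds, $q=2$ and $q=\infty$, using the mixed norm
\[
\|G\|_{L^q(f^{\otimes k},L^2(f^{\otimes m-k}))}=\Big(\int_{\Dd^k}\Big(\int_{\Dd^{m-k}}|G|^2f^{\otimes m-k}\Big)^{q/2}f^{\otimes k}\Big)^{1/q}.
\]

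\emph{Endpoint $q=2$.} Here the mixed norm is just the $L^2(f^{\otimes m})$ norm. The a priori bound following from~\eqref{eq:orthog} and~\eqref{eq:conser-Linfty} gives $\|\bar C_{N,m}\|_{L^2(f^{\otimes m})}\le\|\Phi_N^T\|_{L^\infty}\le1$. This matches the claimed bound with exponent $k(\frac12-\frac12)=0$.

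\emph{Endpoint $q=\infty$.} We freeze $z_1,\dots,z_k$ and view $C_{N,m}$ as a function of the remaining $m-k$ variables. Expanding the projections $(1-\pi_f)$ in the first $k$ slots gives
\[
C_{N,m}(z_{[k]},\cdot)=\sum_{\tau\subset[k]}(-1)^{k-\sharp\tau}\,\Psi_\tau(z_\tau,\cdot),
\]
where $\Psi_\tau$ is obtained from $\Phi_N$ by integrating out the variables in $[k]\setminus\tau$ against $f$. Every $\Psi_\tau(z_\tau,\cdot)$ is a symmetric bounded function of the other $N-\sharp\tau$ variables with sup norm at most $1$. Its $(m-k)$-th Hoeffding component with respect to $f^{\otimes(N-\sharp\tau)}$ is then controlled by orthogonality~\eqref{eq:orthog}:
\[
\|\Pi_{m-k}[\Psi_\tau(z_\tau,\cdot)]\|_{L^2(f^{\otimes m-k})}\le\binom{N-\sharp\tau}{m-k}^{-1/2}.
\]
The triangle inequality over the $2^k$ choices of $\tau$ then yields, uniformly in $z_{[k]}$,
\[
\|\bar C_{N,m}(z_{[k]},\cdot)\|_{L^2(f^{\otimes m-k})}\le 2^k\binom{N}{m}^{1/2}\binom{N-k}{m-k}^{-1/2}.
\]
The binomial ratio is
\[
\binom{N}{m}\Big/\binom{N-k}{m-k}=\frac{N(N-1)\cdots(N-k+1)}{m(m-1)\cdots(m-k+1)}.
\]
Each factor satisfies $\frac{N-j}{m-j}\le\frac{N}{m-j}\le k\frac Nm$ for $j<k\le m$, so the ratio is at most $(C\,N/m)^k$ after adjusting $C$, with the factor $k^k$ absorbed into the $C^k$ convention. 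Since $z_{[k]}$ was arbitrary, this gives $\|\bar C_{N,m}\|_{L^\infty(f^{\otimes k},L^2)}\le C^k(N/m)^{k/2}$.

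\emph{Interpolation.} For $2\le q\le\infty$, Hölder's inequality in the outer variables gives
\[
\|G\|_{L^q(L^2)}\le\|G\|_{L^2(L^2)}^{2/q}\,\|G\|_{L^\infty(L^2)}^{1-2/q}.
\]
Inserting the two endpoint bounds produces exactly $C^k(N/m)^{k(\frac12-\frac1q)}$, with the constant uniform on $[0,T]$.

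\emph{Main obstacle.} The only delicate step is the $q=\infty$ endpoint. One must check that freezing $z_{[k]}$ commutes correctly with the Hoeffding projections in the other variables, so that the remaining Hoeffding components are genuinely those of a bounded symmetric function of $N-\sharp\tau$ variables. Once that structure is justified, the rest is the routine binomial bookkeeping above.
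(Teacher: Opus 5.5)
Your route is the one the paper has in mind. It gives no proof beyond ``interpolate'' and a reference to \cite[Lemma~12]{BDJ-24}, and the natural reading is exactly your interpolation. The $q=2$ endpoint is the orthogonality bound. The $q=\infty$ endpoint is a bound uniform in the frozen variables $z_{[k]}$: you expand $(1-\pi_f)^{\otimes k}$ and apply~\eqref{eq:orthog} to the bounded symmetric functions obtained from $\Phi_N(z_\tau,\cdot)$. That endpoint, the identification of each term as an $(m-k)$-th Hoeffding component, and the H\"older interpolation are all fine.

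There is a small bookkeeping slip. Once the variables in $[k]\setminus\tau$ are integrated out, $\Psi_\tau(z_\tau,\cdot)$ is a function of $N-k$ variables, not $N-\sharp\tau$. Either viewpoint gives the bound $\binom{N-k}{m-k}^{-1/2}$ that you actually use, so this is harmless.

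The step that fails as written is the final binomial estimate. Bounding each factor $\frac{N-j}{m-j}$ by $k\frac Nm$ gives $\binom{N}{m}\big/\binom{N-k}{m-k}\le k^k(N/m)^k$. The factor $k^k$, or $k^{k/2}$ after the square root, cannot be absorbed into $C^k$. The lemma is asserted for all $0\le k\le m\le N$ with $C$ independent of $k$, which is the whole point of writing $C^k$, and $k^{k/2}/C^k\to\infty$ for every fixed $C$.

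You need a sharper count. The identity $\binom{N}{m}\binom{m}{k}=\binom{N}{k}\binom{N-k}{m-k}$ gives $\binom{N}{m}\big/\binom{N-k}{m-k}=\binom{N}{k}\big/\binom{m}{k}$. Using $\binom{N}{k}\le N^k/k!\le(eN/k)^k$ and $\binom{m}{k}\ge(m/k)^k$ for $m\ge k$, this is at most $\big(\frac{eN}{k}\big)^k\big(\frac km\big)^k=\big(\frac{eN}{m}\big)^k$. Your $q=\infty$ endpoint then becomes $\|\bar C_{N,m}\|_{L^\infty(f^{\otimes k},L^2(f^{\otimes m-k}))}\le(2\sqrt e)^k(N/m)^{k/2}$, and the rest of your argument goes through unchanged.

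Your cruder bound would in fact suffice for every use of the lemma in the paper, since only $k\le2$ appears there. It does not prove the statement as written, though.
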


\subsection{Hierarchy for dual correlations}
As the above a priori bounds do not suffice for the desired convergence~\eqref{eq:conv-CN2-0re}, we further study the dynamics of dual correlations.
To this end, we consider the BBGKY-type hierarchy that they satisfy, which can be obtained directly from the dual Liouville equation~\eqref{eq:Liouville-dual} for~$\Phi_N$; see~\cite[Lemma~11]{BDJ-24}. We state it here for the rescaled dual correlations~\eqref{eq:barCNn}.

We emphasize that most of the collision operators appearing in the hierarchy contain velocity derivatives, and their iteration would thus ordinarily generate an uncontrollable cascade of derivative losses, which is the usual obstruction for classical kinetic hierarchies. Yet, our central observation in~\cite{BDJ-24} was that the fluctuation scaling~\eqref{eq:barCNn} exactly compensates for derivative losses as~$N\uparrow\infty$: after rescaling, velocity derivatives are confined to the remainder term $R_{N,m}$ in~\eqref{eq:lemhier} below and are formally $O(N^{-1/2})$.

\begin{lem}[Hierarchy for dual correlations; see~\cite{BDJ-24}]\label{lem:hier}
For all $0\le m\le N$, we have in the weak sense on $[0,T]$,
\begin{equation}\label{eq:lemhier}
(\partial_t+L_f^m)\bar C_{N,m}
=R_{N,m}
+\sqrt{(m+1)(m+2)}\Big(\frac{(N-m)(N-m-1)}{(N-1)^2}\Big)^\frac12\,K_f[\bar C_{N,m+2}],
\end{equation}
where $L_f^m$ stands for the following linearized mean-field operator on the $m$-particle space,
\begin{eqnarray*}
L_f^m&:=&\sum_{j=1}^m\Id^{\otimes j-1}\otimes L_f\otimes\Id^{m-j},\\
L_fh&:=& v\cdot\nabla_{x}h+\alpha\triangle_{v}h+(K\ast f)(x)\cdot\nabla_{v}h-\int_\Dd K_f(z_*,z)h(z_*)f(z_*)\,dz_*,
\end{eqnarray*}
where $K_f[\bar C_{N,m+2}]$ is defined as
\begin{equation}\label{eq:def-VC}
K_f[\bar C_{N,m+2}]\,=\,\int_{\Dd^2}K_f(z_*,z_*')\,\bar C_{N,m+2}(\cdot,z_*,z_*')\,f(z_*)f(z_*')\,dz_*dz_*',
\end{equation}
where the remainder term $R_{N,m}$ is given explicitly by
\begin{multline*}
R_{N,m}\,=\,\Big(\frac{N-m+1}{m(N-1)^2}\Big)^\frac12\sum_{i\ne j}^m(K\ast f)(x_i)\cdot\nabla_{v_i}\bar C_{N,[m]\setminus\{j\}}\\
-\Big(\frac{N-m+1}{m(N-1)^2}\Big)^\frac12\sum_{i\ne j}^m\int_\Dd K_f(z_{*},z_j)\bar C_{N,[m]\cup\{*\}\setminus\{i,j\}}\,f(z_{*})\,dz_{*}\\
-\Big(\frac{N-m+1}{m(N-1)^2}\Big)^\frac12\sum_{i\ne j}^mK(x_i-x_j)\cdot\nabla_{v_i}\bar C_{N,[m]\setminus\{j\}}\\
+\frac{m-1}{N-1}\sum_{i=1}^m(K\ast f)(x_i)\cdot\nabla_{v_i}\bar C_{N,m}
-\frac1{N-1}\sum_{i\ne j}^mK(x_i-x_j)\cdot\nabla_{v_i}\bar C_{N,m}\\
-\frac{m-1}{N-1}\sum_{j=1}^m\int_\Dd K_f(z_*,z_j)\bar C_{N,[m]\cup\{*\}\setminus\{j\}}\,f(z_*)\,dz_*\\
+\frac1{N-1}\sum_{i\ne j}^m\int_\Dd K(x_i-x_*)\cdot\nabla_{v_i}\bar C_{N,[m]\cup\{*\}\setminus\{j\}}\,f(z_*)\,dz_*\\
-\frac1{N-1}\sum_{i\ne j}^m\int_\Dd K_f(z_*,z_j)\,\bar C_{N,[m]\cup\{*\}\setminus\{i\}}\,f(z_*)\,dz_*\\
+\frac1{N-1}\sum_{i\ne j}^m\int_{\Dd^2}K_f(z_*,z_*')\,\bar C_{N,[m]\cup\{*,*'\}\setminus\{i,j\}}\,f(z_*)f(z_*')\,dz_*dz_*'\\
+\Big(\frac{(m+1)(N-m)}{(N-1)^2}\Big)^\frac12\sum_{j=1}^m\int_\Dd K_f(z_*,z_j)\,\bar C_{N,[m]\cup\{*\}}\,f(z_*)\,dz_*\\
-\Big(\frac{(m+1)(N-m)}{(N-1)^2}\Big)^\frac12\sum_{i=1}^m\int_\Dd K(x_i-x_*)\cdot\nabla_{v_i}\bar C_{N,[m]\cup\{*\}}\,f(z_*)\,dz_*\\
-2\Big(\frac{(m+1)(N-m)}{(N-1)^2}\Big)^\frac12\sum_{i=1}^m\int_{\Dd^2}K_f(z_*,z_*')\,\bar C_{N,[m]\cup\{*,*'\}\setminus\{i\}}\,f(z_*)f(z_*')\,dz_*dz_*',
\end{multline*}
and where for notational convenience we have set $\bar C_{N,m}\equiv0$ for $m<0$ and $m>N$.
\end{lem}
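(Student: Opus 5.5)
The plan is to derive the hierarchy by projecting the dual Liouville equation~\eqref{eq:Liouville-dual} onto Hoeffding components, i.e.\ applying $\Pi_{N,m}$ to every term, and then rescaling by $\binom Nm^{1/2}$ as in~\eqref{eq:barCNn}. The first step is to rewrite~\eqref{eq:Liouville-dual} as an equation for $\Phi_N$ in which the mean-field flow appears explicitly. We add and subtract $(K\ast f)(x_i)\cdot\nabla_{v_i}$, which gives
\[
\partial_t\Phi_N+\sum_i\big(v_i\cdot\nabla_{x_i}+\alpha\triangle_{v_i}+(K\ast f)(x_i)\cdot\nabla_{v_i}\big)\Phi_N+\frac1{N-1}\sum_{i\ne j}\big(K(x_i-x_j)-(K\ast f)(x_i)\big)\cdot\nabla_{v_i}\Phi_N=0.
\]

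The second step is to compute how the projections act on these terms. Since $f$ solves~\eqref{eq:Vlasov}, the one-particle operator $v\cdot\nabla_x+\alpha\triangle_v+(K\ast f)\cdot\nabla_v$ is the formal adjoint, with respect to $f\,dz$, of the Vlasov operator up to the time derivative of $f$. Consequently $\partial_t+\sum_i(\ldots)$ commutes with the projections $\pi_{f(t)}$ and $1-\pi_{f(t)}$. Concretely, $\frac{d}{dt}\int hf=\int(\partial_t+L_0)h\,f$ with $L_0:=v\cdot\nabla_x+\alpha\triangle_v+(K\ast f)\cdot\nabla_v$, so this part contributes exactly $(\partial_t+\sum_{j\le m}L_0^{(j)})C_{N,m}$. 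The interaction term is handled by integration by parts in $v_i$ against $f(z_i)$ whenever $i$ is projected by $\pi_f$. This produces the kernel $K_f$ of~\eqref{eq:def-Vf}, via $\int \nabla_{v}h\,f=-\int h\,\nabla_v\log f\,f$. When $j$ is projected as well, it produces $K\ast f$ cancellations. I will then sort the terms of $\Pi_{N,m}$ applied to $\frac{1}{N-1}\sum_{i\ne j}(\ldots)$ according to whether $i$ and $j$ belong to the retained set $[m]$ or to the integrated-out variables.

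This gives the following cases. If $i,j\notin[m]$ (after symmetrization), integrating out both variables gives $K_f$ tested against $C_{N,m+2}$, and $C_{N,m+1}$ or $C_{N,m}$ when one of $z_i,z_j$ is not a fluctuation variable. The pure $(m+2)$ contribution carries the count $(N-m)(N-m-1)/(N-1)$. If exactly one index lies in $[m]$, we obtain the $K_f[\cdot]$-type terms with one starred variable and the $\nabla_{v_i}$ terms integrated against $f(z_*)$. This includes the linear term $-\int K_f(z_*,z)h(z_*)f(z_*)$ in $L_f$, which comes from $i=*$, $j\in[m]$ applied to $C_{N,m-1}$-type pieces. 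It needs care: it arises from $K\ast f$ subtraction combined with the projection of $z_j$. If both indices lie in $[m]$, the terms are $K(x_i-x_j)\cdot\nabla_{v_i}$ acting on $C_{N,m}$ and on $C_{N,[m]\setminus\{j\}}$. The latter arises because $(1-\pi_f)$ in $z_j$ applied to a function of $z_j$ involving $K(x_i-x_j)$ does not vanish. Throughout, I will use the expansion $\Phi_N=\sum_\sigma C_{N,\sharp\sigma}(z_\sigma)$ and apply $\Pi_{N,m}$ directly to each product, writing $1-\pi_f$ on a variable not present as zero.

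The final step is bookkeeping. We multiply by $\binom Nm^{1/2}$ and express $\binom Nm^{1/2}C_{N,m\pm k}$ through $\bar C$ using ratios such as
\[
\binom Nm^{1/2}\binom N{m+2}^{-1/2}=\Big(\frac{(m+1)(m+2)}{(N-m)(N-m-1)}\Big)^{1/2}.
\]
Combined with the prefactor $(N-m)(N-m-1)/(N-1)$, this gives the stated leading coefficient, and the analogous ratios give the prefactors in $R_{N,m}$. The main obstacle is this combinatorial sorting. We must correctly count multiplicities, for instance the factor $2$ in the last line of $R_{N,m}$ coming from $i$ or $j$ being the starred variable. We must also identify which terms combine into $L_f^m$ versus the remainder, keeping track of the extra pieces with $\frac{m-1}{N-1}$ arising from the $K\ast f$ term being counted $N-1$ versus $N-m$ times. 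Weak-sense justification follows since all terms make sense for bounded $\Phi_N$ tested against smooth functions, with $K\in L^1_\loc$ and $\nabla_v\log f$ assumptions.
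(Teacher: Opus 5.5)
Your strategy is exactly the direct computation the paper defers to in \cite[Lemma~11]{BDJ-24}, so it is correct and follows the same route: make the mean-field transport explicit, use the Vlasov equation to commute $\partial_t+L_0$ with the time-dependent projections, insert $\Phi_N=\sum_\sigma C_{N,\sharp\sigma}(z_\sigma)$, sort the projected interaction terms by how many of $i,j$ lie in $[m]$, and rescale. There are two bookkeeping slips to fix when you carry it out. First, for $i,j\notin[m]$ only the $C_{N,m+2}$ term survives: if $j\notin\sigma$ then $\pi_f$ in $z_j$ annihilates $K(x_i-x_j)-(K\ast f)(x_i)$, and if $i\notin\sigma$ then $\nabla_{v_i}$ annihilates $C_{N,\sharp\sigma}(z_\sigma)$; the $C_{N,m+1}$-terms of $R_{N,m}$ come instead from exactly one index in $[m]$. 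Second, the linear $K_f$-term of $L_f$ comes from the $C_{N,m}$-pieces indexed by $([m]\setminus\{j\})\cup\{i\}$ with $i\notin[m]$, not from $C_{N,m-1}$-pieces. Their count $N-m$ gives the coefficient $\frac{N-m}{N-1}=1-\frac{m-1}{N-1}$, whereas $C_{N,m-1}$-pieces would pick up an extra factor $\sqrt{(N-m+1)/m}$ under rescaling.
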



\section{Diffusive case: hierarchical estimates via hypoellipticity}

In order to obtain improved a priori estimates on dual correlations, in the diffusive case~$\alpha>0$, we apply hypoelliptic regularity to the hierarchy of equations derived in Lemma~\ref{lem:hier}. More precisely, we appeal to the following classical hypoelliptic estimate. This essentially follows from~\cite{Bouchut-02}, but we include a short proof in Appendix~\ref{app:kinetic-reg} for completeness, based on a direct Fourier calculation. We emphasize that the multiplicative constant is independent of the phase-space dimension $2dm$.

\begin{lem}[Hypoelliptic regularity]\label{lem:hypoell}
Given $\alpha>0$ and $m\ge1$, let $g_m,e_m$ satisfy in the weak sense on~$[0,T]\times\Dd^m$,
\[\partial_tg_m+\sum_{i=1}^m(v_i\cdot\nabla_{x_i}-\alpha\triangle_{v_i})g_m=e_m.\]
Then, for all $0\le r\le\frac13$,
\begin{equation}
\int_0^T\||\nabla_{x_{[m]}}|^{\frac{1}3-r} g_m\|_{L^2}^2
\le C(T+1)\sup_{[0,T]}\|g_m\|_{L^2}^2+C\int_0^T\|\langle\nabla_{x_{[m]}}\rangle^{-r}\langle\nabla_{v_{[m]}}\rangle^{-1}e_m\|_{L^2}^2,
\end{equation}
for some constant $C$ only depending on $\alpha$.
\end{lem}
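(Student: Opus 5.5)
The plan is to prove Lemma~\ref{lem:hypoell} by a direct Fourier calculation in the $x$ variables, with an energy/commutator (Hörmander-type) argument in the $v$ variables, done one frequency at a time. Since the statement is linear and the bound is additive in the right quantities, we first reduce to a uniform one-particle-type estimate. Take the Fourier transform in $x_{[m]}$ only, with dual variables $k=(k_1,\ldots,k_m)$. The equation becomes
\[\partial_t\hat g+i\,k\cdot v\,\hat g-\alpha\triangle_v\hat g=\hat e,\]
where $k\cdot v=\sum_ik_i\cdot v_i$ is a single linear form in $v\in\R^{dm}$. By rotation in $v$-space we align $v$ so that $k\cdot v=|k|\,w$ with $w$ a scalar coordinate. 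The Laplacian is rotation invariant, so for fixed $k$ the problem reduces to a one-dimensional transport-diffusion in $w$, times harmless variables. This reduction is why the constant does not depend on $2dm$. It then suffices to show, for each fixed $k$,
\[\int_0^T|k|^{\frac23-2r}\|\hat g(k)\|_{L^2_v}^2\le C(T+1)\sup_t\|\hat g(k)\|_{L^2_v}^2+C\int_0^T\langle k\rangle^{-2r}\|\langle\nabla_v\rangle^{-1}\hat e(k)\|_{L^2_v}^2,\]
and then integrate in $k$, using Plancherel. The $\langle\nabla_{v}\rangle^{-1}$ acts in all $v$-directions, but only the $w$-direction matters, and $\langle\xi\rangle^{-1}\ge\langle\xi_w\rangle^{-1}$ is in the good direction.

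For fixed $k$ with $|k|\ge1$, we use the classical averaging/hypoelliptic decomposition. Split the estimate for $\|\hat g\|$ into the region $|w|\le\lambda$ and the region $|w|>\lambda$, with $\lambda=|k|^{-1/3}$. The steps are as follows.
\begin{enumerate}[(i)]
\item The energy identity (multiply by $\bar{\hat g}$ and take the real part) gives
\[\alpha\int_0^T\|\partial_w\hat g\|^2\le\sup\|\hat g\|^2+\int_0^T|\langle\hat e,\hat g\rangle|\le\sup\|\hat g\|^2+\tfrac\alpha2\int\|\langle\partial_w\rangle\hat g\|^2+C\int\|\langle\partial_w\rangle^{-1}\hat e\|^2.\]
The term $\int\|\hat g\|^2$ produced here is bounded by $T\sup\|\hat g\|^2$, which explains the factor $T+1$.
\item Multiply the equation by $\frac{w}{\langle\lambda^{-1}w\rangle}$-type multipliers. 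A simpler route is the time-integrated identity
\[\mathrm{Re}\int\! i|k|w\,\hat g\,\overline{\chi(w/\lambda)\,\mathrm{sgn}(w)\,\hat g}.\]
This controls $|k|\int_{|w|>\lambda}|w|\,|\hat g|^2$, hence $|k|\lambda\int_{|w|>\lambda}|\hat g|^2$. The price is boundary-in-time terms, which are bounded by $\sup\|\hat g\|^2$; diffusion terms involving $\lambda^{-1}\|\partial_w\hat g\|\,\|\hat g\|$; and source terms $\|\langle\partial_w\rangle^{-1}\hat e\|\,\|\langle\partial_w\rangle(\text{multiplier}\cdot\hat g)\|$.
\item On $|w|\le\lambda$, use the one-dimensional Sobolev/Poincaré-type bound
\[\int_{|w|\le\lambda}|\hat g|^2\lesssim\lambda\|\hat g\|_{L^\infty_w}^2\lesssim\lambda\|\hat g\|\,\|\partial_w\hat g\|,\]
handling the transverse variables via Fubini.
\end{enumerate}
Balancing with $\lambda=|k|^{-1/3}$ yields $|k|^{2/3}\int_0^T\|\hat g\|^2$ bounded by the energy quantities. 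This gives the case $r=0$, with the source measured in $\langle\partial_v\rangle^{-1}$.

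For $0<r\le\frac13$, we interpolate, or simply run the same argument. We weight by $|k|^{-2r}$ and absorb the factor by Young's inequality differently: write
\[|k|^{\frac23-2r}\|\hat g\|^2\le\big(|k|^{\frac23}\|\hat g\|^2\big)^{1-3r}\,\|\hat g\|^{6r},\]
then bound $\|\hat g\|\le\sup\|\hat g\|$. Alternatively, apply the $r=0$ estimate with the source split so that its high-$|k|$ part is divided by $|k|^{r}$. Low frequencies $|k|\le1$ are trivial, since there $|k|^{\frac23-2r}\|\hat g\|^2\le\sup\|\hat g\|^2$ integrates to at most $T\sup\|\hat g\|^2$.

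The main obstacle is step (ii) together with the source term. We must check that the commutator of the multiplier with $\triangle_v$ and the pairing with $\hat e$ in the negative norm $\langle\partial_w\rangle^{-1}$ only produce terms bounded by $\lambda^{-1}$ times energy quantities, uniformly in the transverse dimensions. A cleaner fallback is to use the explicit Fourier transform in $w$ as well, with characteristics $\xi(s)=\xi+s|k|$ for the resulting first-order equation in $(t,\xi)$. This gives an explicit Duhamel formula, and the bound then follows from the elementary estimate
\[\int_0^T e^{-\alpha\int_0^s|\xi+\tau|k||^2d\tau}\,ds\lesssim|k|^{-2/3}.\]
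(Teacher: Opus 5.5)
Your main route fails at step (ii), which you rightly flag as the crux. For any real multiplication operator $\phi(w)$ one has $\Re\big(i|k|w\,\hat g\,\overline{\phi(w)\hat g}\big)=0$ pointwise. So the identity you write is identically zero and gives no control of $|k|\int_{|w|>\lambda}|w||\hat g|^2$.

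If you instead pair with $i\phi(w)\hat g$ (i.e.\ take imaginary parts), the transport term does produce $|k|\int|w|\chi|\hat g|^2$. But the time-derivative term becomes $\int_0^T\!\int\phi\,\Im(\partial_t\hat g\,\overline{\hat g})$, which is not a total derivative and is not bounded by $\sup_t\|\hat g\|^2$: for $\hat g=e^{-i\omega t}\psi$ it equals $-\omega T\int\phi|\psi|^2$. This is structural, not technical. The time frequency of $\hat g$ is what locates the resonant set $\{|k|w\approx\omega\}$, so a cut in $|w|$ alone, pointwise in time, cannot capture the mechanism. A working multiplier must fail to commute with multiplication by $w$, i.e.\ act in the velocity-Fourier variable (like the multiplier $M(\xi,\eta)$ the paper uses for the averaging lemma), or one must work with the explicit solution. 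Your step (iii) is fine, by absorption.

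Your fallback---Fourier in $(x,v)$ and the explicit Duhamel formula along $\eta\mapsto\eta+t\xi$---is the paper's route. However, the estimate you quote only handles the initial-data term, which is the paper's Step 1 via $\int_0^t|\zeta-s\xi|^2ds\ge\frac{t^3}{12}|\xi|^2$. The real difficulty is the source, which is only controlled in $\langle\nabla_{v}\rangle^{-1}L^2$. Writing $\hat e=\langle\eta\rangle\hat E$ and $\zeta=\eta+t\xi$, the Duhamel kernel is $|\xi|^{1/3}\langle\zeta-s\xi\rangle e^{-\alpha\int_s^t|\zeta-u\xi|^2du}$. The factor $\langle\zeta-s\xi\rangle$, of size up to $|\eta|+(t-s)|\xi|$, must be paid for by the damping.

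The paper does this by Schur's test, showing $\sup_s\int_s^TK\,dt+\sup_t\int_0^tK\,ds\lesssim1$ uniformly in $|\xi|>1$. It uses $\int_s^t|\zeta-u\xi|^2du=(t-s)|\zeta_{t,s}|^2+\frac1{12}(t-s)^3|\xi|^2$ and $|\zeta_{t,s}|e^{-\alpha(t-s)|\zeta_{t,s}|^2}\lesssim(t-s)^{-1/2}$, which produce exactly the $|\xi|^{-1/3}$ needed to offset $|\xi|^{1/3}$. Without this step your argument only covers $L^2$ sources.

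This computation uses the full vector $\zeta_{t,s}$, transverse components included. Your reduction to the $w$-direction rests on $\langle\xi\rangle^{-1}\ge\langle\xi_w\rangle^{-1}$ (in your notation, $\xi$ the velocity frequency), and that inequality is backwards. The lemma's source norm is the weaker one, so transverse derivatives in $e$ must be absorbed by transverse dissipation.

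Two smaller points. First, per-$k$ bounds in $\sup_t\|\hat g(t,k)\|^2$ do not integrate in $k$ to $\sup_t\|g_m\|_{L^2}^2$; keep $\|\hat g(0,k)\|^2$ and $\int_0^T\|\hat g(k)\|^2$ instead. Second, for $r>0$, interpolating the left-hand side does not weaken the source norm. The correct move is your alternative: multiply the fixed-$k$ estimate by $\langle k\rangle^{-2r}$, which is the paper's reduction via $\langle\nabla_{x_{[m]}}\rangle^{-r}g_m$.
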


\begin{rem}\label{lem:hypoell-re}
For later purposes, we note that, by a straightforward inspection of the proof in Appendix~\ref{app:kinetic-reg}, the above can be strengthened as follows in the case $e_m=e_m^0+\Div_v(e_m^1)$: for all $0\le r\le\frac13$ and~$0\le s\le\frac23$,
\begin{multline*}
\int_0^T\||\nabla_{x_{[m]}}|^{(\frac23-s)\wedge(\frac13-r)}g_m\|_{L^2}^2\\[-2mm]
\le C(T+1)\sup_{[0,T]}\|g_m\|_{L^2}^2+C\int_0^T\|\langle\nabla_{x_{[m]}}\rangle^{-s}e_m^0\|_{L^2}^2+C\int_0^T\|\langle\nabla_{x_{[m]}}\rangle^{-r}e_m^1\|_{L^2}^2.
\end{multline*}
\end{rem}

Applying this regularity result to the hierarchy~\eqref{eq:lemhier} for dual correlations, we establish the following uniform-in-$N$ a priori estimate. Note that we use the weight $f^{1-\delta}$ instead of~$\sqrt f$, with $1-\delta\ge\frac12$, to avoid boundedness requirements on $\nabla_v\log f$ itself, in line with the weighted regularity requirement~\eqref{eq:reg-req-2}.

\begin{prop}\label{prop:bnd-diff}
Let $\alpha>0$, let
\[K\in L^{2d/(d+2\gamma)}_\loc(\R^d),\qquad\sup_{|x|\ge1}|K(x)|<\infty,\]
for some $0\le\gamma<\frac16$. Also assume that the mean-field solution $f$ satisfies the regularity conditions~\eqref{eq:reg-req-0}, \eqref{eq:reg-req-0b}, and~\eqref{eq:reg-req-2} for some $\delta\in(0,\frac12]$.
Then, for all~$0\le m\le N$,
\[\int_0^T\big\||\nabla_{x_1}|^{\gamma}((f^{\otimes m})^{1-\delta}\bar C_{N,m})\big\|_{L^2}^2\,\le\,C(T+1)(m+1)^c,\]
for some constant $c$ only depending on $\gamma$ and some constant $C$ further depending on~$\alpha,\delta$ and on assumptions on $K,f$.
\end{prop}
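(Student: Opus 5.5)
We work with the weighted correlation $g_m:=(f^{\otimes m})^{1-\delta}\bar C_{N,m}$. The plan is to derive its kinetic equation from the hierarchy of Lemma~\ref{lem:hier} and then apply the hypoelliptic estimate of Lemma~\ref{lem:hypoell}, in the refined form of Remark~\ref{lem:hypoell-re}, with $\frac13-r=\gamma$. Since $\gamma<\frac16$, this leaves room $r>\frac16$, and also $s$ with $\frac23-s\ge\gamma$. Only the $x_1$-derivative is needed, but controlling $|\nabla_{x_{[m]}}|^\gamma$ controls it.

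\textbf{Step 1: the equation for $g_m$.} Recall that $f$ solves the forward Vlasov equation, while $\bar C_{N,m}$ solves a backward equation involving $v\cdot\nabla_x+\alpha\triangle_v+(K\ast f)\cdot\nabla_v$. Reversing time $t\mapsto T-t$, the transport part acting on $g_m$ becomes $\partial_t+\sum_i(v_i\cdot\nabla_{x_i}-\alpha\triangle_{v_i})$, up to a sign convention. This leaves three kinds of terms.
\begin{enumerate}[(i)]
\item Commutator terms from the weight. These include $\alpha(1-\delta)$-multiples of $\nabla_{v}\log f\cdot\nabla_v g_m$, $(|\nabla_v\log f|^2+\triangle_v\log f)g_m$, and $(K\ast f)\cdot\nabla_v g_m$. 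The first-order terms are rewritten in divergence form $\Div_v(\cdot)$ plus zeroth-order terms. They are controlled by~\eqref{eq:reg-req-2} and by the $W^{1,\infty}$ bound on $K\ast f$, using that the weights $f^{(\frac12-\delta)\wedge\frac\delta2}$ in~\eqref{eq:reg-req-2} absorb the surplus powers of $f$.
\item The nonlocal part of $L_f^m$, namely $\int K_f(z_*,z_j)\bar C f(z_*)$. It is bounded via Cauchy--Schwarz, by $\|K\|_{L^2}$ on bounded sets, by the a priori $L^2(f^{\otimes m})$ bounds of Lemma~\ref{lem:apriori-L2}, and by the weighted $L^\infty_xL^2_v$ bounds on $f^\delta\nabla_v\log f$.
\item The source $K_f[\bar C_{N,m+2}]$ and the remainder $R_{N,m}$.
\end{enumerate}

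\textbf{Step 2: the energy input.} The quantity $\sup_t\|g_m\|_{L^2}^2$ is bounded by Lemma~\ref{lem:apriori-L2}: since $f\in L^\infty$ and $1-\delta\ge\frac12$, we have $\|g_m\|_{L^2}\lesssim\|\bar C_{N,m}\|_{L^2(f^{\otimes m})}\le C$. We also need $\int_0^T\|\nabla_v g_m\|^2$, which comes from the dissipation in the $L^2$ energy estimate. Indeed, weighted energy for the dual Liouville equation gives $\int_0^T\sum_i\|\nabla_{v_i}\bar C\|^2_{L^2(f^{\otimes m})}\lesssim m$ times the bounds. Here the lower bound~\eqref{eq:reg-req-2b} is exactly what makes the weighted zeroth-order term $\triangle_v\log f+\delta|\nabla_v\log f|^2$ harmless. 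This dissipation lets us place the velocity-derivative terms of $R_{N,m}$ in the $e^1_m$ slot with $r=0$.

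\textbf{Step 3: the singular terms.} These are the main obstacle: the terms involving $K(x_i-x_j)$ and $K(x_i-x_*)$, where $K\notin L^2$. For these we use the $\langle\nabla_x\rangle^{-r}$ and $\langle\nabla_x\rangle^{-s}$ smoothing in Remark~\ref{lem:hypoell-re}. By duality and Sobolev embedding, multiplication by $K\in L^{2d/(d+2\gamma)}$ maps $H^{\gamma}_x$ into $H^{-r}_x$, provided $r\ge\gamma$, roughly, since $K\in H^{-\gamma}$. Thus the right-hand side involves $\int_0^T\||\nabla_x|^\gamma g_{m'}\|^2$ for neighbouring levels $m'\in\{m-1,m,m+1,m+2\}$, multiplied by prefactors. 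These prefactors are $O(N^{-1/2})$ or $O(m/N)$ for the $R_{N,m}$ terms, and $O(m)$ for the source term.

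This produces a closed hierarchical inequality
\[A_m\le C(T+1)\,m^{c}+C m^{c}\max_{|m'-m|\le 2}\big(\varepsilon A_{m'}+\text{l.o.t.}\big),\qquad A_m:=\int_0^T\||\nabla_{x}|^{\gamma}g_m\|_{L^2}^2.\]
For the top-level source term $K_f[\bar C_{N,m+2}]$ we avoid this loss. The two integrated variables carry no derivative requirement, because the $f^{\otimes2}$-integration together with the $L^q$ interpolation of Lemma~\ref{lem:apriori-L2} suffices: we take $q$ large, and the $N$-growth is compensated by the prefactor $\sqrt{(N-m)/N}$. This step may instead require interpolating between the regularity $|\nabla_x|^\gamma$ and the $L^\infty$ bound on $\Phi_N$.

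The strict margin $\gamma<\frac16$ provides the slack needed to absorb the mismatch between $r$ and $\gamma$. Combining the three steps should yield the polynomial bound $C(T+1)(m+1)^c$. The delicate point, which I would check first, is that the $K(x_i-x_j)$ terms inside $R_{N,m}$ can indeed be placed in $H^{-r}$ with constants polynomial in $m$ and without feeding back a derivative cascade.
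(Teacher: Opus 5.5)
There is a genuine gap: you have inverted which terms need the kinetic regularity and which can be closed by the a priori bounds of Lemma~\ref{lem:apriori-L2}.

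\textbf{The $O(1)$-prefactor terms need regularity.} The source $K_f[\bar C_{N,m+2}]$ and the nonlocal part $\int_\Dd K_f(z_*,z_i)\bar C_{N,m}(z_{[m]\setminus i},z_*)f(z_*)\,dz_*$ of $L_f^m$ carry prefactors of order $m$ that do not decay in $N$. Indeed, $((N-m)(N-m-1))^{1/2}/(N-1)\to1$, so it compensates nothing. On the other hand, pairing $K\in L^{2d/(d+2\gamma)}$ with the correlation forces the exponent $q=2d/(d-2\gamma)$ in Lemma~\ref{lem:apriori-L2}, whose bound grows like $N^{2\gamma/d}$ (resp.\ $N^{\gamma/d}$). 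Taking $q$ larger only makes this worse. Moreover, the quantity $\|K\|_{L^2(B)}$ you invoke in (ii) is infinite in the very case of interest (2D Coulomb). These two terms are exactly where the $H^\gamma$-regularity must be fed back, and it can be. Since $K$ is \emph{integrated} against the correlation, H\"older's inequality and $H^\gamma\subset L^{2d/(d-2\gamma)}$ in the integrated variable (by exchangeability, $x_1$) bound them in plain $L^2$ by $(m+1)\|\langle\nabla_{x_1}\rangle^\gamma g_{m+2}\|_{L^2}+m\|\langle\nabla_{x_1}\rangle^\gamma g_m\|_{L^2}$. No negative Sobolev norm is needed there.

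\textbf{The remainder needs no regularity, and your multiplier claim is false.} Multiplication by $K\in L^{2d/(d+2\gamma)}$ does not map $H^\gamma_x$ into $H^{-r}_x$ for small $r$: the product with an $L^{2d/(d-2\gamma)}$ function is only in $L^1$. For instance, in 2D with $K=x/|x|^2$ and a localized $g=|x|^{-a}$, one has $g\in H^\gamma$ for $a<1-\gamma$, but $Kg\in H^{-r}$ only if $a<r$. In any case you control regularity in one particle variable at a time, while $K(x_i-x_j)$ couples two.

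The correct treatment of $R_{N,m}$ uses only a priori bounds. The factor multiplying $K(x_1-x_2)$ is either independent of $z_1$ (as for $\bar C_{N,[2,m]}$), or controlled by $\|\bar C_{N,m}\|_{L^\infty(\Dd^2;L^2(f^{\otimes m-2}))}\lesssim N/m$. The latter is the $q=\infty$ case of Lemma~\ref{lem:apriori-L2}, coming from $\|\Phi_N\|_{L^\infty}\le1$. The product then lies in $L^{2d/(d+2\gamma)}_{x_1}\subset H^{-\gamma}_{x_1}$, and the prefactors $N^{-1/2}$, $N^{-1}$ absorb the growth in $N$. Velocity derivatives are commuted with the weight and absorbed by the $\langle\nabla_V\rangle^{-1}$ already built into Lemma~\ref{lem:hypoell}. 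This gives $\|\langle\nabla_X\rangle^{-r}\langle\nabla_V\rangle^{-1}A_m^1\|_{L^2}\lesssim m^2$ for all $r\ge\gamma$, with no feedback, so the hierarchical inequality couples only levels $m$ and $m+2$.

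Consequently your Step~2 dissipation bound is unnecessary. It is also unjustified: it does not follow in any evident way from the weighted energy identity for $\Phi_N$ when $K\notin L^2_\loc$, and $\nabla_v$ does not commute with the Hoeffding projections.

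\textbf{The closing mechanism.} Choosing $\frac13-r=\gamma$ leaves both sides at the same regularity with $O(m)$ constants, so the $\varepsilon$ in your hierarchical inequality has no source. You need $r\ge\gamma$ (for the remainder) and $s:=\frac13-r>\gamma$; this is exactly where $\gamma<\frac16$ enters. Then:
\begin{enumerate}[(a)]
\item interpolate the right-hand $H^\gamma$-norms between the $H^s$-norm and the $L^2$ a priori bound;
\item use Young's inequality to absorb the level-$m$ term into the left-hand side, leaving a factor $\frac12$ in front of the level-$(m+2)$ term;
\item iterate upward in $m$, which terminates since $\bar C_{N,m}=0$ for $m>N$ and yields the polynomial bound.
\end{enumerate}
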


\begin{proof}
Smuggling the weight $(f^{\otimes m})^{1-\delta}$, with $\delta$ as in the regularity condition~\eqref{eq:reg-req-2}, and using the Vlasov equation for $f$, the equation~\eqref{eq:lemhier} for $\bar C_{N,m}$ can be reformulated as
\begin{equation}\label{eq:rewr-Cnn-fn}
\Big(\partial_t+\sum_{i=1}^mv_i\cdot\nabla_{x_i}+\alpha\sum_{i=1}^m\triangle_{v_i}\Big)((f^{\otimes m})^{1-\delta}\bar C_{N,m})\\
=A_m=:\sum_{j=1}^4A_m^j,
\end{equation}
where we have set
\begin{eqnarray*}
A_m^1&:=&(f^{\otimes m})^{1-\delta} R_{N,m},\\
A_m^2&:=&\sqrt{(m+1)(m+2)}\Big(\frac{(N-m)(N-m-1)}{(N-1)^2}\Big)^\frac12\,(f^{\otimes m})^{1-\delta}K_f[\bar C_{N,m+2}]\\
&&+(f^{\otimes m})^{1-\delta}\sum_{i=1}^m\int_\Dd K_f(z_*,z_i)\,\bar C_{N,m}(z_{[m]\setminus i},z_*)\,f(z_*)\,dz_*,\\
A_m^3&:=&-\sum_{i=1}^m\udiv_{v_i}\Big((K\ast f)(x_i)((f^{\otimes m})^{1-\delta}\bar C_{N,m})-\alpha(\nabla_{v}\log f)(z_i)((f^{\otimes m})^{1-\delta}\bar C_{N,m})\Big),\\
A_m^4&:=&-\alpha \Big(\sum_{i=1}^m h_\delta(z_i)\Big)((f^{\otimes m})^{1-\delta}\bar C_{N,m}),
\end{eqnarray*}
and for abbreviation
\[h_\delta:=(1-\delta)\Big((2-\delta)|\nabla_v\log f|^2+2\triangle_v\log f\Big).\]
Appealing to hypoelliptic regularity in the form of Lemma~\ref{lem:hypoell} (but backward in time), recalling the a priori $L^2$-estimate of Lemma~\ref{lem:apriori-L2}, and noting that the boundedness of $f$ ensures $f^{1-\delta}\lesssim f^{1/2}$, we get for all $0\le r\le\frac13$,
\begin{equation}\label{eq:decomp-hypobound}
\int_0^T\|\langle\nabla_{x_1}\rangle^{\frac13-r}((f^{\otimes m})^{1-\delta}\bar C_{N,m})\|_{L^2}^2
\le C(T+1)+C\sum_{i=1}^4\int_0^T\|\langle\nabla_X\rangle^{-r}\langle\nabla_V\rangle^{-1}A_m^i\|_{L^2}^2,
\end{equation}
where we set for abbreviation $Z=(X,V):=(x_{[m]},v_{[m]})\in\Dd^m$.
From here, we split the proof into three steps: in the first two, we examine the different right-hand side terms in~\eqref{eq:decomp-hypobound}, and we conclude in the last step by buckling. In the sequel, $C$ stands for arbitrary constants only depending on $\gamma,\alpha,\delta$, and on assumptions on $K,f$, and we write $\lesssim$ for $\le C\times$.

\medskip\noindent
{\bf Step~1:} Proof that
\begin{multline*}
\sum_{i=2}^4\|\langle\nabla_V\rangle^{-1}A_m^i\|_{L^2}\\
\,\lesssim\,m+(m+1)\|\langle\nabla_{x_1}\rangle^{\gamma}((f^{\otimes m+2})^{1-\delta}\bar C_{N,m+2})\|_{L^2}
+m\|\langle\nabla_{x_1}\rangle^{\gamma}((f^{\otimes m})^{1-\delta}\bar C_{N,m})\|_{L^2}.
\end{multline*}
We start with the contribution of $A_m^2$. By definition, we can bound
\begin{multline}\label{eq:An2-decomp}
\|A_m^2\|_{L^2}
\,\lesssim\,
(m+1) \|(f^{\otimes m})^{1-\delta}K_f[\bar C_{N,m+2}]\|_{L^2}\\
+\Big\|(f^{\otimes m})^{1-\delta}\sum_{i=1}^m\int_\Dd K_f(z_*,z_i)\,\bar C_{N,m}(z_{[m]\setminus i},z_*)\,f(z_*)\,dz_*\Big\|_{L^2}.
\end{multline}
Recalling the definition of the operator $K_f[\cdot]$, cf.~\eqref{eq:def-VC}, we can decompose the first term as follows,
\begin{multline*}
\|(f^{\otimes m})^{1-\delta}K_f[\bar C_{N,m+2}]\|_{L^2}\\
\,\le\,\bigg\|\int_{\Dd^2}K(x'-x'')\cdot\nabla_v\log f(z')\,f(z')^\delta f(z'')^\delta \big((f^{\otimes m+2})^{1-\delta}\bar C_{N,m+2}\big)(\cdot,z',z'')\,dz'dz''\bigg\|_{L^2}\\
+\|K\ast f\|_{L^\infty}\bigg\|\int_{\Dd^2}\nabla_v\log f(z')\,f(z')^\delta f(z'')^\delta \big((f^{\otimes m+2})^{1-\delta}\bar C_{N,m+2}\big)(\cdot,z',z'')\,dz'dz''\bigg\|_{L^2},
\end{multline*}
and thus, by H\"older's inequality and the Sobolev embedding~$H^\gamma\subset L^{2d/(d-2\gamma)}$, splitting the kernel $K(x'-x'')$ for $|x'-x''|\le1$ and $|x'-x''|>1$, and using the exchangeability,
\begin{multline*}
\|(f^{\otimes m})^{1-\delta}K_f[\bar C_{N,m+2}]\|_{L^2}\\
\le \Big(\|K\|_{L^{2d/(d+2\gamma)}(B)}\|f^\delta\|_{L^\infty_xL^2_v}+\big(\|K\|_{L^\infty(B^c)}+\|K\ast f\|_{L^\infty}\big)\|f^\delta\|_{L^2}\Big)\\
\times\|f^\delta\nabla_v\log f\|_{L^2}\|\langle\nabla_{x_1}\rangle^\gamma((f^{\otimes m+2})^{1-\delta}\bar C_{N,m+2})\|_{L^2}.
\end{multline*}
The different factors are precisely controlled by the assumptions on $K$ and $f$, hence
\begin{equation*}
\|(f^{\otimes m})^{1-\delta}K_f[\bar C_{N,m+2}]\|_{L^2}
\lesssim\|\langle\nabla_{x_1}\rangle^\gamma((f^{\otimes m+2})^{1-\delta}\bar C_{N,m+2})\|_{L^2}.
\end{equation*}
Similarly estimating the second term in~\eqref{eq:An2-decomp}, this time using Young's convolution inequality to bound the convolution with the singular part of $K$, we get
\begin{multline*}
\Big\|(f^{\otimes m})^{1-\delta}\sum_{i=1}^m\int_\Dd K_f(z_*,z_i)\,\bar C_{N,m}(z_{[m]\setminus i},z_*)\,f(z_*)\,dz_*\Big\|_{L^2}\\
\le \Big(\|K\|_{L^{2d/(d+2\gamma)}(B)}\|f^{1-\delta}\|_{L^\infty_xL^2_v}+\big(\|K\|_{L^\infty(B^c)}+\|K\ast f\|_{L^\infty}\big)\|f^{1-\delta}\|_{L^2}\Big)\\
\times\|f^\delta\nabla_v\log f\|_{L^2}\|\langle\nabla_{x_1}\rangle^\gamma((f^{\otimes m})^{1-\delta}\bar C_{N,m})\|_{L^2}.
\end{multline*}
We are thus led to
\[\|A_m^2\|_{L^2}\,\lesssim\,(m+1)\|\langle\nabla_{x_1}\rangle^\gamma((f^{\otimes m+2})^{1-\delta}\bar C_{N,m+2})\|_{L^2}+m\|\langle\nabla_{x_1}\rangle^\gamma((f^{\otimes m})^{1-\delta} \bar C_{N,m})\|_{L^2}.\]
Finally, for $A_m^3$ and $A_m^4$, the a priori $L^2$-estimates of Lemma~\ref{lem:apriori-L2} and the assumptions on~$f$ immediately ensure
\[\|\langle\nabla_V\rangle^{-1}A_m^3\|_{L^2}\lesssim m,\qquad\|A_m^4\|_{L^2}\lesssim m.\]

\medskip\noindent
{\bf Step~2:} Proof that for all $\gamma\le r\le1$,
\begin{equation}\label{eq:estim-An1}
\|\langle\nabla_X\rangle^{-r}\langle\nabla_V\rangle^{-1}A_m^1\|_{L^2}\,\lesssim\,m^2.
\end{equation}
Recall $A_m^1=(f^{\otimes m})^{1-\delta}R_{N,m}$, where $R_{N,m}$ is the remainder term defined in Lemma~\ref{lem:hier}. The claim~\eqref{eq:estim-An1} follows from a careful examination of all the terms in the definition of~$R_{N,m}$, using the assumptions on $K$, $f$, and the a priori estimates of Lemma~\ref{lem:apriori-L2}.
For shortness, let us focus on the following three typical terms that are part of $R_{N,m}$,
\begin{eqnarray}
R_{N,m}^1&:=&-\Big(\frac{N-m+1}{m(N-1)^2}\Big)^\frac12\,\sum_{i\neq j}^{m} K(x_i-x_j)\cdot \nabla_{v_i} \bar C_{N,[m]\setminus j},\label{eq:def-RN123}\\
R_{N,m}^2&:=&-\frac1{N-1}\sum_{i\ne j}^mK(x_i-x_j)\cdot\nabla_{v_i}\bar C_{N,m},\nonumber\\
R_{N,m}^3&:=&\Big(\frac{(m+1)(N-m)}{(N-1)^2}\Big)^\frac12\sum_{j=1}^m\int_\Dd K_f(z_*,z_j)\,\bar C_{N,[m]\cup\{*\}}\,f(z_*)\,dz_*.\nonumber
\end{eqnarray}
For the first one, commuting the velocity derivative with the weight, we have by symmetry
\begin{multline*}
\|\langle\nabla_X\rangle^{-r}\langle\nabla_V\rangle^{-1}((f^{\otimes m})^{1-\delta}R_{N,m}^1)\|_{L^2}\\
\,\lesssim\,m^{\frac32}N^{-\frac12}\Big\|\langle\nabla_{x_1}\rangle^{-r}\Big(K(x_1-x_2)(f^{\otimes m})^{1-\delta}\bar C_{N,[2,m]}\Big)\Big\|_{L^2}\\
+m^{\frac32}N^{-\frac12}\Big\|\langle\nabla_{x_1}\rangle^{-r}\Big(K(x_1-x_2)\bar C_{N,[2,m]}\cdot\nabla_{v_1}(f^{\otimes m})^{1-\delta}\Big)\Big\|_{L^2},
\end{multline*}
and thus, for all $r\ge\gamma$, appealing to the Sobolev embedding $L^{2d/(d+2r)}_x\subset H^{-r}_x$, the assumptions on $K,f$, and the a priori $L^2$-estimate of Lemma~\ref{lem:apriori-L2},
\begin{equation*}
\|\langle\nabla_X\rangle^{-r}\langle\nabla_V\rangle^{-1}((f^{\otimes m})^{1-\delta}R_{N,m}^1)\|_{L^2}
\,\lesssim\,m^{\frac32}N^{-\frac12}.
\end{equation*}
We turn to the second typical term,
\begin{multline}\label{eq:decomp-RNm2}
\|\langle\nabla_X\rangle^{-r}\langle\nabla_V\rangle^{-1}((f^{\otimes m})^{1-\delta}R_{N,m}^2)\|_{L^2}\\
\lesssim m^2N^{-1}\Big\|\langle\nabla_{x_1}\rangle^{-r}\Big(K(x_1-x_2)(f^{\otimes m})^{1-\delta}\bar C_{N,m}\Big)\Big\|_{L^2}\\
+\,m^2N^{-1}\|\langle\nabla_{x_1}\rangle^{-r}\Big(K(x_1-x_2)\bar C_{N,m}\cdot\nabla_{v_1}(f^{\otimes m})^{1-\delta}\Big)\Big\|_{L^2}.
\end{multline}
For all $r\ge\gamma$, appealing again to the Sobolev embedding and using the assumptions on~$K,f$, we now find
\begin{eqnarray*}
\lefteqn{\Big\|\langle\nabla_{x_1}\rangle^{-r}\Big(K(x_1-x_2)(f^{\otimes m})^{1-\delta}\bar C_{N,m}\Big)\Big\|_{L^2}}\\
&\lesssim&\Big\|(f^{\otimes m})^{1-\delta}\bar C_{N,m}\Big\|_{L^2(\Dd;L^\infty_xL^2_v\cap L^2(\Dd;L^2(\Dd^{m-2})))}\\
&\lesssim&\|\bar C_{N,m}\|_{L^\infty(\Dd^2;L^2(f^{\otimes m-2}))}.
\end{eqnarray*}
By the mixed a priori estimate of Lemma~\ref{lem:apriori-L2}, this is $\lesssim N/m$. Similarly estimating the second term in~\eqref{eq:decomp-RNm2}, using the assumptions on $\nabla_v f^{1-\delta}=({1-\delta})f^{1-\delta}\nabla_v\log f$, we are led to
\[\|\langle\nabla_X\rangle^{-r}\langle\nabla_V\rangle^{-1}((f^{\otimes m})^{1-\delta}R_{N,m}^2)\|_{L^2}\lesssim m.\]
Finally, for the third typical term $R_{N,m}^3$, we can similarly bound, using again the mixed a priori estimate of Lemma~\ref{lem:apriori-L2},
\begin{eqnarray*}
\|(f^{\otimes m})^{1-\delta}R_{N,m}^3\|_{L^2}
&\lesssim&m^\frac32N^{-\frac12}\Big\|(f^{\otimes m})^{1-\delta}\int_\Dd K_f(z_*,z_1)\,\bar C_{N,[m]\cup\{*\}}\,f(z_*)\,dz_*\Big\|_{L^2}\\
&\lesssim&m^\frac32N^{-\frac12}\|\bar C_{N,m+1}\|_{L^{2d/(d-2\gamma)}(f^{\otimes2};L^2(f^{\otimes m-1}))}\\
&\lesssim&m^\frac32N^{\frac{2\gamma}{d}-\frac12}.
\end{eqnarray*}
As by assumption $\gamma\le\frac d4$, this is bounded by $m^{\frac32}$.
Arguing similarly for all the other terms in the definition of $R_{N,m}$, the claim~\eqref{eq:estim-An1} follows.

\medskip\noindent
{\bf Step~3:} Conclusion.\\
Inserting the estimates of Steps~1 and~2 into~\eqref{eq:decomp-hypobound}, we get for all $\gamma\le r\le\frac13$,
\begin{multline*}
\int_0^T\|\langle\nabla_{x_1}\rangle^{\frac13-r}((f^{\otimes m})^{1-\delta}\bar C_{N,m})\|_{L^2}^2
\,\lesssim\,(T+1)(m+1)^4\\
+(m+1)^2\int_0^T\|\langle\nabla_{x_1}\rangle^{\gamma}((f^{\otimes m+2})^{1-\delta}\bar C_{N,m+2})\|_{L^2}^2\\
+m^2\int_0^T\|\langle\nabla_{x_1}\rangle^{\gamma}((f^{\otimes m})^{1-\delta}\bar C_{N,m})\|_{L^2}^2,
\end{multline*}
and thus, by interpolation with the a priori  $L^2$-estimate of Lemma~\ref{lem:apriori-L2}, for all $\theta\in(0,1)$,
\begin{multline*}
\int_0^T\|\langle\nabla_{x_1}\rangle^{\frac13-r}((f^{\otimes m})^{1-\delta}\bar C_{N,m})\|_{L^2}^2
\,\lesssim\,(T+1)(m+1)^4\\
+(m+1)^2\int_0^T\|\langle\nabla_{x_1}\rangle^{\frac\gamma{1-\theta}}((f^{\otimes m+2})^{1-\delta}\bar C_{N,m+2})\|_{L^2}^{2(1-\theta)}\\
+m^2\int_0^T\|\langle\nabla_{x_1}\rangle^{\frac\gamma{1-\theta}}((f^{\otimes m})^{1-\delta}\bar C_{N,m})\|_{L^2}^{2(1-\theta)}.
\end{multline*}
Provided $\frac{\gamma}{1-\theta}\le\frac13-r$, Young's inequality allows to absorb the last right-hand side term into the left-hand side. In addition, we can restore the power $2$ in the second right-hand side term and gain an arbitrarily small prefactor. We obtain in this way, for all $\gamma<s\le\frac13-\gamma$,
\begin{multline*}
\int_0^T\|\langle\nabla_{x_1}\rangle^{s}((f^{\otimes m})^{1-\delta}\bar C_{N,m})\|_{L^2}^2\\
\,\le\,C(T+1)(m+1)^{4\vee\frac{2s}{s-\gamma}}
+\frac12\int_0^T\|\langle\nabla_{x_1}\rangle^{s}((f^{\otimes m+2})^{1-\delta}\bar C_{N,m+2})\|_{L^2}^2,
\end{multline*}
and the conclusion follows from a direct iteration.
\end{proof}


\section{Non-diffusive case: hierarchical estimates via averaging}\label{sec:averaging}
We turn to the case without velocity diffusion, $\alpha=0$. A priori estimates on dual correlations will now be obtained by applying averaging lemmas to the hierarchy, instead of hypoelliptic regularity. More precisely, we appeal to the following slight variant of the~$\Ld^2$ averaging lemma of~\cite{JLT-22}. For completeness, a short proof is given in appendix. As for hypoelliptic estimates, we emphasize that the multiplicative constant is independent of the phase-space dimension $2dm$.

\begin{lem}[Averaging lemma]\label{lem:averaging}
Given $m\ge1$, let $g_m,e_m$ satisfy in the weak sense on~$[0,T]\times\Dd^m$,
\[\partial_tg_m+\sum_{i=1}^mv_i\cdot\nabla_{x_i}g_m=e_m.\]
Then, for all $0\le r\le1$,
\begin{multline*}
\int_0^T\||\nabla_{x_{[m]}}|^\frac{1-r}4\langle\nabla_{v_{[m]}}\rangle^{-\frac32}g_m\|_{L^2}^2\\[-2mm]
\le C\|g_m(0)\|_{L^2}^2+C\int_0^T\|g_m\|_{L^2}^2+C\int_0^T\|\langle\nabla_{x_{[m]}}\rangle^{-r}\langle\nabla_{v_{[m]}}\rangle^{-1}e_m\|_{L^2}^2,
\end{multline*}
for some universal constant $C$.
\end{lem}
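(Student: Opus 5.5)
We prove the averaging lemma by a direct Fourier computation in all variables, following the $L^2$ averaging lemma of~\cite{JLT-22}; the argument is dimension-free because it only uses the transport structure in the single combined variable $Z=(X,V)\in\R^{2dm}$. Write $X=x_{[m]}$, $V=v_{[m]}$, and note that $\sum_iv_i\cdot\nabla_{x_i}=V\cdot\nabla_X$ is free transport on $\R^{dm}\times\R^{dm}$. We first Fourier transform in $X$ only, with dual variable $k\in\R^{dm}$. For fixed $k$ the equation becomes
\[\partial_t\hat g_m+i(k\cdot V)\hat g_m=\hat e_m,\]
so that $k$ acts only through the scalar $k\cdot V=|k|\,(\hat k\cdot V)$. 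After a rotation in $V$-space, which preserves $\langle\nabla_V\rangle$, this is a one-dimensional transport in velocity along direction $\hat k$, uniformly in the dimension $dm$. We then also Fourier transform in time. To do this we extend $g_m$ by zero outside $[0,T]$, so that the boundary terms $g_m(0)$ and $g_m(T)$ appear as Dirac sources. The term $\|g_m(0)\|_{L^2}$ accounts for one endpoint, and the other endpoint is handled by the term $\int_0^T\|g_m\|^2$ through a smooth time cutoff.

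The core is the classical averaging decomposition. Fix a test profile in $V$, or equivalently work with $\langle\nabla_V\rangle^{-3/2}g_m$, viewed as a function of the velocity component $w=\hat k\cdot V$. Split according to whether $|\tau+|k|w|\le\lambda$ or $>\lambda$, where $\lambda$ is a threshold to be optimized. On the nonresonant set we invert the symbol, obtaining $\hat g=\hat e/(i(\tau+|k|w))$. Duality against the velocity smoothing $\langle\nabla_V\rangle^{-1}$, together with $H^{1}_w$-regularity of the multiplier $(\tau+|k|w)^{-1}\mathbf 1_{>\lambda}$, then gives a bound of order $\lambda^{-1}(1+|k|/\lambda)\|\langle\nabla_V\rangle^{-1}\hat e\|$. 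On the resonant set, the strip $\{w:|\tau+|k|w|\le\lambda\}$ has width $\lambda/|k|$. Since $\langle\nabla_V\rangle^{-3/2}$ maps $L^2$ into $H^{3/2}_w\subset L^\infty_w$, and more precisely into Hölder-$1$ in $w$ with room to spare, we get a gain $(\lambda/|k|)^{1/2}\|\hat g\|$. Balancing the two contributions with $\lambda\sim|k|^{1/2}$ yields $|k|^{1/4}$ regularity in $X$ when $e_m$ is measured with $\langle\nabla_V\rangle^{-1}$ only, that is, the case $r=0$.

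For general $r$, the extra weight $\langle k\rangle^{-r}$ on $e_m$ is absorbed by choosing $\lambda\sim|k|^{(1+r)/2}$. The nonresonant term then costs $\langle k\rangle^{r}$ fewer powers, while the resonant gain becomes $(\lambda/|k|)^{1/2}=|k|^{-(1-r)/4}$. This is exactly the exponent $\frac{1-r}4$ in the statement, and it interpolates linearly between $r=0$ and $r=1$, where there is no gain. Low frequencies $|k|\le1$ are trivially bounded by $\int_0^T\|g_m\|^2$. Finally, Plancherel in $(t,X)$ together with the rotation invariance of $V$-Sobolev norms returns the stated $L^2$ inequality with a universal constant.

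The main obstacle is the nonresonant estimate with only $\langle\nabla_V\rangle^{-1}$ control on $e_m$. Dividing by $\tau+|k|w$ and transferring one $w$-derivative requires a commutator-type bound on the multiplier in $w$, which must be carried out only in the one direction $\hat k$ so that constants do not depend on $dm$. We plan to handle this as in~\cite{JLT-22}, by writing the cutoff multiplier $\mathbf 1_{|\tau+|k|w|>\lambda}/(\tau+|k|w)$ as a function with $W^{1,\infty}_w$-norm $\lesssim\lambda^{-1}(1+|k|/\lambda)$. This is exactly the size that the choice of $\lambda$ balances.
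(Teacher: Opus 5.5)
\textbf{There are two genuine gaps: the nonresonant estimate loses exactly the derivative you are trying to gain, and the final-time term $\|g_m(T)\|^2$ is never removed.}

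\textbf{Nonresonant part.} Your resonant estimate is fine and dimension-free. By duality it reduces to $\|\mathbf 1_{\{|\tau+|k|w|\le\lambda\}}\langle\nabla_V\rangle^{-3/2}\phi\|_{L^2}\lesssim(\lambda/|k|)^{1/2}\|\phi\|_{L^2}$, using $H^{3/2}_w\subset L^\infty_w$ in the single direction $w=\hat k\cdot V$.

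The problem is the nonresonant part, which you rightly call the crux. Your bound $\lambda^{-1}(1+|k|/\lambda)$ is the $W^{1,\infty}_w$ norm of the cutoff multiplier $m$, which must be smoothed anyway since a sharp indicator is not Lipschitz. This bound is too weak, and your balancing does not close. Each piece of $\|\langle\nabla_V\rangle^{-3/2}\hat g_m\|$ must carry an extra factor $|k|^{-\frac{1-r}4}$ relative to the right-hand side.
\begin{itemize}
\item At $r=0$ with $\lambda\sim|k|^{1/2}$, your nonresonant factor is of order $1$. After multiplying by $|k|^{1/4}$ you are left with $|k|^{1/4}\|\langle\nabla_V\rangle^{-1}\hat e_m\|$, a loss of a quarter $x$-derivative on $e_m$.
\item For general $r$ with $\lambda\sim|k|^{\frac{1+r}2}$, you get $|k|^{-r}$ instead of the required $|k|^{-r-\frac{1-r}4}$.
\item Optimizing $\lambda$ against $\lambda^{-1}+|k|\lambda^{-2}$ only yields the exponent $\frac{1-r}5$.
\end{itemize}
The missing idea is to use the extra half velocity derivative in the nonresonant region too. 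In the pairing $\langle\langle\nabla_V\rangle^{-1}\hat e_m,\langle\nabla_V\rangle(\bar m\psi)\rangle$ with $\psi=\langle\nabla_V\rangle^{-3/2}\phi$, bound
$\|(\partial_wm)\psi\|_{L^2}\le\|\partial_wm\|_{L^2_w}\|\psi\|_{L^2_{V_\perp}L^\infty_w}\lesssim|k|^{1/2}\lambda^{-3/2}\|\phi\|_{L^2}$,
rather than $\|\partial_wm\|_{L^\infty}\|\psi\|_{L^2}$. The nonresonant factor becomes $\lambda^{-1}+|k|^{1/2}\lambda^{-3/2}$. At $\lambda=|k|^{\frac{1+r}2}$ this equals $|k|^{-\frac{1+3r}4}=|k|^{-r-\frac{1-r}4}$, which closes the argument.

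\textbf{Final time.} Extending $g_m$ by zero creates the source $g_m(0)\delta_{t=0}-g_m(T)\delta_{t=T}$. Since $\int|m|^2\,d\tau\lesssim\lambda^{-1}$, the second Dirac mass contributes $|k|^{-r}\|\hat g_m(T,k)\|^2$, i.e.\ a term $\|g_m(T)\|_{L^2}^2$.

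This term is not controlled by the right-hand side. Because $e_m$ is only known in a negative norm, the $L^2$ norm is not propagated. For instance, free transport of a unit datum with large velocity frequencies, switched on during $[T-\delta,T]$, has $\|g_m(T)\|=1$ while the right-hand side is $O(\delta)$.

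A smooth time cutoff vanishing at $T$ does not fix this. The resulting source $\chi'g_m$ is bounded by $\int_0^T\|g_m\|^2$ with a universal constant only if $\chi$ varies on an $O(1)$ time scale, and then you only estimate $\chi g_m$ and lose the final layer. Any extension past $T$ compatible with the equation costs $\|g_m(T)\|$ again, so time-Fourier arguments are intrinsically two-sided. Such a two-sided bound would still suffice for Lemma~\ref{lem:averaging-hilb}, whose right-hand side contains $\sup_{[0,T]}\|g\|^2$, but it is not the stated lemma.

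\textbf{How the paper avoids both issues.} It uses the positive-commutator method of~\cite{JLT-22}.
\begin{itemize}
\item In full Fourier variables the equation reads $\partial_t\hat g_m-\xi\cdot\nabla_\eta\hat g_m=\hat e_m$.
\item The regions $|\xi|\le1$ and $|\eta|\gtrsim\langle\xi\rangle^{\frac{1-r}2}$ are discarded at the cost of $\int_0^T\|g_m\|^2$. This velocity cutoff plays the role of your $\lambda$.
\item One then multiplies by $|\xi|^{\frac{1-r}2-1}M\overline{\hat g_m}$, with $M=1+\frac{\xi\cdot\eta}{|\xi|\langle\eta\rangle}\in[0,2]$. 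Since $\xi\cdot\nabla_\eta M\ge|\xi|\langle\eta\rangle^{-3}$, the commutator produces the left-hand side.
\item The source is controlled via $|\xi|^{\frac{1-r}2-1}\lesssim\langle\xi\rangle^{-r}\langle\eta\rangle^{-1}$ on the cutoff support.
\item Since $M\ge0$, the final-time term has a favorable sign and is simply dropped, which is why only $g_m(0)$ appears.
\end{itemize}
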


\begin{rem}\label{lem:averaging-re}
For later purposes, we note that, by a straightforward inspection of the proof in Appendix~\ref{app:kinetic-reg}, the above can be strengthened as follows in the case $e_m=e_m^0+\Div_v(e_m^1)$: for all $0\le r,s\le 1$,
\begin{multline*}
\int_0^T\||\nabla_{x_{[m]}}|^{\frac{1-s}2\wedge\frac{1-r}4}\langle\nabla_{v_{[m]}}\rangle^{-\frac32}g_m\|_{L^2}^2\\[-2mm]
\le C(T+1)\sup_{[0,T]}\|g_m\|_{L^2}^2+C\int_0^T\|\langle\nabla_{x_{[m]}}\rangle^{-s}e_m^0\|_{L^2}^2+C\int_0^T\|\langle\nabla_{x_{[m]}}\rangle^{-r}e_m^1\|_{L^2}^2.
\end{multline*}
\end{rem}

Applying this lemma directly to the $m$-particle equation would distribute the loss of velocity regularity over all particle variables, which would be problematic to adapt the hierarchical iteration in the proof of Proposition~\ref{prop:bnd-diff}. To remedy this, we appeal to the following convenient Hilbert-valued version.
The idea is to view the last $m-2$ particle variables instead as a Hilbert-valued parameter and to keep their mean-field transport on the left-hand side: since that transport is skew-adjoint in $L^2$ and acts only on the Hilbert-valued variables, it can be removed by a unitary conjugation, after which the two-particle version of the above averaging lemma applies with regularity loss only in $(v_1,v_2)$.

\begin{lem}\label{lem:averaging-hilb}
Let $\mathfrak h$ be a fixed, separable Hilbert space and let $L_t$ be a time-dependent skew-adjoint operator on $\mathfrak h$ generating a unitary propagator. Let $g,e$ be $\mathfrak h$-valued functions satisfying in the weak sense on $[0,T]\times\Dd^2$,
\[\Big(\partial_t+v_1\cdot\nabla_{x_1}+v_2\cdot\nabla_{x_2}+L_t\Big)g=e.\]
Then, for all $0\le r\le1$,
\begin{multline*}
\int_0^T\||\nabla_{x_{[2]}}|^{\frac{1-r}4}\langle\nabla_{v_{[2]}}\rangle^{-\frac32}g\|_{L^2(\Dd^2;\mathfrak h)}^2\,dt\\[-2mm]
\le C(T+1)\sup_{[0,T]}\|g\|_{L^2(\Dd^2;\mathfrak h)}^2
+C\int_0^T\|\langle\nabla_{x_{[2]}}\rangle^{-r}\langle\nabla_{v_{[2]}}\rangle^{-1}e\|_{L^2(\Dd^2;\mathfrak h)}^2\,dt.
\end{multline*}
\end{lem}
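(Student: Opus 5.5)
The plan is to remove $L_t$ by a unitary conjugation and then apply a two-particle, Hilbert-valued version of Lemma~\ref{lem:averaging}. Let $U(t,s)$ be the unitary propagator on $\mathfrak h$ generated by $L_t$, so that $\partial_tU(t,s)=-L_tU(t,s)$, and set
\[\tilde g(t):=U(0,t)\,g(t),\qquad \tilde e(t):=U(0,t)\,e(t),\]
where $U(0,t)=U(t,0)^{-1}=U(t,0)^*$.

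The first step is to check the conjugated equation. Formally, $\partial_t(U(0,t)g)=U(0,t)(\partial_t+L_t)g$. Since $U(0,t)$ acts only on the $\mathfrak h$-variable, it commutes with $v_i\cdot\nabla_{x_i}$ and with every Fourier multiplier in $(x_1,x_2,v_1,v_2)$. Hence
\[\partial_t\tilde g+v_1\cdot\nabla_{x_1}\tilde g+v_2\cdot\nabla_{x_2}\tilde g=\tilde e\]
holds in the weak sense. This is justified by testing against $U(0,t)^*\psi$ for smooth $\mathfrak h$-valued test functions $\psi$, using the weak formulation for $g$. Because $U(0,t)$ is unitary, all norms of the form $\|\langle\nabla_{x_{[2]}}\rangle^{a}\langle\nabla_{v_{[2]}}\rangle^{b}\,\cdot\,\|_{L^2(\Dd^2;\mathfrak h)}$ are preserved. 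Both sides of the desired estimate are therefore the same for $(g,e)$ as for $(\tilde g,\tilde e)$.

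The second step is to apply Lemma~\ref{lem:averaging} with $m=2$ to $\tilde g$, in its $\mathfrak h$-valued form. Pick an orthonormal basis $(\phi_k)_k$ of the separable space $\mathfrak h$. The scalar components $\tilde g_k:=\langle \tilde g,\phi_k\rangle_{\mathfrak h}$ and $\tilde e_k:=\langle\tilde e,\phi_k\rangle_{\mathfrak h}$ satisfy the free-transport equation on $[0,T]\times\Dd^2$. The lemma applied to each component gives
\[\int_0^T\||\nabla_{x_{[2]}}|^{\frac{1-r}4}\langle\nabla_{v_{[2]}}\rangle^{-\frac32}\tilde g_k\|_{L^2}^2\le C\|\tilde g_k(0)\|_{L^2}^2+C\int_0^T\|\tilde g_k\|_{L^2}^2+C\int_0^T\|\langle\nabla_{x_{[2]}}\rangle^{-r}\langle\nabla_{v_{[2]}}\rangle^{-1}\tilde e_k\|_{L^2}^2.\]
The constant $C$ is universal, so summing over $k$ and using Parseval in $\mathfrak h$ yields the same inequality with $\mathfrak h$-valued norms. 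Finally, bound $\|\tilde g(0)\|^2+\int_0^T\|\tilde g\|^2$ by $(T+1)\sup_{[0,T]}\|g\|_{L^2(\Dd^2;\mathfrak h)}^2$ and undo the conjugation.

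The main obstacle is the rigour of the first step rather than any estimate. For a general time-dependent skew-adjoint $L_t$, domains are an issue, and $g$ is only a weak solution with $L_tg$ interpreted in the weak sense. I would handle this by taking the notion of weak solution to mean
\[\frac{d}{dt}\langle g,U(t,0)\psi\rangle=\langle e,U(t,0)\psi\rangle-\langle g,(v\cdot\nabla_x)U(t,0)\psi\rangle\]
for $\psi$ in a dense set. This is the natural notion for operators given only through their propagator, and the conjugated equation follows from it directly. If one insists on a more classical formulation, regularize $g$ in time by mollifying $\tilde g$ and pass to the limit. This relies on weak-$*$ continuity in time of $\tilde g$ in $L^2(\Dd^2;\mathfrak h)$, which also gives meaning to the initial value $\tilde g(0)$ used above.
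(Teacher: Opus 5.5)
Your proposal is correct and is essentially the paper's argument: since $\langle U(0,t)g,\phi_k\rangle_{\mathfrak h}=\langle g,U(t,0)\phi_k\rangle_{\mathfrak h}$, conjugating by the propagator and expanding in a fixed orthonormal basis is the same as the paper's expansion of $g$ against the time-evolved orthonormal basis $u_n(t)=U(t,0)\bar u_n$ (which solves $\partial_tu_n-L_t^*u_n=0$), followed by Lemma~\ref{lem:averaging} applied to each scalar component and summation via Parseval. Your extra discussion of how to interpret the weak formulation is more careful than the paper, which carries out this step formally.
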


\begin{proof}
Let $(\bar u_n)_{n\in \N}$ denote an orthonormal basis of $\mathfrak h$, and, for each $n$, let $u_n$ solve
\[\partial_t u_n-L_t^* u_n=0,\qquad u_n|_{t=0}=\bar u_n.\]
From the assumptions, $(u_n(t))_{n\in\N}$ form another orthonormal basis of $\mathfrak h$ for every $t\in[0,T]$.
We now consider $\langle u_n,g\rangle_{\mathfrak h}$,
which are only functions of $t,z_1,z_2$. From the equations on $g$ and $u_n$, we obtain
\[(\partial_t+v_1\cdot \nabla_{x_1}+v_2\cdot\nabla_{x_2})\langle u_n,g\rangle_{\mathfrak h}=\langle u_n,e\rangle_{\mathfrak h}.\]
We can hence directly apply Lemma~\ref{lem:averaging} to find that
\begin{multline*}
\int_0^T\||\nabla_{x_{[2]}}|^{\frac{1-r}4}\langle\nabla_{v_{[2]}}\rangle^{-\frac32}\langle u_n,g\rangle_{\mathfrak h}\|_{L^2}^2\,dt\\[-2mm]
\le C\|\langle \bar u_n,g(0)\rangle_{\mathfrak h}\|_{L^2}^2+C\int_0^T\|\langle u_n,g\rangle_{\mathfrak h}\|_{L^2}^2
+C\int_0^T\|\langle\nabla_{x_{[2]}}\rangle^{-r}\langle\nabla_{v_{[2]}}\rangle^{-1}\langle u_n,e\rangle_{\mathfrak h}\|_{L^2}^2\,dt.
\end{multline*}
Summing over $n$ and using that $(u_n(t))_n$ is an orthonormal basis for all $t$, we have
\begin{eqnarray*}
\lefteqn{\sum_n \int_0^T\||\nabla_{x_{[2]}}|^{\frac{1-r}4}\langle\nabla_{v_{[2]}}\rangle^{-\frac32}\langle u_n,g\rangle_{\mathfrak h}\|_{L^2}^2}\\
&=&\int_0^T \int_{\Dd^2} \sum_n\big|\langle u_n,|\nabla_{x_{[2]}}|^{\frac{1-r}4}\langle\nabla_{v_{[2]}}\rangle^{-\frac32} g\rangle_{\mathfrak h}\big|^2\\
&=&\int_0^T \||\nabla_{x_{[2]}}|^{\frac{1-r}4}\langle\nabla_{v_{[2]}}\rangle^{-\frac32} g\|_{L^2(\Dd^2;\mathfrak{h})}^2\,dt,
\end{eqnarray*}
and similarly for the other terms. The conclusion follows.
\end{proof}

As singular terms appear in the $m$-particle equation in all variables, we will need to combine the above with a mollification argument in the background variables. For $\e\in(0,1]$ to be chosen below, let $M_\e(z):=\e^{-2d}M(z/\e)$ be a family of mollifiers where $M$ is smooth, exponentially decaying, and normalized by $\int_\Dd M=1$. For any $P\subset[m]$, denote by $M_\e^P$ the convolution operator acting only in the variables $(z_j)_{j\in P}$,
\[M_\e^Ph_m(z_{[m]}):=\int_{\Dd^{\sharp P}}\Big(\prod_{j\in P}M_\e(z_j-z_j')\Big)h_m(z_{[m]\setminus P},z_P')\,dz_P'.\]
In these terms, we establish the following uniform-in-$N$ a priori estimate.

\begin{prop}\label{prop:bnd-nodiff}
Let $\alpha=0$, let
\[K\in L^{2d/(d+2\gamma)}_\loc(\R^d),\qquad\sup_{|x|\ge1}|K(x)|<\infty,\]
for some
\[0\le\gamma<\gamma_*:=\left\{\begin{array}{lll}
\tfrac1{16}\big(5d+4-\sqrt{(5d+4)^2-32d}\big)&:&d\ge2,\\[1mm]
\frac12(\sqrt{5}-2)&:&d=1.
\end{array}\right.\]
Also assume that the mean-field solution~$f$ satisfies the regularity conditions~\eqref{eq:reg-req-0}, \eqref{eq:reg-req-0b}, and~\eqref{eq:reg-req-1} for some~$\delta\in(0,\frac12]$.
Then there is $\theta>0$ such that for the choice $\e=N^{-\theta}$ we have for all~$0\le m\le N$,
\[\int_0^T\||\nabla_{x_{[2]}}|^\gamma\langle\nabla_{v_{[2]}}\rangle^{-\frac32}M_\e^{[3,m]}((f^{\otimes m})^{1-\delta}\bar C_{N,m})\|_{L^2}^2\,\le\,C(T+1)(m+1)^c,\]
for some constant $c$ only depending on $\gamma$ and some constant $C$ further depending on $\delta$ and on assumptions on $K,f$.
\end{prop}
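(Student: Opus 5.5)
We follow the scheme of Proposition~\ref{prop:bnd-diff}, with Lemma~\ref{lem:averaging-hilb} in place of hypoellipticity. Set $G_m:=M_\e^{[3,m]}((f^{\otimes m})^{1-\delta}\bar C_{N,m})$, viewed as a function of $(z_1,z_2)$ with values in $\mathfrak h:=L^2(\Dd^{m-2})$. Weighting the hierarchy~\eqref{eq:lemhier} by $(f^{\otimes m})^{1-\delta}$ and using the Vlasov equation for $f$, as in~\eqref{eq:rewr-Cnn-fn} with $\alpha=0$, gives a transport equation with source terms $A_m^1,\dots,A_m^4$. We keep the mean-field transport $\sum_{j\ge3}\big(v_j\cdot\nabla_{x_j}+(K\ast f)(x_j)\cdot\nabla_{v_j}\big)$ in the background variables on the left-hand side, together with the skew-adjoint correction coming from the weight. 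This is a divergence-free field, hence skew-adjoint on $\mathfrak h$. Since $K\ast f\in W^{1,\infty}$, it generates a unitary propagator, and it commutes with $M_\e^{[3,m]}$ up to a commutator
\[
[M_\e^{[3,m]},(K\ast f)(x_j)\cdot\nabla_{v_j}].
\]
This commutator is bounded in $L^2$ by $\|\nabla(K\ast f)\|_{L^\infty}$ times $O(1)$ per particle, hence it only costs a factor $m$. We then apply Lemma~\ref{lem:averaging-hilb} (backward in time), or rather its $\Div_v$-refined form as in Remark~\ref{lem:averaging-re}, and use Lemma~\ref{lem:apriori-L2} for $\sup_t\|G_m\|_{L^2}\lesssim1$.

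The source terms are handled as follows.
\begin{enumerate}[(i)]
\item The mean-field force terms in $(v_1,v_2)$ are in $\Div_v$ form and cost nothing.
\item The term $K_f[\bar C_{N,m+2}]$ and the linear $K_f$-terms acting on particles $1,2$ are estimated as in Step~1 of Proposition~\ref{prop:bnd-diff}. By H\"older and Sobolev, they are bounded by $(m+1)$ times the $|\nabla_{x_{[2]}}|^\gamma$-norm of $G_{m+2}$, respectively of $G_m$. Here the negative velocity regularity $\langle\nabla_v\rangle^{-3/2}$ must be traded against the assumption~\eqref{eq:reg-req-1}: $f^\delta\nabla_v\log f\in L^\infty_xH^{3/2}_v$, used by duality in $v'$.
\item The singular terms involving background variables $j\ge3$ (the $K_f(z_*,z_j)$-terms and the $R_{N,m}$-terms with $K(x_i-x_j)$, $i,j\ge3$) are not seen by the averaging gain in $(x_1,x_2)$. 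This is the purpose of the mollification. After $M_\e$, a singular kernel in $x_j$ costs a negative power $\e^{-a}$ with $a=a(\gamma,d)$. For example, $\|M_\e(K\,\cdot)\|$ is bounded via $L^{2d/(d+2\gamma)}\to L^2$ smoothing at a cost $\e^{-\gamma-\dots}$, and the $\nabla_{v_j}$ derivatives cost an extra $\e^{-1}$. These terms carry prefactors $N^{-1/2}$ or $N^{-1}$ times the mixed a priori bounds $(N/m)^{k(\frac12-\frac1q)}$ from Lemma~\ref{lem:apriori-L2}. With $\e=N^{-\theta}$, we get an overall power $N^{-b+\theta a}$ with $b>0$. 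The remaining terms of $R_{N,m}$ are treated exactly as in Step~2 of Proposition~\ref{prop:bnd-diff}, costing $m^2$.
\end{enumerate}

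We then conclude by buckling, as in Step~3. Interpolating the averaging gain $|\nabla_x|^{(1-r)/4}$ (or $\frac{1-s}{2}\wedge\frac{1-r}{4}$) against the $L^2$ bound, and using $r\ge\gamma$ to place the remainder in $H^{-r}_x$, we absorb the diagonal term and iterate in $m\mapsto m+2$ with a factor $\frac12$.

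\textbf{Main obstacle.} The main obstacle is the bookkeeping in (iii) combined with the loss $\langle\nabla_v\rangle^{-3/2}$. Closing the iteration requires $\gamma$ below the averaging exponent after interpolation with this velocity loss. It also requires a window $\theta>0$ where mollification errors vanish, i.e.\ $\theta a<b$, while the difference between $M_\e$ applied and not applied is controlled. The resulting quadratic constraint in $\gamma$ should be exactly $\gamma<\gamma_*$, with the value $\frac1{16}(5d+4-\sqrt{(5d+4)^2-32d})$. I would first try optimizing $r$, the interpolation exponent, and $\theta$ jointly, treating $d=1$ separately where the Sobolev exponents degenerate.
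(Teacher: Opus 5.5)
Your outline has the same architecture as the paper: the weighted equation with $\alpha=0$, background mean-field transport kept on the left as the skew-adjoint $L_t$ of Lemma~\ref{lem:averaging-hilb}, mollification $M_\e^{[3,m]}$ with an $O(m)$ commutator, $\e^{-a}N^{-b}$ costs for singular terms in background variables, and buckling under $\gamma\le r$, $\gamma<\frac{1-r}4$. Step~(ii), however, has a genuine gap.

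You bound the source $\langle\nabla_{x_{[2]}}\rangle^{-r}\langle\nabla_{v_{[2]}}\rangle^{-1}M_\e^{[3,m]}\big((f^{\otimes m})^{1-\delta}K_f[\bar C_{N,m+2}]\big)$ directly by the regularity norm of $G_{m+2}=M_\e^{[3,m+2]}\big((f^{\otimes m+2})^{1-\delta}\bar C_{N,m+2}\big)$. To use the $H^\gamma_x$ regularity of $G_{m+2}$ on the two variables integrated against $K_f$, you must relabel by symmetry. After relabeling, $G_{m+2}$ is mollified in all free variables $z_1,\ldots,z_m$, including $z_1,z_2$, and these are \emph{not} mollified in the source. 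The two cannot be compared without loss: $\hat M(\e\,\cdot)$ decays exponentially above frequency $\e^{-1}$, while the smoothing $\langle\xi\rangle^{-r}\langle\eta\rangle^{-1}$ available in $(z_1,z_2)$ decays only polynomially. The paper therefore splits, in $(z_1,z_2)$,
\[
\|\langle\nabla_x\rangle^{-r}\langle\nabla_v\rangle^{-1}h\|_{L^2}\lesssim\|M_\e h\|_{L^2}+\e^r\|h\|_{L^2},
\]
cf.~\eqref{eq:interpol-Leps}. Only the fully mollified part is controlled by $G_{m+2}$. The unmollified remainder can only be estimated through the mixed a priori bounds of Lemma~\ref{lem:apriori-L2}, which give $\|(f^{\otimes m})^{1-\delta}K_f[\bar C_{N,m+2}]\|_{L^2}\lesssim N^{2\gamma/d}$. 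This produces the extra error $(m+1)\e^rN^{2\gamma/d}$ in~\eqref{eq:todo-An2}.

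This error is exactly the mechanism behind $\gamma_*$ that your proposal lacks. With $\e=N^{-\theta}$ it forces the \emph{lower} bound $\theta\ge\frac{2\gamma}{rd}$. The competition of this lower bound with the upper bounds $\theta\le\frac1{2(1+\gamma)}$ and $\theta\le\frac{1-2\gamma/d}{1+d/2}$ from the background singular terms, together with $\gamma<\frac{1-r}4$, yields the quadratic condition $\gamma<\gamma_*$; see~\eqref{eq:cond-rgamma}--\eqref{assumptiongammar}. In your scheme $\theta$ is bounded only from above, so you could let $\theta\downarrow0$ and would obtain $\gamma<\frac15$ in every dimension. That is strictly larger than $\gamma_*$, so the constraint you announce has nothing producing it.

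A secondary issue is in (iii). You list the linear terms $\int_\Dd K_f(z_*,z_j)\,\bar C_{N,m}(\ldots,z_*)f(z_*)\,dz_*$ with $j\ge3$ among those paid for by $\e^{-a}N^{-b}$. These terms come from $A_m^2$ with an $O(1)$ prefactor and no $N^{-b}$ factor, so that treatment would not close. They need no mollification anyway: they are convolutions in $x_j$ against $K\in L^1_\loc$, hence bounded operators as in~\eqref{eq:bnd-Af-delta-explicit}. Conversely, cross terms of $R_{N,m}$ such as $K(x_1-x_3)\cdot\nabla_{v_3}$ are not handled as in Step~2 of Proposition~\ref{prop:bnd-diff}. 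There $\nabla_{v_3}$ is not absorbed by $\langle\nabla_{v_{[2]}}\rangle^{-1}$ and must be paid for with $\e^{-1}$ through $M_\e$ in $z_3$.
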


\begin{proof}
For $\alpha=0$, equation~\eqref{eq:rewr-Cnn-fn} can be reorganized as
\begin{multline*}
\Big(\partial_t+v_1\cdot\nabla_{x_1}+v_2\cdot\nabla_{x_2}+\sum_{i=3}^m\big(v_i\cdot\nabla_{x_i}+K\ast f(x_i)\cdot\nabla_{v_i}\Big)((f^{\otimes m})^{1-\delta}\bar C_{N,m})\\[-3mm]
=A_m^1+A_m^2-\sum_{i=1}^2\Div_{v_i}\Big((K\ast f)(x_i)(f^{\otimes m})^{1-\delta}\bar C_{N,m}\Big),
\end{multline*}
where $A_m^1,A_m^2$ are defined as in~\eqref{eq:rewr-Cnn-fn}.
Applying the mollification operator $M_\e^{[3,m]}$ to both sides of this equation, only acting in the variables $z_3,\ldots,z_m$, we obtain
\begin{multline}\label{eq:pde-Meps-f-C}
\Big(\partial_t+v_1\cdot\nabla_{x_1}+v_2\cdot\nabla_{x_2}+\sum_{i=3}^m(v_i\cdot\nabla_{x_i}+K\ast f(x_i)\cdot\nabla_{v_i})\Big)M_\e^{[3,m]}((f^{\otimes m})^{1-\delta}\bar C_{N,m})\\[-3mm]
=R_{\e,m}+M_\e^{[3,m]}A_m^1+M_\e^{[3,m]}A_m^2-\sum_{i=1}^2\Div_{v_i}\Big((K\ast f)(x_i)M_\e^{[3,m]}((f^{\otimes m})^{1-\delta}\bar C_{N,m})\Big),
\end{multline}
in terms of the commutator
\[R_{\e,m}:=\bigg[\sum_{i=3}^m\big(v_i\cdot\nabla_{x_i}+K\ast f(x_i)\cdot\nabla_{v_i}\big),M_\e^{[3,m]}\bigg](f^{\otimes m})^{1-\delta}\bar C_{N,m}.\]
For one particle variable, this commutator is explicitly
\begin{multline*}
\big[v\cdot\nabla_x+K\ast f(x)\cdot\nabla_v,M_\e\big]h(z)
 =\int_{\mathbb D}\Big((v-v')\cdot\nabla_xM_\varepsilon(z-z')\\
+\big(K\ast f(x)-K\ast f(x')\big)\cdot\nabla_vM_\varepsilon(z-z')\Big)h(z')\,dz'.
\end{multline*}
Recalling $M_\e(z)=\e^{-2d}M(z/\e)$ with $M$ exponentially decaying, and using the Lipschitz bound on $K\ast f$, we deduce, uniformly in $\e$,
\[\big\|\big[v\cdot\nabla_x+K\ast f(x)\cdot\nabla_v,M_\e\big]h\big\|_{L^2}\lesssim \|h\|_{L^2}.\]
Applying this to $R_{\e,m}$, together with the a priori $L^2$-estimate of Lemma~\ref{lem:apriori-L2} and the boundedness of~$f$, we get
\[\|R_{\e,m}\|_{L^2}\lesssim m.\]
Now applying Lemma~\ref{lem:averaging-hilb} to equation~\eqref{eq:pde-Meps-f-C} with $\mathfrak h=L^2(\Dd^{m-2})$ and with~$L_t$ given by mean-field transport in the variables $z_3,\ldots,z_m$, using this bound on the commutator term and again the a priori $L^2$-estimate of Lemma~\ref{lem:apriori-L2}, we deduce for any $0\le r\le1$,
\begin{multline}\label{eq:estim-averaging-Dn}
\int_0^T\||\nabla_{x_{[2]}}|^{\frac{1-r}4}\langle\nabla_{v_{[2]}}\rangle^{-\frac32}M_\e^{[3,m]}((f^{\otimes m})^{1-\delta}\bar C_{N,m})\|_{L^2}^2\\[-1mm]
\lesssim T+1+\sum_{j=1}^2\int_0^T\|\langle\nabla_{x_{[2]}}\rangle^{-r}\langle\nabla_{v_{[2]}}\rangle^{-1}M_\e^{[3,m]}A_m^j\|_{L^2}^2.
\end{multline}
It remains to estimate these norms of $A_m^1$ and $A_m^2$, which we separately do in the following two steps.

\medskip\noindent
{\bf Step~1:} We control $A_m^2$ by showing that for all $0\le r\le 1$,
\begin{multline}\label{eq:todo-An2}
\|\langle\nabla_{x_{[2]}}\rangle^{-r}\langle\nabla_{v_{[2]}}\rangle^{-1}M_\e^{[3,m]}A_m^2\|_{L^2}
\,\lesssim\,
1+(m+1)\e^rN^\frac{2\gamma}d\\
+(m+1)\|\langle\nabla_{x_{[2]}}\rangle^\gamma\langle\nabla_{v_{[2]}}\rangle^{-\frac32}M_\e^{[3,m+2]}((f^{\otimes m+2})^{1-\delta}\bar C_{N,m+2})\|_{L^2}\\
+m\|\langle\nabla_{x_{[2]}}\rangle^\gamma\langle\nabla_{v_{[2]}}\rangle^{-\frac32}M_\e^{[3,m]}((f^{\otimes m})^{1-\delta}\bar C_{N,m})\|_{L^2}.
\end{multline}
By definition of $A_m^2$,
\begin{multline}\label{eq:contr-An1-dec}
\|\langle\nabla_{x_{[2]}}\rangle^{-r}\langle\nabla_{v_{[2]}}\rangle^{-1}M_\e^{[3,m]}A_m^2\|_{L^2}\\
\,\lesssim\,(m+1)\|\langle\nabla_{x_{[2]}}\rangle^{-r}\langle\nabla_{v_{[2]}}\rangle^{-1}M_\e^{[3,m]}((f^{\otimes m})^{1-\delta}K_f[\bar C_{N,m+2}])\|_{L^2}\\
+\sum_{i=1}^m\Big\|\langle\nabla_{x_{[2]}}\rangle^{-r}\langle\nabla_{v_{[2]}}\rangle^{-1}M_\e^{[3,m]}\\
\times\Big((f^{\otimes m})^{1-\delta}\int_\Dd K_f(z_*,z_i)\bar C_{N,m}(z_{[m]\setminus\{i\}},z_*)\,f(z_*)\,dz_*\Big)\Big\|_{L^2}.
\end{multline}
Both right-hand side terms can be treated in the same way, so we only discuss the first one for example. As the mollification kernel $M$ has exponential decay and satisfies $\int_\Dd M=1$, we have for any function $h$ and any $0\le r\le1$, uniformly in $\e$,
\begin{equation*}
\|\langle\nabla_{x}\rangle^{-r}\langle\nabla_{v}\rangle^{-1}h\|_{L^2}\,\lesssim\,\|M_\e\ast h\|_{L^2}+\e^r\|h\|_{L^2}.
\end{equation*}
Applying this in the first two variables gives a fully mollified contribution and an $O(\e^r)$ error,
\begin{multline}\label{eq:interpol-Leps}
\|\langle\nabla_{x_{[2]}}\rangle^{-r}\langle\nabla_{v_{[2]}}\rangle^{-1}M_\e^{[3,m]}((f^{\otimes m})^{1-\delta}K_f[\bar C_{N,m+2}])\|_{L^2}\\
\,\lesssim\,\|M_\e^{[m]}((f^{\otimes m})^{1-\delta}K_f[\bar C_{N,m+2}])\|_{L^2}+\e^r\|(f^{\otimes m})^{1-\delta}K_f[\bar C_{N,m+2}]\|_{L^2}.
\end{multline}
For the first term, recalling the definition of the operator~$K_f[\cdot]$, cf.~\eqref{eq:def-VC}, and using H\"older's inequality and the Sobolev embedding $H^\gamma\subset L^{2d/(d-2\gamma)}$, similarly as in Step~1 of the proof of Proposition~\ref{prop:bnd-diff}, we can bound
\begin{multline*}
\|M_\e^{[m]}((f^{\otimes m})^{1-\delta}K_f[\bar C_{N,m+2}])\|_{L^2}\\
\,\le\,\Big(\|K\|_{L^{2d/(d+2\gamma)}(B)}\|f^\delta\|_{L^\infty_xH^{3/2}_v}+\big(\|K\|_{L^\infty(B^c)}+\|K\ast f\|_{L^\infty}\big)\|f^\delta\|_{L^2_xH^{3/2}_v}\Big)\\
\times\|f^\delta\nabla_v\log f\|_{L^2_xH^{3/2}_v}\|\langle\nabla_{x_{[m+1,m+2]}}\rangle^\gamma\langle\nabla_{v_{[m+1,m+2]}}\rangle^{-\frac32}M_\e^{[m]}((f^{\otimes m+2})^{1-\delta}\bar C_{N,m+2})\|_{L^2}.
\end{multline*}
Thus, by the assumptions on $K,f$ and the symmetry of $\bar C_{N,m+2}$,
\begin{equation*}
\|M_\e^{[m]}((f^{\otimes m})^{1-\delta}K_f[\bar C_{N,m+2}])\|_{L^2}
\,\lesssim\,\|\langle\nabla_{x_{[2]}}\rangle^\gamma\langle\nabla_{v_{[2]}}\rangle^{-\frac32}M_\e^{[3,m+2]}((f^{\otimes m+2})^{1-\delta}\bar C_{N,m+2})\|_{L^2}.
\end{equation*}
For the second term in~\eqref{eq:interpol-Leps}, we can simply bound
\begin{multline*}
\|(f^{\otimes m})^{1-\delta}K_f[\bar C_{N,m+2}]\|_{L^2}\\
\le\Big(\|K\|_{L^{2d/(d+2\gamma)}(B)}\|f^\delta\|_{L^\infty_xL^2_v}+\big(\|K\|_{L^\infty(B^c)}+\|K\ast f\|_{L^\infty}\big)\|f^\delta\|_{L^2}\Big)\\
\times \|f^\delta\nabla_v\log f\|_{L^2}\Big(\|(f^{\otimes m+1})^{1-\delta}\bar C_{N,m+2}\|_{L^2(\Dd;L^{2d/(d-2\gamma)}_xL^2_v(\Dd;L^2(\Dd^m)))}\\
+\|(f^{\otimes m+1})^{1-\delta}\bar C_{N,m+2}\|_{L^2}\Big),
\end{multline*}
and thus, by the assumptions on $K,f$, Jensen's inequality with weight $f$, and the mixed a priori estimates of Lemma~\ref{lem:apriori-L2},
\begin{equation*}
\|(f^{\otimes m})^{1-\delta}K_f[\bar C_{N,m+2}]\|_{L^2}
\lesssim\|\bar C_{N,m+2}\|_{L^{2d/(d-2\gamma)}(f^{\otimes2};L^2(\Dd^m))}
\lesssim N^{\frac{2\gamma}d}.
\end{equation*}
Combining these estimates into~\eqref{eq:interpol-Leps}, and arguing similarly for the second term in~\eqref{eq:contr-An1-dec}, the claim~\eqref{eq:todo-An2} follows.

\medskip\noindent
{\bf Step~2:} We control the remainder $A_m^1$ by showing that for all $\gamma\le r\le1$,
\begin{equation}\label{eq:todo-An1-re}
\|\langle\nabla_{x_{[2]}}\rangle^{-r}\langle\nabla_{v_{[2]}}\rangle^{-1}M_\e^{[3,m]}A_m^1\|_{L^2}
\,\lesssim\,m^2\Big(1+N^{-\frac12}\e^{-1-\gamma}+N^{\frac{2\gamma}d-1}\e^{-1-\frac d2}+N^{\frac{2\gamma}d-\frac12}\Big).
\end{equation}
Recall $A_m^1=(f^{\otimes m})^{1-\delta}R_{N,m}$, where $R_{N,m}$ is the remainder term defined in Lemma~\ref{lem:hier}.
The estimate~\eqref{eq:todo-An1-re} follows from a careful examination of all the terms entering the definition of $R_{N,m}$, using the assumptions on $K,f$ and the mixed a priori estimates of Lemma~\ref{lem:apriori-L2}.
For shortness, we focus again on the three typical contributions~$R_{N,m}^1,R_{N,m}^2,R_{N,m}^3$ in~\eqref{eq:def-RN123}.
For $R_{N,m}^1$, distinguishing the four cases $i,j\le2$, $i\le2<j$, $j\le2<i$, and~$i,j>2$, and commuting the velocity derivative with the weight, we get by symmetry
\[\|\langle\nabla_{x_{[2]}}\rangle^{-r}\langle\nabla_{v_{[2]}}\rangle^{-1}M_\e^{[3,m]}((f^{\otimes m})^{1-\delta}R_{N,m}^1)\|_{L^2}\\
\,\lesssim\,\sum_{j=1}^4T_m^j,\]
in terms of
\begin{eqnarray*}
T_m^1&:=&m^{-\frac12}N^{-\frac12}\Big\|\langle\nabla_{x_1}\rangle^{-r}\Big((f^{\otimes m})^{1-\delta} K(x_1-x_2)\bar C_{N,[2,m]}\Big)\Big\|_{L^2}\\
&&+m^{-\frac12}N^{-\frac12}\Big\|\langle\nabla_{x_1}\rangle^{-r}\Big(K(x_1-x_2)\bar C_{N,[2,m]}\cdot\nabla_{v_2}(f^{\otimes m})^{1-\delta}\Big)\Big\|_{L^2},\\
T_m^2&:=&m^\frac12N^{-\frac12}\Big\|M_\e^{[3,m]}\Big((f^{\otimes m})^{1-\delta} K(x_1-x_3)\bar C_{N,[m]\setminus\{3\}}\Big)\Big\|_{L^2}\\
&&+m^\frac12N^{-\frac12}\Big\|M_\e^{[3,m]}\Big(K(x_1-x_3)\bar C_{N,[m]\setminus\{3\}}\cdot\nabla_{v_1}(f^{\otimes m})^{1-\delta}\Big)\Big\|_{L^2},\\
T_m^3&:=&m^\frac12N^{-\frac12}\Big\|\langle\nabla_{x_1}\rangle^{-r}M_\e^{[3,m]}\,\Div_{v_3}\Big((f^{\otimes m})^{1-\delta} K(x_1-x_3) \bar C_{N,[m]\setminus\{1\}}\Big)\Big\|_{L^2}\\
&&+m^\frac12N^{-\frac12}\Big\|\langle\nabla_{x_1}\rangle^{-r}M_\e^{[3,m]}\Big(K(x_1-x_3) \bar C_{N,[m]\setminus\{1\}}\cdot\nabla_{v_3}(f^{\otimes m})^{1-\delta}\Big)\Big\|_{L^2},\\
T_m^4&:=&m^\frac32N^{-\frac12}\Big\|M_\e^{[3,m]}\,\Div_{v_4}\Big((f^{\otimes m})^{1-\delta} K(x_3-x_4) \bar C_{N,[m]\setminus\{3\}}\Big)\Big\|_{L^2}\\
&&+m^\frac32N^{-\frac12}\Big\|M_\e^{[3,m]}\Big( K(x_3-x_4) \bar C_{N,[m]\setminus\{3\}}\cdot\nabla_{v_4}(f^{\otimes m})^{1-\delta}\Big)\Big\|_{L^2}.
\end{eqnarray*}
Note that the convolution with $M_\e$ can be bounded as follows by Young's convolution inequality, for all $0\le q\le\frac d2$,
\begin{equation}\label{eq:bnd-Lconv}
\|M_\e\ast h\|_{L^2}\le\|M_\e\|_{L^{d/(d-q)}_xL^1_v}\|h\|_{L^{2d/(d+2q)}_xL^2_v}\lesssim\e^{-q}\|h\|_{L^{2d/(d+2q)}_xL^2_v}.
\end{equation}
Using this with $q=\gamma$, appealing to the Sobolev embedding $L^{2d/(d+2r)}_x\subset H^{-r}_x$, using the assumptions on $K,f$ and the a priori estimates of Lemma~\ref{lem:apriori-L2}, we get for all $r\ge\gamma$,
\begin{equation*}
\|\langle\nabla_{x_{[2]}}\rangle^{-r}\langle\nabla_{v_{[2]}}\rangle^{-1}M_\e^{[3,m]}((f^{\otimes m})^{1-\delta}R_{N,m}^1)\|_{L^2}\\
\,\lesssim\,
m^\frac32N^{-\frac12}\e^{-1-\gamma}.
\end{equation*}
For $R_{N,m}^2$, distinguishing again the cases \mbox{$i,j\le2$}, \mbox{$i\le2<j$}, \mbox{$j\le2<i$}, and~$i,j>2$, we get by symmetry
\begin{equation*}
\|\langle\nabla_{x_{[2]}}\rangle^{-r}\langle\nabla_{v_{[2]}}\rangle^{-1}M_\e^{[3,m]}((f^{\otimes m})^{1-\delta}R_{N,m}^2)\|_{L^2}
\,\lesssim\,\sum_{j=1}^4S_m^j,
\end{equation*}
in terms of
\begin{eqnarray*}
S_m^1&:=&
N^{-1}\Big\|\langle\nabla_{x_1}\rangle^{-r}\Big((f^{\otimes m})^{1-\delta}K(x_1-x_2)\bar C_{N,m}\Big)\Big\|_{L^2}\\
&&+N^{-1}\Big\|\langle\nabla_{x_1}\rangle^{-r}\Big(K(x_1-x_2)\bar C_{N,m}\cdot\nabla_{v_2}(f^{\otimes m})^{1-\delta}\Big)\Big\|_{L^2},\\
S_m^2&:=&mN^{-1}\Big\|M_\e^{[3,m]}\Big((f^{\otimes m})^{1-\delta}K(x_1-x_3)\bar C_{N,m}\Big)\Big\|_{L^2}\\
&&+mN^{-1}\Big\|M_\e^{[3,m]}\Big(K(x_1-x_3)\bar C_{N,m}\cdot\nabla_{v_1}(f^{\otimes m})^{1-\delta}\Big)\Big\|_{L^2},\\
S_m^3&:=&mN^{-1}\Big\|\langle\nabla_{x_1}\rangle^{-r}M_\e^{[3,m]}\,\Div_{v_3}\Big((f^{\otimes m})^{1-\delta}K(x_1-x_3)\bar C_{N,m}\Big)\Big\|_{L^2}\\
&&+mN^{-1}\Big\|\langle\nabla_{x_1}\rangle^{-r}M_\e^{[3,m]}\,\Big(K(x_1-x_3)\bar C_{N,m}\cdot\nabla_{v_3}(f^{\otimes m})^{1-\delta}\Big)\Big\|_{L^2},\\
S_m^4&:=&m^2N^{-1}\Big\|M_\e^{[3,m]}\,\Div_{v_4}\Big((f^{\otimes m})^{1-\delta}K(x_3-x_4)\bar C_{N,m}\Big)\Big\|_{L^2}\\
&&+m^2N^{-1}\Big\|M_\e^{[3,m]}\,\Big(K(x_3-x_4)\bar C_{N,m}\cdot\nabla_{v_4}(f^{\otimes m})^{1-\delta}\Big)\Big\|_{L^2}.
\end{eqnarray*}
Using again~\eqref{eq:bnd-Lconv} and the Sobolev embedding $L^{2d/(d+2r)}_x\subset H^{-r}_x$, we can bound for all~$r\ge\gamma$ and $0\le q\le\frac d2$,
\begin{multline*}
\|\langle\nabla_{x_{[2]}}\rangle^{-r}\langle\nabla_{v_{[2]}}\rangle^{-1}M_\e^{[3,m]}((f^{\otimes m})^{1-\delta}R_{N,m}^2)\|_{L^2}\\
\,\lesssim\,
N^{-1}\Big\|(f^{\otimes m})^{1-\delta}K(x_1-x_2)\bar C_{N,m}\Big\|_{L^2_{x_2}L^{2d/(d+2r)}_{x_1}L^2_{\ne x_1,x_2}}\\
+mN^{-1}\Big\|K(x_1-x_2)\bar C_{N,m}\cdot\nabla_{v_2}(f^{\otimes m})^{1-\delta}\Big\|_{L^2_{x_2}L^{2d/(d+2r)}_{x_1}L^2_{\ne x_1,x_2}}\\
+m^2N^{-1}\e^{-1-q}\Big\|(f^{\otimes m})^{1-\delta}K(x_1-x_2)\bar C_{N,m}\Big\|_{L^2_{x_2}L^{2d/(d+2q)}_{x_1}L^2_{\ne x_1,x_2}}\\
+m^2N^{-1}\e^{-q}\Big\|K(x_1-x_2)\bar C_{N,m}\cdot\nabla_{v_2}(f^{\otimes m})^{1-\delta}\Big\|_{L^2_{x_2}L^{2d/(d+2q)}_{x_1}L^2_{\ne x_1,x_2}},
\end{multline*}
and thus, using the assumptions on $K,f$, Jensen's inequality with weight $f$, and the mixed a priori estimates of Lemma~\ref{lem:apriori-L2},
\begin{eqnarray*}
\lefteqn{\|\langle\nabla_{x_{[2]}}\rangle^{-r}\langle\nabla_{v_{[2]}}\rangle^{-1}M_\e^{[3,m]}((f^{\otimes m})^{1-\delta}R_{N,m}^2)\|_{L^2}}\\
&\lesssim&mN^{-1}\|\bar C_{N,m}\|_{L^\infty(\Dd^2;L^2(f^{\otimes m-2}))}
+m^2N^{-1}\e^{-1-q}\|\bar C_{N,m}\|_{L^{d/(q-\gamma)}(f^{\otimes2};L^2(f^{\otimes m-2}))}\\
&\lesssim&1+m^2\e^{-1-q}N^{-\frac2d(q-\gamma)}.
\end{eqnarray*}
Choosing $q=\frac d2$ gives
\begin{equation*}
\|\langle\nabla_{x_{[2]}}\rangle^{-r}\langle\nabla_{v_{[2]}}\rangle^{-1}M_\e^{[3,m]}((f^{\otimes m})^{1-\delta}R_{N,m}^2)\|_{L^2}
\,\lesssim\,1+m^2\e^{-1-\frac d2}N^{\frac{2\gamma}d-1}.
\end{equation*}
Finally, for $R_{N,m}^3$, we can simply bound
\begin{multline*}
\|\langle\nabla_{x_{[2]}}\rangle^{-r}\langle\nabla_{v_{[2]}}\rangle^{-1}M_\e^{[3,m]}((f^{\otimes m})^{1-\delta}R_{N,m}^3)\|_{L^2}\\
\,\lesssim\,m^\frac32N^{-\frac12}\Big\|(f^{\otimes m})^{1-\delta}\int_\Dd K_f(z_*,z_1)\,\bar C_{N,[m]\cup\{*\}}\, f(z_*)\,dz_*\Big\|_{L^2}.
\end{multline*}
Thus, using again the assumptions on $K,f$, Jensen's inequality with weight $f$, and the mixed a priori estimates of Lemma~\ref{lem:apriori-L2}, we obtain that
\begin{multline*}
\|\langle\nabla_{x_{[2]}}\rangle^{-r}\langle\nabla_{v_{[2]}}\rangle^{-1}M_\e^{[3,m]}((f^{\otimes m})^{1-\delta}R_{N,m}^3)\|_{L^2}\\
\,\lesssim\,m^\frac32N^{-\frac12}\|\bar C_{N,m+1}\|_{L^{2d/(d-2\gamma)}(f^{\otimes2};L^2(f^{\otimes m-1}))}
\,\lesssim\,m^\frac32N^{\frac{2\gamma}d-\frac12}.
\end{multline*}
The other terms in $R_{N,m}$ are handled in the same way, proving~\eqref{eq:todo-An1-re}.

\medskip\noindent
{\bf Step 3:} Conclusion.\\
Inserting the estimates of Steps~1 and~2 into~\eqref{eq:estim-averaging-Dn}, we deduce for all~$\gamma\le r\le1$,
\begin{multline}\label{eq:estim-CNm-rec}
\bigg(\int_0^T\||\nabla_{x_{[2]}}|^{\frac{1-r}4}\langle\nabla_{v_{[2]}}\rangle^{-\frac32}M_\e^{[3,m]}((f^{\otimes m})^{1-\delta}\bar C_{N,m})\|_{L^2}^2\,dt\bigg)^\frac12\\[-1mm]
\lesssim (T+1)^\frac12(m+1)^2\Big(1+\e^rN^\frac{2\gamma}d+N^{-\frac12}\e^{-1-\gamma}+N^{\frac{2\gamma}d-1}\e^{-1-\frac d2}+N^{\frac{2\gamma}d-\frac12}\Big)\\
+(m+1)\bigg(\int_0^T\|\langle\nabla_{x_{[2]}}\rangle^\gamma\langle\nabla_{v_{[2]}}\rangle^{-\frac32}M_\e^{[3,m+2]}((f^{\otimes m+2})^{1-\delta}\bar C_{N,m+2})\|_{L^2}^2\bigg)^\frac12\\
+m\bigg(\int_0^T\|\langle\nabla_{x_{[2]}}\rangle^\gamma\langle\nabla_{v_{[2]}}\rangle^{-\frac32}M_\e^{[3,m]}((f^{\otimes m})^{1-\delta}\bar C_{N,m})\|_{L^2}^2\bigg)^\frac12.
\end{multline}
Provided
\begin{equation}\label{eq:cond-rgamma}
\gamma\le\frac d4\wedge1\qquad\text{and}\qquad\frac{2\gamma}{rd}\le\frac{1}{2(1+\gamma)}\wedge\frac{1-2\gamma/d}{1+d/2},
\end{equation}
we can choose $\e=N^{-\theta}$ for some $\theta>0$ so that the first right-hand side term is uniformly bounded by $(T+1)(m+1)^2$ uniformly in~$N$.
It remains to absorb the last two terms.
To this aim, we assume in addition
\begin{equation}\label{assumptiongammar}
\gamma<\frac{1-r}4.
\end{equation}
By interpolation and by the a priori $L^2$-estimate of Lemma~\ref{lem:apriori-L2}, we can then bound
\begin{eqnarray*}
\lefteqn{\bigg(\int_0^T\|\langle\nabla_{x_{[2]}}\rangle^\gamma\langle\nabla_{v_{[2]}}\rangle^{-\frac32}M_\e^{[3,m]}((f^{\otimes m})^{1-\delta}\bar C_{N,m})\|_{L^2}^2\bigg)^\frac12}\\
&\le&\bigg(\int_0^T\|\langle\nabla_{x_{[2]}}\rangle^\frac{1-r}4\langle\nabla_{v_{[2]}}\rangle^{-\frac32}M_\e^{[3,m]}((f^{\otimes m})^{1-\delta}\bar C_{N,m})\|_{L^2}^{2\beta}\bigg)^\frac12\\
&\le& T^{\frac12(1-\beta)}\bigg(\int_0^T\|\langle\nabla_{x_{[2]}}\rangle^\frac{1-r}4\langle\nabla_{v_{[2]}}\rangle^{-\frac32}M_\e^{[3,m]}((f^{\otimes m})^{1-\delta}\bar C_{N,m})\|_{L^2}^2\bigg)^\frac\beta2,
\end{eqnarray*}
with $\beta:=\frac{4\gamma}{1-r}<1$.
Using this to bound the last two terms in~\eqref{eq:estim-CNm-rec}, and setting for abbreviation
\begin{equation*}
X_{N,m}:=\bigg(\int_0^T\big\|\langle\nabla_{x_{[2]}}\rangle^\gamma\langle\nabla_{v_{[2]}}\rangle^{-\frac32}M_\e^{[3,m]}((f^{\otimes m})^{1-\delta}\bar C_{N,m})\big\|_{L^2}^2\bigg)^\frac12,
\end{equation*}
we obtain
\begin{equation*}
X_{N,m}\lesssim (T+1)^\frac12(m+1)^2+(m+1)T^{\frac12(1-\beta)}X_{N,m+2}^\beta+mT^{\frac12(1-\beta)}X_{N,m}^\beta.
\end{equation*}
Hence, by Young's inequality,
\begin{equation*}
X_{N,m}\le C_\beta(T+1)^{\frac12}(m+1)^{C_\beta}+\frac14X_{N,m+2}+\frac12 X_{N,m}.
\end{equation*}
Absorbing the last term and iterating the estimate for $m\le N$, we are led to
\begin{equation*}
X_{N,m}\le C_\beta(T+1)^{\frac12}(m+1)^{C_\beta},
\end{equation*}
and the conclusion follows. Note that the choice $\gamma<\gamma_*$ precisely ensures that~\eqref{eq:cond-rgamma} and~\eqref{assumptiongammar} both hold for some $0\le r<1$.
\end{proof}


\section{Mean-field limit}
The kinetic regularity estimates established in the previous two sections allow us to pass to the limit in the hierarchy for dual correlations.
To conclude the mean-field limit, it then remains to show that the resulting limiting hierarchy uniquely determines the extracted subsequential limit.

\subsection{Weak compactness}
We start by passing to the limit in the hierarchy for dual correlations, using the kinetic regularity estimates established in Propositions~\ref{prop:bnd-diff} and~\ref{prop:bnd-nodiff}. For~$K\in L^{2d/(d+2\gamma)}_\loc(\R^d)$, we emphasize that the interaction term $K_f[\bar C_{m+2}]$ in the limiting hierarchy~\eqref{eq:limhier1} is not a priori well-defined: it only makes sense thanks to~\eqref{eq:limhier4}.

\begin{lem}\label{lem:limhier}
Assume the hypotheses of Proposition~\ref{prop:bnd-diff} if $\alpha>0$, those of Proposition~\ref{prop:bnd-nodiff} if~$\alpha=0$, and further assume that the Vlasov solution $f$ satisfies the positivity condition~\eqref{eq:reg-req-0b}.
Up to a subsequence, as $N\uparrow\infty$, we have for all $m\ge0$,
\begin{equation*}
\bar C_{N,m}\overset*\rightharpoonup\bar C_m\qquad\text{weakly-* in $L^\infty(0,T;L^2(f^{\otimes m}))$},
\end{equation*}
for some limit $(\bar C_m)_m$ that satisfies the following estimates:
\begin{equation}\label{eq:limhier3}
\sup_{[0,T]}\|\bar C_m\|_{L^2(f^{\otimes m})}\le C,
\end{equation}
as well as
\begin{equation}\label{eq:limhier4}
\begin{array}{rlll}
\int_0^T\||\nabla_{x_1}|^\gamma((f^{\otimes m})^{1-\delta}\bar C_m)\|_{L^2}^2&\le& C(T+1)(m+1)^c,&\quad\text{if $\alpha>0$},\\[1mm]
\int_0^T\||\nabla_{x_1}|^\gamma\langle\nabla_{v_{[2]}}\rangle^{-\frac32}((f^{\otimes m})^{1-\delta}\bar C_m)\|_{L^2}^2&\le&C(T+1)(m+1)^c,&\quad\text{if $\alpha=0$}.
\end{array}
\end{equation}
Moreover, the limit satisfies the following limiting hierarchy in the weak sense on~$[0,T]$: for all~$m\ge0$,
\begin{equation}\label{eq:limhier1}
\partial_t\bar C_{m}+L_f^m\bar C_{m}=\sqrt{(m+1)(m+2)}\,K_f[\bar C_{m+2}],
\end{equation}
where $L_f^m,K_f[\cdot]$ are defined in Lemma~\ref{lem:hier},
with final data
\begin{equation}\label{eq:limhier2}
\bar C_m|_{t=T}=\mathds1_{m=0}.
\end{equation}
\end{lem}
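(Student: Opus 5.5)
The argument is a compactness step followed by passage to the limit in each term of the hierarchy~\eqref{eq:lemhier}, tested against smooth compactly supported functions.

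\emph{Compactness and bounds.} Lemma~\ref{lem:apriori-L2} with $k=0$ gives $\sup_{[0,T]}\|\bar C_{N,m}\|_{L^2(f^{\otimes m})}\le C$ uniformly in $N$. Since $L^\infty(0,T;L^2(f^{\otimes m}))$ is the dual of a separable space, Banach--Alaoglu and a diagonal extraction over $m\in\N$ give one subsequence along which $\bar C_{N,m}\overset*\rightharpoonup\bar C_m$ for every $m$. The bound~\eqref{eq:limhier3} then follows from weak-* lower semicontinuity.

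For~\eqref{eq:limhier4} I would argue as follows. Multiplication by the bounded weight $(f^{\otimes m})^{1-\delta}$ maps $L^2(f^{\otimes m})$ continuously into $L^2$, since $f^{1-\delta}\lesssim f^{1/2}$. Hence $(f^{\otimes m})^{1-\delta}\bar C_{N,m}$ converges weakly in $L^2([0,T]\times\Dd^m)$. The quadratic quantities in Propositions~\ref{prop:bnd-diff} and~\ref{prop:bnd-nodiff} are weakly lower semicontinuous, because they are norms of images under closed Fourier multipliers. In the case $\alpha=0$ the mollifier $M_\e^{[3,m]}$ with $\e=N^{-\theta}\to0$ converges strongly to the identity and commutes with the multipliers in $(z_1,z_2)$. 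For a fixed test function the pairing therefore passes to the limit, and duality gives the bound without mollifier. The homogeneous norm $|\nabla_{x_1}|^\gamma$ versus $|\nabla_{x_{[2]}}|^\gamma$ is handled by symmetry, or by the fact that $|\nabla_{x_1}|^\gamma$ is dominated by $|\nabla_{x_{[2]}}|^\gamma$.

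\emph{Passing to the limit.} I would test~\eqref{eq:lemhier} against $\psi\in C^\infty_c([0,T)\times\Dd^m)$, written as $\psi=(f^{\otimes m})\phi$ so that the pairing is in $L^2(f^{\otimes m})$. The positivity assumption~\eqref{eq:reg-req-0b} ensures that $f^{-1}$ is locally bounded on $\operatorname{supp}\psi$, so all weights are harmless there.

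The linear part $L_f^m$ passes to the limit by duality. Its adjoint applied to $\psi$ involves $v\cdot\nabla_x\psi$, $\alpha\triangle_v\psi$, $(K\ast f)\cdot\nabla_v\psi$, and the nonlocal term with kernel $K_f$. In this last term $K$ appears integrated against $f\psi$, which is fine as long as $\psi$ is paired using the mixed integrability of Lemma~\ref{lem:apriori-L2}, or better, using the regularity~\eqref{eq:limhier4} as below.

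The final data are identified from $\Phi_N^T$: assumption~\eqref{eq:conv-PhiNT-ass} gives $\sum_{m\ge1}\|\bar C_{N,m}(T)\|^2_{L^2(f^{\otimes m})}=\|\Phi_N^T-\Pi_{N,0}[\Phi_N^T]\|^2_{L^2(f^{\otimes N})}\to0$ and $\Pi_{N,0}[\Phi_N^T]\to1$. The weak formulation with $\psi$ not vanishing at $t=T$ then yields~\eqref{eq:limhier2}.

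The remainder $R_{N,m}$ must be shown to vanish when tested against $\psi$. Each term carries a factor $N^{-1/2}$ or $N^{-1}$ in front of objects bounded by the mixed a priori estimates of Lemma~\ref{lem:apriori-L2}. The velocity derivatives are moved onto $\psi$, and the singular factor $K(x_i-x_j)$ is integrated against $\psi$ using $K\in L^{2d/(d+2\gamma)}_\loc$. The estimates are as in Step~2 of the proofs of Propositions~\ref{prop:bnd-diff} and~\ref{prop:bnd-nodiff}, but now with smooth test functions, so no mollifier is needed. The worst growth is of order $N^{2\gamma/d-1/2}$, which tends to $0$ since $\gamma<d/4$.

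\emph{Main obstacle: the interaction term.} The hard step is the convergence of $\sqrt{(m+1)(m+2)}\big(\tfrac{(N-m)(N-m-1)}{(N-1)^2}\big)^{1/2}K_f[\bar C_{N,m+2}]$. The prefactor converges, but $K_f[\cdot]$ is not continuous on $L^2(f^{\otimes m+2})$: it involves $K(x'-x'')$ with $K\notin L^2_\loc$. Tested against $\psi$, this term is the pairing of $(f^{\otimes m+2})^{1-\delta}\bar C_{N,m+2}$ with
\[
G:=\psi\,K(x'-x'')\,\nabla_v\log f(z')\,f(z')^{\delta}f(z'')^{\delta},
\]
minus the smoother $K\ast f$ part.

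I would split $K=K\mathds1_{|K|\le A}+K\mathds1_{|K|>A}$. For the bounded part, $G$ lies in $L^2$ by~\eqref{eq:reg-req-2} or~\eqref{eq:reg-req-1}, so weak convergence applies directly. For the singular part, $\|K\mathds1_{|K|>A}\|_{L^{2d/(d+2\gamma)}(B)}\to0$ as $A\to\infty$. The Hölder--Sobolev estimate of Step~1 of the proof of Proposition~\ref{prop:bnd-diff} then bounds this contribution by $o_A(1)$ times the uniform bound on $\int_0^T\|\langle\nabla_{x_1}\rangle^\gamma(\cdots\bar C_{N,m+2})\|^2_{L^2}$.

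For $\alpha=0$ the bound only controls $\langle\nabla_{v_{[2]}}\rangle^{-3/2}$ of the weighted correlation. I would therefore place the $H^{3/2}_v$ norms on the $f^\delta$, $f^\delta\nabla_v\log f$ and $\psi$ factors, using~\eqref{eq:reg-req-1}, exactly as in Step~1 of the proof of Proposition~\ref{prop:bnd-nodiff}. The mollification $M_\e^{[3,m+2]}$ is removed in the limit using the smoothness of $\psi$ in the background variables. This uniform smallness of the singular part, taken before letting $N\uparrow\infty$, gives convergence to $K_f[\bar C_{m+2}]$. Since the same estimate applies to the limit thanks to~\eqref{eq:limhier4}, it also shows that $K_f[\bar C_{m+2}]$ is well defined.
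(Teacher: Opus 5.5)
Your architecture matches the paper's proof: weak-* compactness from Lemma~\ref{lem:apriori-L2}, lower semicontinuity for~\eqref{eq:limhier4}, vanishing of $R_{N,m}$ at worst like $N^{2\gamma/d-1/2}$, final data from~\eqref{eq:conv-PhiNT-ass}, and the regularity bounds to handle $K_f[\bar C_{N,m+2}]$. Your truncation of $K$ is a hands-on version of the paper's weak continuity of the bounded map $g\mapsto\int K_f(z',z'')f(z')^\delta f(z'')^\delta g(\cdot,z',z'')\,dz'dz''$. For $\alpha>0$ the argument goes through.

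For $\alpha=0$, however, the step ``remove the mollification using the smoothness of $\psi$ in the background variables'' fails, because your formula for $G$ drops a weight. Test~\eqref{eq:lemhier} against $\psi$ and set $g_N:=(f^{\otimes m+2})^{1-\delta}\bar C_{N,m+2}$. Then
\[
\int\psi\,K_f[\bar C_{N,m+2}]=\int\psi(z_{[m]})\,(f^{\otimes m})^{\delta-1}(z_{[m]})\,K_f(z',z'')\,f(z')^\delta f(z'')^\delta\,g_N\,dz_{[m]}\,dz'dz''.
\]
So the factor in $z_{[m]}$ is $\psi(f^{\otimes m})^{\delta-1}$, or $\phi(f^{\otimes m})^{\delta}$ in your $L^2(f^{\otimes m})$ pairing. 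By~\eqref{eq:reg-req-0b} it is bounded with compact support, which is why $\alpha>0$ is fine, but it is not smooth, since no $x$-regularity of $f$ is assumed.

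Proposition~\ref{prop:bnd-nodiff} only controls $M_\e^{[m]}g_N$. To use it you must move $M_\e^{[m]}$ onto the background factor, i.e.\ write that factor as $(M_\e^{[m]})^*$ of a controlled function. This works for a mollified smooth test function but not for $(M_\e^{[m]}\psi)(f^{\otimes m})^{\delta-1}$. The other option is to compare $g_N$ with $M_\e^{[m]}g_N$ directly. That means pairing the singular kernel with unmollified $g_N$, which by Lemma~\ref{lem:apriori-L2} costs $N^{2\gamma/d}$. You would then need the rate $\|(1-M_\e^{[m]})^*(\psi(f^{\otimes m})^{\delta-1})\|_{L^2}=o(N^{-2\gamma/d})$, which is not available.

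The missing idea is to pass to the limit in the equation for the weighted correlation $(f^{\otimes m})^{1-\delta}\bar C_{N,m}$, cf.~\eqref{eq:rewr-Cnn-fn}, rather than the one for $\bar C_{N,m}$. Its interaction term $(f^{\otimes m})^{1-\delta}K_f[\bar C_{N,m+2}]=\int K_f(z',z'')f(z')^\delta f(z'')^\delta g_N(\cdot,z',z'')\,dz'dz''$ carries no weight in $z_{[m]}$, so it commutes with $M_\e^{[m]}$, cf.~\eqref{eq:conv-V-re}. One then replaces the smooth test function $H_m$ by $M_\e^{[m]}H_m$. This costs nothing in the other terms, since $M_\e^{[m]}H_m\to H_m$ in every norm used, and it needs no rate in $\e$. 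The weight is removed only at the end, using~\eqref{eq:reg-req-0b} and the fact that for $\alpha=0$ the factor $f^{1-\delta}$ is transported by the Vlasov flow.

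A minor point: to recover~\eqref{eq:limhier2}, your test functions must be supported in $(0,T]$, not $[0,T)$.
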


\begin{proof}
From the a priori $L^2$-estimates of Lemma~\ref{lem:apriori-L2}, weak compactness ensures that, up to a subsequence as $N\uparrow\infty$, we indeed have for all $m\ge0$,
\begin{equation}\label{eq:apriori-L2-conv}
\bar C_{N,m}\overset*\cvf\bar C_m\quad\text{in $L^\infty(0,T;L^2(f^{\otimes m}))$},
\end{equation}
for some limit $(\bar C_m)_m$.
As all our estimates hold with weight $f$, we shall first show that the limit satisfies the limiting hierarchy~\eqref{eq:limhier1}--\eqref{eq:limhier2} in the following {\it weighted} weak sense: for all $m\ge0$ and $H_m\in C^\infty_c((0,T]\times\Dd^m)$,
\begin{multline}\label{eq:weak-weighted-form}
\int_0^T\int_{\Dd^m} \bar C_mf^{\otimes m}(\partial_t+\hat L_f^m)H_m\\
+\sqrt{(m+1)(m+2)}\int_0^T\int_{\Dd^{m+2}} H_m K_f[\bar C_{m+2}] f^{\otimes m}
=H_0(T)\mathds1_{m=0},
\end{multline}
where $\hat L_f^m$ stands for the following adjoint operator on the $m$-particle space,
\begin{eqnarray}
\hat L_f^m&:=&\sum_{j=1}^m\Id^{\otimes j-1}\otimes \hat L_f\otimes\Id^{\otimes m-j},\nonumber\\
\hat L_fh&:=&v\cdot\nabla_{x}h-\alpha\triangle_{v}h+(K\ast f)(x)\cdot\nabla_{v}h-2\alpha\nabla_{v}\log f(z)\cdot\nabla_{v}h\nonumber\\
&&+\int_\Dd K_f(z,z_*)h(z_*)f(z_*)dz_*.\label{eq:def-hat-Lf}
\end{eqnarray}
Using that $f$ satisfies the Vlasov equation and using the positivity requirement~\eqref{eq:reg-req-0b} for~$f$, this easily implies that $(\bar C_m)_m$ satisfies~\eqref{eq:limhier1}--\eqref{eq:limhier2} in the usual weak sense.

We split the proof of~\eqref{eq:weak-weighted-form} into three steps: we start by showing that the remainder term $R_{N,m}$ in the hierarchy of equations for dual correlations derived in Lemma~\ref{lem:hier} tends to $0$ in the sense of distributions, and then we conclude the proof, separately considering the cases $\alpha>0$ and $\alpha=0$.

\medskip\noindent
{\bf Step~1:} Remainder estimate: for all $m\ge0$,
\begin{equation}\label{eq:conv-RNm}
f^{\otimes m}R_{N,m}\to0\quad\text{in $\Dc'([0,T]\times\Dd^m)$}.
\end{equation}
For shortness, we focus again on the three typical terms~$R_{N,m}^1,R_{N,m}^2,R_{N,m}^3$ considered in~\eqref{eq:def-RN123}.
Integrating by parts and using the assumptions on $K,f$, we easily find for any smooth symmetric test function $H_m\in C^\infty_c([0,T]\times\Dd^m)$,
\begin{eqnarray*}
\Big|\int_0^T\int_{\Dd^m} H_mR_{N,m}^1f^{\otimes m}\Big|
&\lesssim_m&N^{-\frac12}\int_0^T\|\bar C_{N,m-1}\|_{L^2(f^{\otimes m-1})},\\
\Big|\int_0^T\int_{\Dd^m} H_mR_{N,m}^2f^{\otimes m}\Big|
&\lesssim_m&N^{-1}\int_0^T\|\bar C_{N,m}\|_{L^{2d/(d-2\gamma)}(f^{\otimes2};L^2(f^{\otimes m-2}))},\\
\Big|\int_0^T\int_{\Dd^m} H_mR_{N,m}^3f^{\otimes m}\Big|
&\lesssim_m&N^{-\frac12}
\int_0^T \|\bar C_{N,m+1}\|_{L^{2d/(d-2\gamma)}(f^{\otimes2};L^2(f^{\otimes m-1}))}.
\end{eqnarray*}
By the mixed a priori estimates of Lemma~\ref{lem:apriori-L2}, all three right-hand sides tend to $0$ as~$\gamma<\frac d4$, thus proving~\eqref{eq:conv-RNm}.

\medskip\noindent
{\bf Step~2:} Case $\alpha>0$.\\
Proposition~\ref{prop:bnd-diff} further implies, along the extracted subsequence~\eqref{eq:apriori-L2-conv}, for all~$m\ge0$,
\begin{equation*}
(f^{\otimes m})^{1-\delta}\bar C_{N,m}\cvf (f^{\otimes m})^{1-\delta}\bar C_m\quad\text{in $L^2(0,T;H^\gamma_{x}L^2_{v}(\Dd;L^2(\Dd^{m-1})))$.}
\end{equation*}
By definition of the operator $K_f[\cdot]$, cf.~\eqref{eq:def-VC}, and by the assumptions on $K,f$, this entails for all $m\ge0$,
\begin{equation}\label{eq:conv-V}
(f^{\otimes m})^{1-\delta}K_f[\bar C_{N,m+2}]\cvf (f^{\otimes m})^{1-\delta}K_f[\bar C_{m+2}]\quad\text{in $L^2(0,T;L^2(\Dd^{m}))$}.
\end{equation}
This allows us to pass to the limit in the hierarchy of equations for dual correlations. Indeed, as $f$ satisfies the Vlasov equation, the weak formulation of the hierarchy derived in Lemma~\ref{lem:hier} can be written as follows, for all $0\le m\le N$ and $H_m\in C^\infty_c((0,T]\times\Dd^m)$,
\begin{multline}\label{eq:weak-weight-CNm}
\int_0^T\int_{\Dd^m}\bar C_{N,m}f^{\otimes m}(\partial_t+\hat L_f^m)H_m+\int_0^T\int_{\Dd^m}H_mR_{N,m}f^{\otimes m}\\
+\sqrt{(m+1)(m+2)}\Big(\frac{(N-m)(N-m-1)}{(N-1)^2}\Big)^\frac12\int_0^T\int_{\Dd^{m}}H_mK_f[\bar C_{N,m+2}] f^{\otimes m}\\
=\int H_m(T)\bar C_{N,m}^Tf(T)^{\otimes m}.
\end{multline}
Using~\eqref{eq:apriori-L2-conv}, \eqref{eq:conv-RNm}, \eqref{eq:conv-V}, and recalling that the choice~\eqref{eq:conv-PhiNT-ass} of final data ensures
\[\bar C_{N,m}^T\to\mathds1_{m=0}\quad\text{in $L^2(\Dd^m)$,}\]
we conclude that the limit $(\bar C_m)_m$ satisfies the weighted weak formulation~\eqref{eq:weak-weighted-form} of the limiting dual hierarchy~\eqref{eq:limhier1}--\eqref{eq:limhier2}.
Finally, the kinetic regularity estimate~\eqref{eq:limhier4} follows from Proposition~\ref{prop:bnd-diff} by lower semicontinuity.

\medskip\noindent
{\bf Step~3:} Case $\alpha=0$.\\
As $M_\e\to\Id$ in the strong operator topology on $L^2(\Dd)$ as $\e\downarrow0$, it follows from~\eqref{eq:apriori-L2-conv} and from the boundedness of $f$ that, along the extracted subsequence,
\[M_\e^{[3,m]}((f^{\otimes m})^{1-\delta}\bar C_{N,m})\overset*\cvf (f^{\otimes m})^{1-\delta}\bar C_m\quad\text{in $L^\infty(0,T;L^2(\Dd^m))$,}\]
and Proposition~\ref{prop:bnd-nodiff} then further implies
\begin{equation}\label{eq:conv-reg-Meps-CNm}
M_\e^{[3,m]}((f^{\otimes m})^{1-\delta}\bar C_{N,m})\rightharpoonup(f^{\otimes m})^{1-\delta}\bar C_m\quad\text{in $L^2(0,T;H^{\gamma}_{x}H^{-3/2}_{v}(\Dd^2;L^2(\Dd^{m-2})))$}.
\end{equation}
By definition of the operator $K_f[\cdot]$, cf.~\eqref{eq:def-VC}, and by the assumptions on $K,f$, this entails for all $m\ge0$,
\begin{equation}\label{eq:conv-V-re}
M_\e^{[m]}((f^{\otimes m})^{1-\delta}K_f[\bar C_{N,m+2}])\rightharpoonup (f^{\otimes m})^{1-\delta}K_f[\bar C_{m+2}]\quad\text{in $L^2(0,T;L^2(f^{\otimes m}))$}.
\end{equation}
Together with~\eqref{eq:apriori-L2-conv}, \eqref{eq:conv-RNm}, and with the choice~\eqref{eq:conv-PhiNT-ass} of final data, this allows to pass to the limit in~\eqref{eq:weak-weight-CNm} with test function $H_m$ replaced by $M_\e^{[m]}H_m$, which then implies that the limit $(\bar C_m)_m$ satisfies the weighted weak formulation~\eqref{eq:weak-weighted-form} of the limiting hierarchy~\eqref{eq:limhier1}--\eqref{eq:limhier2}. Finally, the kinetic regularity estimate~\eqref{eq:limhier4} follows from Proposition~\ref{prop:bnd-nodiff} together with~\eqref{eq:conv-reg-Meps-CNm} by lower semicontinuity.
\end{proof}

\subsection{Uniqueness for the limiting dual hierarchy}\label{sec:uniqueness}
A direct uniqueness argument for the limiting dual hierarchy~\eqref{eq:limhier1} would be straightforward if $\bar C_m$ were controlled in the space $L^{2d/(d-2\gamma)}(f^{\otimes m})$. The kinetic regularity estimate~\eqref{eq:limhier4}, however, only yields a fixed gain of spatial regularity in one particle variable at a time. Combining these one-particle estimates into an isotropic Sobolev estimate in the $m$-particle space would only give an~$O(\frac1m)$ gain of spatial integrability for~$\bar C_m$, which degenerates as $m\uparrow\infty$ and is thus a priori insufficient to control the singular interaction operator uniformly along the hierarchy.

This is reminiscent of the hierarchical derivation of the quantum mean-field limit for bosons with Coulomb interactions in~\cite{Bardos-Golse-Mauser-00,Erdos-Yau-01}. The central ingredient in~\cite{Erdos-Yau-01} is the propagation of an iterative Sobolev norm controlling one derivative in every particle variable simultaneously. Such tensorized regularity can be derived from energy estimates based on the conservation of the many-body Hamiltonian and is strong enough to close a direct uniqueness argument for the limiting quantum hierarchy. No analogous tensorized estimate appears to be available in the present classical kinetic setting, where regularity is produced by hypoellipticity or velocity averaging.

We instead exploit the explicit triangular structure of the limiting hierarchy through its Duhamel expansion. In each iterated term, every occurrence of the singular interaction kernel appears to be integrated in time along the linearized mean-field propagator. Kinetic regularization can therefore be applied to this integrated kernel, while all remaining particle variables are propagated merely in $L^2$. This replaces the tensorized regularity that is unavailable for the classical hierarchy.
Note that for this uniqueness result the restriction on $\gamma$, cf.~\eqref{eq:red-gamma}, is much weaker than for the uniform-in-$N$ regularity estimates of the previous sections; see also Remark~\ref{rem:restr-gamma-uniqueness} below.

\begin{prop}\label{lem:unique}
Let $\alpha\ge0$, and let
\[K\in L^{2d/(d+2\gamma)}_\loc(\R^d),\qquad\sup_{|x|\ge1}|K(x)|<\infty,\]
for some
\begin{equation}\label{eq:red-gamma}
0\le\gamma\le\left\{\begin{array}{lll}
1/3&:&\alpha>0,\\
1/4&:&\alpha=0.
\end{array}\right.
\end{equation}
Also assume that the mean-field solution $f$ satisfies the regularity conditions~\eqref{eq:reg-req-0}, \eqref{eq:reg-req-0b}, together with~\eqref{eq:reg-req-2}--\eqref{eq:reg-req-2b} if $\alpha>0$, and with~\eqref{eq:reg-req-1} if $\alpha=0$, for some~$\delta\in(0,\frac12]$.
Let then~$(C_m)_m$ satisfy the limiting dual hierarchy~\eqref{eq:limhier1} in the weak sense on $[0,T]$,
\begin{equation}\label{eq:limhier-re-1}
\partial_tC_{m}+L_f^mC_{m}=\sqrt{(m+1)(m+2)}\,K_f[C_{m+2}],
\end{equation}
with trivial final data
\[C_m|_{t=T}=0\quad\text{for all $m\ge0$},\]
with $C_m$ symmetric,
and with the following a priori estimates: for some $L\ge1$,
\begin{equation}\label{eq:limhier-ap1}
\sup_{[0,T]}\|C_m\|_{L^2(f^{\otimes m})}\le L,
\end{equation}
as well as
\begin{equation}\label{eq:limhier-ap2}
\begin{array}{rlll}
\int_0^T\||\nabla_{x_1}|^\gamma((f^{\otimes m})^{1-\delta}C_m)\|_{L^2}^2&\le& L^{m+1},&\quad\text{if $\alpha>0$},\\[1mm]
\int_0^T\||\nabla_{x_1}|^\gamma\langle\nabla_{v_{[2]}}\rangle^{-\frac32}((f^{\otimes m})^{1-\delta}C_m)\|_{L^2}^2&\le&L^{m+1},&\quad\text{if $\alpha=0$}.
\end{array}
\end{equation}
Then
\[C_m\equiv0\quad\text{on $[0,T]$,\quad for all~$m\ge0$.}\]
\end{prop}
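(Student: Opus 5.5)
We plan to exploit the triangular structure of~\eqref{eq:limhier-re-1} through an iterated Duhamel expansion. Denote by $U_f^m(t,s)$ the backward propagator of $\partial_t+L_f^m$ on $L^2(f^{\otimes m})$, which factorizes as the tensor product of one-particle propagators $U_f(t,s)$ in each variable. Since $C_m(T)=0$, we have
\[C_m(t)=\sqrt{(m+1)(m+2)}\int_t^TU_f^m(t,s)\,K_f[C_{m+2}(s)]\,ds.\]
Iterating $n$ times expresses $C_m(t)$ as an $n$-fold time-ordered integral $t\le s_1\le\dots\le s_n\le T$ of compositions $U_f^m K_f U_f^{m+2}K_f\cdots U_f^{m+2n-2}K_f[C_{m+2n}(s_n)]$, with combinatorial prefactor $\prod_{k<n}\sqrt{(m+2k+1)(m+2k+2)}\le (m+2n)!^{1/2}/m!^{1/2}\cdot$. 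The factor $1/n!$ from the simplex volume should beat the growth $\sim (m+2n)^{n}$ only if each interaction contributes a small time-factor; so the plan is to show that each $K_f$ insertion, integrated in time against the propagator, costs at most $C\,(T-t)^{\kappa}$ times a bounded quantity, with some $\kappa>0$, and then to argue on short time intervals $[T-\tau,T]$ and iterate backward (the hierarchy is linear and homogeneous, so vanishing on $[T-\tau,T]$ gives trivial final data at $T-\tau$).

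The key step is an estimate on the dual side: rather than bounding $K_f[C_{m+2}]$ in $L^2$ directly (which needs the unavailable $L^{2d/(d-2\gamma)}$ control), we test $C_m(t)$ against $H_m\in L^2(f^{\otimes m})$ and move all propagators onto $H_m$, using the adjoint $\hat L_f$. Each $K_f$ insertion then becomes the adjoint operator $K_f^*$, which creates two new variables $z_*,z_*'$ and multiplies by $K(x_*-x_*')\cdot\nabla_v\log f(z_*)$ (plus the bounded $K\ast f$ part). The adjoint forward-propagated function $G(s)=\hat U(s,t)H$ solves a kinetic equation in the new variables only through the linearized transport; hence we apply Lemma~\ref{lem:hypoell} (resp.\ Lemma~\ref{lem:averaging-hilb} with the other variables as Hilbert-valued parameters) to the time-integrated object $\int K_f^*\,\hat U\,\cdots ds$ in the one pair of variables carrying the singular kernel, gaining $|\nabla_x|^{1/3}$ (resp.\ $|\nabla_x|^{1/4}$ after losing $\frac32$ velocity derivatives, absorbed by~\eqref{eq:reg-req-1}). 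By duality $L^{2d/(d+2\gamma)}\subset H^{-\gamma}$, a gain of $\gamma\le\frac13$ (resp.\ $\frac14$) exactly suffices to pair the singular kernel; this is the origin of~\eqref{eq:red-gamma}. All remaining variables are merely propagated in $L^2$, the one-particle propagators being bounded on weighted $L^2$ thanks to~\eqref{eq:reg-req-2b} (with the $f^{1-\delta}$ weights as in Proposition~\ref{prop:bnd-diff}). At the last level $C_{m+2n}$ we use~\eqref{eq:limhier-ap2}, which gives a factor $L^{(m+2n+1)/2}$, and~\eqref{eq:limhier-ap1}.

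Collecting, I expect a bound of the form $|\langle H_m,C_m(t)\rangle|\lesssim \|H_m\|\,(C L (m+2n)\tau^{\kappa})^{n}L^{m/2}$, with $\kappa>0$ coming from Hölder in time (the square-integrated kinetic gain in time yields $\tau^{1/2}$ per insertion when interpolated against the sup bound). Choosing $\tau$ small depending on $L$ alone does not immediately suffice because of the $(m+2n)^n$ growth versus no $1/n!$ compensation once we use $L^2$-in-time bounds; so the main obstacle is recovering the simplex factor. I would try first to keep the time ordering: perform the kinetic estimate only on the innermost integral at each level, bounding the outer ones by $\tau^{k}/k!$ through sup-in-time $L^2$ bounds, so that the $n$ insertions give $(C\tau)^{n/2}(n!)^{-1/2}$ times $(m+2n)^n/\ldots$; if this still loses, fall back on an Erd\H{o}s--Yau style board-game/tree reorganization of the Duhamel terms, grouping the $(m+2n)!$ choices into $C^n$ classes, which should yield $(C L\tau^{\kappa})^n\to0$ as $n\to\infty$ for $\tau$ small, hence $C_m\equiv0$ on $[T-\tau,T]$ and then on $[0,T]$ by iteration.
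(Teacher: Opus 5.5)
There is a genuine gap, and it is the one you flag yourself: as set up, your expansion never recovers the simplex factor, and neither of your remedies supplies it.

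The first remedy fails quantitatively. Since $\sqrt{(m+2n)!/m!}\ge\sqrt{(2n)!}\ge(2n/e)^n$, a gain of only $(C\tau)^{n/2}(n!)^{-1/2}$ leaves a factor at least $(c\,\tau n)^{n/2}$. This diverges as $n\uparrow\infty$ for every $\tau>0$. The board-game reorganization is left unspecified, and it is not what this problem needs. The root cause is that you apply kinetic regularization to objects involving the test function and all the remaining variables (hence your Hilbert-valued version). The nested time integrals then never decouple, and you are pushed to $L^2$-in-time bounds that destroy the time ordering. Nor can you fall back on sup-in-time bounds for individual insertions, since $K_{f_r}$ itself is not in weighted $L^2$.

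The missing observation is structural: $K_f[\cdot]$ contracts only the two newly created variables, never the existing ones, and the propagator factorizes over particles. Let $\hat U$ be the one-particle forward propagator of $\hat L_f$. Transfer by duality the propagator acting on each created pair onto its kernel, and use the semigroup property. Then the pair created at time $s_j$ carries, at the final time $s$, the fixed two-particle function $\hat U(s,s_j)^{\otimes2}K_{f_{s_j}}$, which is independent of $H_m$ and of the other pairs. Since $C_{m+2\ell}$ is symmetric, the integral over $t<s_1<\dots<s_{\ell-1}<s$ of the tensor product of these functions equals $\frac1{(\ell-1)!}$ times the integral over the cube, and that integral factorizes:
\[
C_m(t)=(-1)^\ell\,\frac{\sqrt{(m+2\ell)!}}{(\ell-1)!\,\sqrt{m!}}\int_t^T U_f^m(t,s)\Big\langle S_{s,t}^{\otimes(\ell-1)},\,K_{f_s}[C_{m+2\ell}(s)]\Big\rangle_{L^2(f_s^{\otimes 2(\ell-1)})}\,ds,\qquad S_{s,t}:=\int_t^s\hat U(s,r)^{\otimes2}K_{f_r}\,dr,
\]
with the pairing taken in the $2(\ell-1)$ created variables.

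Everything thus reduces to one two-particle estimate, $\|S_{s,t}\|_{L^2((f_s^{\otimes2})^{2\delta})}\le Ce^{C(s-t)}\sqrt{s-t}$. This is a sup-in-time gain, not a H\"older-in-time one. It comes from the energy identity for $(\partial_s+\hat L^2_{f_s})S_{s,t}=K_{f_s}$, $S_{t,t}=0$, which bounds $\|S\|^2$ by the time integral of $|\langle S,K_f\rangle_{L^2((f^{\otimes2})^{2\delta})}|$. That pairing is then controlled through kinetic regularity of $S$ itself, followed by buckling.

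Two further points matter for closing the argument:
\begin{itemize}
\item The plain Lemmas~\ref{lem:hypoell} and~\ref{lem:averaging}, with the singular source $K_f$ only in $H^{-\gamma}$ in space, give gains of only $\frac13-\gamma$ and $\frac{1-\gamma}4$. These would require $\gamma\le\frac16$, resp.\ $\gamma\le\frac15$. Reaching~\eqref{eq:red-gamma} requires Remarks~\ref{lem:hypoell-re} and~\ref{lem:averaging-re}, where the non-divergence source gains $\frac23-\gamma$, resp.\ $\frac{1-\gamma}2$.
\item The last insertion uses~\eqref{eq:limhier-ap2}. This is why the propagator must be built on $L^2(f^{2\delta})$, with its adjoint acting on $L^2(f^{2(1-\delta)})$, so that the weights match.
\end{itemize}

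One then obtains
\[
\|C_m(t)\|_{L^2((f_t^{\otimes m})^{2(1-\delta)})}\le C_{T,L}\,\frac{\sqrt{(m+2\ell)!}}{(\ell-1)!\sqrt{m!}}\,\big(CLe^{C(T-t)}\sqrt{T-t}\big)^{\ell-1}.
\]
Since $\frac{\sqrt{(m+2\ell)!}}{(\ell-1)!\sqrt{m!}}\le C^\ell(1+\frac{m^\ell}{\ell!})$, this tends to $0$ as $\ell\uparrow\infty$ for $T-t$ small, and your backward iteration then concludes.
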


\begin{rem}\label{rem:restr-gamma-uniqueness}
Using sharper kinetic regularity estimates in the proof of Lemma~\ref{lem:bnd-BK-delta} below would extend the admissible range of $\gamma$ in this uniqueness result to
\begin{equation}\label{eq:restr-gamma-unique-impr}
0\le\gamma\le\left\{\begin{array}{lll}
5/6&:&\alpha>0,\\
1/2&:&\alpha=0,
\end{array}\right.
\end{equation}
up to strengthening the regularity requirements on $f$. We do not pursue this refinement here, since the regularity estimates of Propositions~\ref{prop:bnd-diff} and~\ref{prop:bnd-nodiff} impose substantially stronger restrictions on~$\gamma$ in any case.
\end{rem}

Before we proceed to the proof, we start with two preliminary results. First, we construct the (non-autonomous) propagator $(B_{t,s})_{0\le s\le t\le T}$ generated by the operator $\hat L_f$ dual to $L_f$, cf.~\eqref{eq:def-hat-Lf}.
Some care is needed to get estimates for this propagator in $L^2(f^{2\delta})$ instead of $L^2(f)$, which will indeed be needed later on to allow pairing with the regularity estimates~\eqref{eq:limhier-ap2} carrying the weight~$f^{1-\delta}$.

\begin{lem}\label{lem:propag-B}
Assume the hypotheses of Proposition~\ref{lem:unique}. For all $0\le s\le t\le T$, there is a bounded operator $B_{t,s}:L^2(f_s^{2\delta})\to L^2(f_t^{2\delta})$ with
\begin{equation}\label{eq:bnd-Bts}
\|B_{t,s}\|_{\Lc(L^2(f_s^{2\delta}),L^2(f_t^{2\delta}))}\le e^{C(t-s)},
\end{equation}
with the semigroup property
\begin{equation}\label{eq:semigr-Bts}
B_{t,r}B_{r,s}=B_{t,s},\qquad 0\le s\le r\le t\le T,
\end{equation}
such that in the weak sense
\begin{equation}\label{eq:gen-Bts}
\left\{\begin{array}{ll}
(\partial_t+\hat L_{f_t})B_{t,s}=0,&0\le s\le t\le T,\\
B_{s,s}=\Id.&
\end{array}\right.
\end{equation}
\end{lem}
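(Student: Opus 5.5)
The plan is to construct $B_{t,s}$ by solving the forward Cauchy problem $(\partial_t+\hat L_{f_t})h=0$ on $[s,T]$ with $h(s)=h_s\in L^2(f_s^{2\delta})$ through a priori energy estimates in the weighted space $L^2(f_t^{2\delta})$. We work on a regularized problem (mollify $K$, add a small $v$-diffusion if $\alpha=0$, truncate the nonlocal term), derive estimates uniform in the regularization, and pass to the limit by weak compactness. Linearity and uniqueness of the limit, obtained from the same energy estimate applied to differences, will then give the semigroup property~\eqref{eq:semigr-Bts} and~\eqref{eq:gen-Bts}.

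\textbf{Energy identity.} We compute $\frac{d}{dt}\int|h|^2f^{2\delta}$, splitting $\hat L_f$ into its transport part, its diffusive parts when $\alpha>0$, and the nonlocal term $\int K_f(z,z_*)h(z_*)f(z_*)dz_*$. We use the Vlasov equation $\partial_tf+v\cdot\nabla_xf+(K\ast f)\cdot\nabla_vf=\alpha\triangle_vf$ for the weight.
\begin{itemize}
\item[(i)] The transport part $v\cdot\nabla_x+(K\ast f)\cdot\nabla_v$ is divergence free. Combined with the transport of $f^{2\delta}$, its contribution cancels exactly.
\item[(ii)] When $\alpha>0$, the terms $-\alpha\triangle_v h-2\alpha\nabla_v\log f\cdot\nabla_vh$ are integrated by parts against $hf^{2\delta}$ and combined with $\alpha\,\partial_t$-contributions of the form $2\delta f^{2\delta-1}\triangle_vf$. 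This produces a dissipation $\alpha\int|\nabla_vh|^2f^{2\delta}$ plus zeroth-order terms. These involve $\int|h|^2f^{2\delta}\big(c_1\triangle_v\log f+c_2|\nabla_v\log f|^2\big)$, with cross terms $h\nabla_vh\cdot\nabla_v\log f$ absorbed into the dissipation.
\item[(iii)] We then invoke the one-sided bound~\eqref{eq:reg-req-2b}, $\triangle_v\log f+\delta|\nabla_v\log f|^2\ge-C$. The signs should work out so that the dangerous unbounded part is controlled from one side only, yielding $\le C\int|h|^2f^{2\delta}$. This is why the weight $f^{2\delta}$ and the specific $\delta$ in~\eqref{eq:reg-req-2b} appear. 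When $\alpha=0$ this step is absent.
\end{itemize}

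\textbf{Nonlocal term.} We bound $\big|\int hf^{2\delta}\int K_f(z,z_*)h(z_*)f(z_*)\big|$, where $K_f(z,z_*)=(K(x-x_*)-K\ast f(x))\cdot\nabla_v\log f(z)$. We write $f^{2\delta}\nabla_v\log f(z)=f^\delta(z)\,\big(f^\delta\nabla_v\log f\big)(z)$ and $h(z_*)f(z_*)=\big(h f^\delta\big)(z_*)\,f^{1-\delta}(z_*)$.

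The bounded part of $K$ and $K\ast f$ is handled by Cauchy--Schwarz, using $f^\delta\nabla_v\log f\in L^2$ and $f^{1-\delta}\in L^2$. The latter follows from $f\in L^1\cap L^\infty$ with $1-\delta\ge\frac12$.

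For the singular part $K\mathds1_B$, we integrate out velocities first. Let $\rho_h(x_*)=\int hf^\delta f^{1-\delta}dv_*$, which is bounded in $L^2_x$ by $\|hf^\delta\|_{L^2}\|f^{1-\delta}\|_{L^\infty_xL^2_v}$. Similarly, $\int hf^{2\delta}\nabla_v\log f\,dv$ is bounded in $L^2_x$ using $f^\delta\nabla_v\log f\in L^\infty_xL^2_v$, which is granted by~\eqref{eq:reg-req-2} or~\eqref{eq:reg-req-1}. It then remains to bound $\int g(x)(K\mathds1_B\ast\rho)(x)dx$ with $g,\rho\in L^2_x$.

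This last bound needs $K\mathds1_B\in L^1$, which holds since $K\in L^{2d/(d+2\gamma)}_\loc\subset L^1_\loc$. Young's inequality then gives the bound by $\|K\|_{L^1(B)}\|g\|_{L^2}\|\rho\|_{L^2}$. Altogether we get $\frac{d}{dt}\|h\|^2_{L^2(f^{2\delta})}\le C\|h\|^2_{L^2(f^{2\delta})}$, and Gr\"onwall yields~\eqref{eq:bnd-Bts}.

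\textbf{Main obstacle.} I expect step (ii)--(iii) to be the hardest part: making the weighted diffusive computation close using only the one-sided bound~\eqref{eq:reg-req-2b} rather than boundedness of $\nabla_v\log f$. The other delicate point is justifying the computation for weak solutions. For that I would rely on the regularization described above together with the local positivity~\eqref{eq:reg-req-0b}, so that the weight manipulations are legitimate on compacts.
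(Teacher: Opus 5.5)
Your a priori ingredients coincide with the paper's. These are the exact cancellation of the transport part against the transported weight, the use of \eqref{eq:reg-req-2b}, and the bound on the nonlocal part $A_f$ in $L^2(f^{2\delta})$ via $\|K\|_{L^1(B)}\|f^{1-\delta}\|_{L^\infty_xL^2_v}\|f^\delta\nabla_v\log f\|_{L^\infty_xL^2_v}$, plus the bounded part of $K$.

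One correction in (ii): the cross term $2\alpha(1-\delta)\int hf^{2\delta}\nabla_v\log f\cdot\nabla_vh$ must not be absorbed into the dissipation. Young's inequality would leave $+\alpha(1-\delta)^2\int|h|^2f^{2\delta}|\nabla_v\log f|^2$, and nothing bounds this from above when $\delta<\frac12$. Instead, write it as $\alpha(1-\delta)\int f^{2\delta}\nabla_v\log f\cdot\nabla_v|h|^2$ and integrate by parts exactly. Together with the weight contribution $\alpha\delta\int|h|^2f^{2\delta}(\triangle_v\log f+|\nabla_v\log f|^2)$, this gives
\[
\frac{d}{dt}\frac12\|h\|_{L^2(f^{2\delta})}^2+\alpha\int|\nabla_vh|^2f^{2\delta}+\alpha(1-2\delta)\int|h|^2f^{2\delta}\big(\triangle_v\log f+\delta|\nabla_v\log f|^2\big)=-\int hf^{2\delta}A_fh,
\]
which is exactly where \eqref{eq:reg-req-2b} closes the estimate.

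The real gap is in the construction and the semigroup property.

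First, your regularization does not preserve the weighted estimate. For $\alpha=0$, adding $-\e\triangle_v$ while $f$ is still transported without diffusion produces the extra term $+\e\delta\int|h|^2f^{2\delta}(\triangle_v\log f+2\delta|\nabla_v\log f|^2)$. The $\alpha=0$ hypotheses do not bound this from above when $\delta<\frac12$; for Maxwellian tails it grows like $|v|^2$. So Gr\"onwall fails already at fixed $\e$. A weight-adapted viscosity $-\e f^{-2\delta}\Div_v(f^{2\delta}\nabla_v h)$, which is nonnegative on $L^2(f^{2\delta})$, would avoid this. Similarly, mollifying $K$ inside $K\ast f$ breaks the cancellation (i) and leaves $\delta\int|h|^2f^{2\delta}(K_\e\ast f-K\ast f)\cdot\nabla_v\log f$.

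Second, and more importantly, \eqref{eq:semigr-Bts} rests on uniqueness of weak solutions in $L^\infty_tL^2(f^{2\delta})$. ``The energy estimate applied to differences'' is not available for mere weak solutions without a renormalization or duality argument, which you do not supply. For $\alpha>0$, with the unbounded drift $2\alpha\nabla_v\log f\cdot\nabla_v$, it is not even clear that the relevant integrals are finite in that class. Without uniqueness, weak operator limits of $B^\e_{t,r}B^\e_{r,s}$ need not equal $B_{t,r}B_{r,s}$.

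The paper avoids all this in three steps:
\begin{enumerate}
\item Since $K\ast f\in W^{1,\infty}$, it builds the propagator $P_{t,s}$ of the local part $\hat L_f^0$ explicitly from the SDE (characteristics if $\alpha=0$) started with law $f_\circ$, as $P_{t,s}\varphi(z)=\E[\varphi(Z_s)\mid Z_t=z]$. Its semigroup property is then just the Markov property.
\item Its weighted bound is your energy identity if $\alpha>0$, and an exact isometry if $\alpha=0$, since the flow is measure-preserving and transports $f$.
\item The bounded operator $A_f$ is added by a norm-convergent Dyson series. This yields \eqref{eq:bnd-Bts}, \eqref{eq:semigr-Bts} and the Duhamel formula algebraically, with no uniqueness theory needed.
\end{enumerate}
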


\begin{proof}
By definition~\eqref{eq:def-hat-Lf}, we can decompose the operator as
\begin{equation}\label{eq:decomp-hatLf}
\hat L_f:=\hat L_f^0+A_f,
\end{equation}
in terms of
\begin{eqnarray*}
\hat L_f^0h&:=&v\cdot\nabla_{x}h-\alpha\triangle_{v}h+(K\ast f)(x)\cdot\nabla_{v}h-2\alpha\nabla_{v}\log f(z)\cdot\nabla_{v}h,\\
A_fh&:=&\int_\Dd K_f(z,z_*)h(z_*)f(z_*)dz_*.
\end{eqnarray*}
We argue by a perturbation argument, starting from the propagator $(P_{t,s})_{0\le s\le t\le T}$ generated by~$\hat L_f^0$.
We split the proof into three steps.

\medskip\noindent
{\bf Step~1:} Construction of $(P_{t,s})_{0\le s\le t\le T}$.\\
To construct this propagator, we appeal to the underlying Markov process. Since by assumption $K\ast f\in L^1(0,T;W^{1,\infty}(\R^d))$, the time-inhomogeneous stochastic differential equation
\begin{equation}\label{eq:markov-XV}
dX_t=V_t\,dt,\qquad dV_t=(K*f_t)(X_t)\,dt+\sqrt{2\alpha}\,dB_t
\end{equation}
is well-posed. We take initial data $Z_0=(X_0,V_0)$ with law $f_\circ$. Since $f$ solves the corresponding Vlasov equation, uniqueness ensures that $Z_t$ has law $f_t$ for all $0\le t\le T$. We then define the backward transition operator, for all $0\le s\le t\le T$,
\begin{equation}\label{eq:def-Pts}
(P_{t,s}\varphi)(z):=\E[\varphi(Z_s)\|Z_t=z],
\end{equation}
which is defined $f_t$-almost everywhere. The Markov property gives
\begin{equation}\label{eq:semigr-Pts}
P_{t,r}P_{r,s}=P_{t,s},\qquad 0\le s\le r\le t\le T.
\end{equation}
Moreover, Jensen's inequality yields
\begin{equation}\label{eq:bnd-Pts-L2f}
\|P_{t,s}\|_{\Lc(L^2(f_s),L^2(f_t))}\le1,\qquad0\le s\le t\le T.
\end{equation}
By duality with the forward transition kernel, $(P_{t,s})_{0\le s\le t\le T}$ satisfies in the weak sense
\begin{equation}\label{eq:gen-Pts}
\left\{\begin{array}{ll}
(\partial_t+\hat L_{f_t}^0)P_{t,s}=0,&0\le s\le t\le T,\\[1mm]
P_{s,s}=\Id.&
\end{array}\right.
\end{equation}

\medskip\noindent
{\bf Step~2:} Proof that
\begin{equation}\label{eq:bnd-Pts-delta}
\|P_{t,s}\|_{\Lc(L^2(f_s^{2\delta}),L^2(f_t^{2\delta}))}\le e^{C(t-s)},\qquad 0\le s\le t\le T.
\end{equation}
Since the boundedness of $f$ ensures $L^2(f^{2\delta})\subset L^2(f)$, the operator $P_{t,s}$ is well-defined on~$L^2(f_s^{2\delta})$, and it remains to estimate its norm.
Given $0\le s\le T$ and $h_s\in L^2(f_s)$, let~$h_t:=P_{t,s}h_s$ for $s\le t\le T$.

We start with the diffusive case $\alpha>0$. Smuggling the weight $f^\delta$ and using the Vlasov equation for~$f$, the equation~\eqref{eq:gen-Pts} for $h_t=P_{t,s}h_s$ yields
\begin{multline*}
\Big(\partial_t+v\cdot\nabla_x-\alpha\triangle_v+(K\ast f)(x)\cdot\nabla_v\Big)f^\delta h\\
=2\alpha(1-\delta)\nabla_v\log f\cdot\nabla_v(f^\delta h)-\alpha\delta(1-\delta)|\nabla_v\log f|^2f^\delta h,
\end{multline*}
from which we can deduce the energy identity
\begin{equation}\label{eq:energy-local-delta}
\partial_t\|f^\delta h\|_{L^2}^2+2\alpha\|\nabla_v(f^\delta h)\|_{L^2}^2
+2\alpha(1-\delta)\int_\Dd\big(\triangle_v\log f+\delta|\nabla_v\log f|^2\big)|f^\delta h|^2=0.
\end{equation}
By the assumption~\eqref{eq:reg-req-2b} on $f$ and by Gronwall's inequality, we deduce for all $s\le t\le T$,
\[\|f_t^\delta h_t\|_{L^2}\le e^{C(t-s)}\|f_s^\delta h_s\|_{L^2},\]
thus proving~\eqref{eq:bnd-Pts-delta}.

We turn to the non-diffusive case $\alpha=0$. As in this setting the characteristic flow associated with~\eqref{eq:markov-XV},
\[\dot X_t=V_t,\qquad\dot V_t=(K\ast f_t)(X_t),\]
is volume-preserving and also transports $f$, we deduce for all $0\le s\le t\le T$,
\begin{equation*}
\|f_t^{\delta}P_{t,s}h_s\|_{L^2}=\|f_s^{\delta}h_s\|_{L^2},
\end{equation*}
which again proves~\eqref{eq:bnd-Pts-delta}.

\medskip\noindent
{\bf Step~3:} Conclusion.\\
It remains to construct the propagator $(B_{t,s})_{0\le s\le t\le T}$ by perturbation. To this aim, first note that the operator $A_f$ in~\eqref{eq:decomp-hatLf} is bounded: indeed,
\begin{multline}\label{eq:bnd-Af-delta-explicit}
\|A_fh\|_{L^2(f^{2\delta})}
\le\Big(\|K\|_{L^1(B)}\|f^{1-\delta}\|_{L^\infty_xL^2_v}\|f^\delta\nabla_v\log f\|_{L^\infty_xL^2_v}\\
+\big(\|K\|_{L^\infty(B^c)}+\|K\ast f\|_{L^\infty}\big)\|f^{1-\delta}\|_{L^2}\|f^\delta\nabla_v\log f\|_{L^2}\Big)
\|h\|_{L^2(f^{2\delta})},
\end{multline}
hence, by the assumptions on $K,f$, on $[0,T]$,
\begin{equation}\label{eq:bnd-Af}
\|A_f\|_{\Lc(L^2(f^{2\delta}))}\le C.
\end{equation}
We can now construct the full propagator by the convergent Dyson series
\begin{equation*}
B_{t,s}=P_{t,s} +\sum_{n=1}^\infty(-1)^n\int_{s<r_n<\ldots<r_1<t}P_{t,r_1}A_{f_{r_1}}P_{r_1,r_2}\ldots A_{f_{r_n}}P_{r_n,s}\,dr_1\cdots dr_n.
\end{equation*}
Indeed, by~\eqref{eq:bnd-Pts-delta} and~\eqref{eq:bnd-Af}, the norm of the $n$th term is bounded by
\[e^{C(t-s)}\frac{(C(t-s))^n}{n!},\]
thus proving~\eqref{eq:bnd-Bts} after summation. The semigroup property~\eqref{eq:semigr-Bts} is easily checked from the above series using~\eqref{eq:semigr-Pts}. The series also implies the Duhamel formula
\begin{equation*}
B_{t,s}=P_{t,s}-\int_s^tP_{t,r}A_{f_r}B_{r,s}\,dr.
\end{equation*}
Since $P_{t,s}$ satisfies~\eqref{eq:gen-Pts}, this entails that $B_{t,s}$ satisfies~\eqref{eq:gen-Bts} in the weak sense.
\end{proof}

Next, we appeal to kinetic regularization effects to establish the following key estimate on the time integral of the singular interaction along the linearized mean-field propagator.

\begin{lem}\label{lem:bnd-BK-delta}
Assume the hypotheses of Proposition~\ref{lem:unique}. 
For all $0\le s\le t\le T$,
\begin{equation}\label{eq:bnd-BK-delta}
\Big\|\int_s^tB_{t,r}^{\otimes2}K_{f_r}dr\Big\|_{L^2((f_t^{\otimes2})^{2\delta})}\le Ce^{C(t-s)}\sqrt{t-s}.
\end{equation}
\end{lem}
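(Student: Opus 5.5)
Set $G_{t,s}:=\int_s^tB_{t,r}^{\otimes2}K_{f_r}\,dr$, a function of $(z_1,z_2)$. The plan is to view it as a solution of a two-particle kinetic equation and apply kinetic regularization in the \emph{dual} direction. The point is that $K_{f_r}\notin L^2(f^{\otimes2})$ in general, since $K\in L^{2d/(d+2\gamma)}_\loc$ is only in $H^{-\gamma}_\loc$. A single application of $B_{t,r}^{\otimes2}$ cannot be bounded, but the time integral can. By~\eqref{eq:gen-Bts} and the semigroup property~\eqref{eq:semigr-Bts}, $G_{t,s}$ solves, as a function of $t\ge s$,
\[
(\partial_t+\hat L_{f_t}^{\otimes 2})G_{t,s}=K_{f_t},\qquad G_{s,s}=0,
\]
where $\hat L^{\otimes2}$ denotes $\hat L_f$ acting in each of the two variables. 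The estimate~\eqref{eq:bnd-BK-delta} is thus an energy estimate in $L^2((f_t^{\otimes2})^{2\delta})$ for a source term of negative spatial regularity $-\gamma$.

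\textbf{Step 1: weighted reformulation.} Set $g_t:=(f_t^{\otimes2})^{\delta}G_{t,s}$. As in Step~2 of the proof of Lemma~\ref{lem:propag-B}, smuggle the weight using the Vlasov equation. This gives a transport (or hypoelliptic, if $\alpha>0$) equation for $g$ with source $(f^{\otimes2})^\delta K_f$, plus bounded lower-order terms. These come from $A_f$, which is bounded by~\eqref{eq:bnd-Af}, and from the drift terms $\nabla_v\log f$ controlled by~\eqref{eq:reg-req-2}--\eqref{eq:reg-req-2b}. The weighted source splits as
\[
(f^{\otimes2})^\delta K_f=K(x_1-x_2)\cdot\big(f^\delta\nabla_v\log f\big)(z_1)\,f^\delta(z_2)\;-\;(K\ast f)(x_1)\,\big(f^\delta\nabla_v\log f\big)(z_1)f^\delta(z_2).
\]
The second piece lies in $L^2$ by~\eqref{eq:reg-req-0}. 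The part of the first piece with $|x_1-x_2|>1$ is also in $L^2$. Only the local singular part $\mathds 1_B K(x_1-x_2)\,a(z_1)b(z_2)$ is problematic. By the Sobolev embedding $L^{2d/(d+2\gamma)}\subset H^{-\gamma}$ in the relative variable $x_1-x_2$, together with $a,b\in L^\infty_xL^2_v$, this piece lies in $\langle\nabla_{x_{[2]}}\rangle^{\gamma}L^2$. In the case $\alpha=0$, the $H^{3/2}_v$ assumptions~\eqref{eq:reg-req-1} allow us to also put $\langle\nabla_{v_{[2]}}\rangle^{3/2}$ on $a,b$.

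\textbf{Step 2: duality with kinetic smoothing.} We pair the equation for $g$ with $g$ itself. The bounded terms are handled by Gronwall, which produces the factor $e^{C(t-s)}$. For the singular source we need
\[
\Big|\int_s^t\langle g_r,S_r\rangle\,dr\Big|\le \Big(\int_s^t\|\langle\nabla_x\rangle^{\gamma}g_r\|^2\Big)^{1/2}\Big(\int_s^t\|\langle\nabla_x\rangle^{-\gamma}S_r\|^2\Big)^{1/2}.
\]
The last factor is $\lesssim\sqrt{t-s}$. The regularity $\int\|\langle\nabla_x\rangle^\gamma g\|^2$ comes from Lemma~\ref{lem:hypoell} or Remark~\ref{lem:hypoell-re} when $\alpha>0$, and from Lemma~\ref{lem:averaging} when $\alpha=0$, applied to $g$ itself. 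For $\alpha>0$ the source sits at regularity $-\gamma$, so choosing $r=\gamma$ gives gain $\frac13-\gamma\ge\gamma$ provided $\gamma\le\frac16$. To cover the full range~\eqref{eq:red-gamma}, we instead write the source as a $v$-divergence plus lower order, using Remark~\ref{lem:hypoell-re}. For $\alpha=0$ averaging gives $\frac{1-r}4$ with $r=\gamma$, which is $\ge\gamma$ when $\gamma\le\frac15$. Here the $\langle\nabla_v\rangle^{-3/2}$ loss is compensated by the $H^{3/2}_v$ regularity of the weights. Combining these, we get
\[
X^2:=\sup_{[s,t]}\|g\|_{L^2}^2\lesssim e^{C(t-s)}\Big(\sqrt{t-s}\,\big((T+1)^{1/2}X+\sqrt{t-s}\big)\Big),
\]
which closes to give $X\lesssim e^{C(t-s)}\sqrt{t-s}$.

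\textbf{Main obstacle.} The delicate point is making this rigorous. $G_{t,s}$ is not a priori in $L^2$, so we would first truncate $K$ to $K_\eta=K\wedge\eta^{-1}$, prove~\eqref{eq:bnd-BK-delta} uniformly in $\eta$, and pass to the limit. A second issue is matching exponents. If the exponents above do not close for the full range in~\eqref{eq:red-gamma} (e.g.\ $\gamma=\frac14$ when $\alpha=0$), we would exploit the fact that the source does not depend on the solution. In that case we apply averaging directly to the dual problem: test against $\psi\in L^2$, transport $\psi$ backward by $(B^{\otimes2})^*$ via Lemma~\ref{lem:averaging-hilb}, which yields $\frac14$ derivatives in $x$. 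We then pair these with $K\in H^{-\gamma}$, $\gamma\le\frac14$, using Cauchy--Schwarz in time.
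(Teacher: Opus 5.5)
Your architecture is the paper's: regularize $K$, view $G_{t,s}$ as the solution of $(\partial_t+\hat L_f^2)G=K_f$ with $G_{s,s}=0$, run the weighted energy estimate with Gronwall, bound $\int\langle g,(f^{\otimes2})^\delta K_f\rangle$ through $H^\gamma_x$ regularity from kinetic smoothing, and close. The gap is in the diffusive case. You apply Lemma~\ref{lem:hypoell} or Remark~\ref{lem:hypoell-re} to $g=(f^{\otimes2})^\delta G$ itself and call the terms produced by smuggling the weight ``bounded lower-order terms''. For $\delta<\frac12$ they are not bounded. In each variable, the equation for $g$ contains $2\alpha(1-\delta)\nabla_v\log f\cdot\nabla_v g$ and $-\alpha\delta(1-\delta)|\nabla_v\log f|^2g$. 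After rewriting, these become $\Div_v(\nabla_v\log f\,g)$, $\triangle_v\log f\,g$ and $|\nabla_v\log f|^2g$. To feed them into the hypoelliptic estimate as sources, you need these quantities in $L^2$. You only control $\|g\|_{L^2}$, and \eqref{eq:reg-req-2} bounds only $f^{(\frac12-\delta)\wedge\frac\delta2}(|\nabla_v\log f|^2+|\triangle_v\log f|)$, not the coefficients themselves; with Maxwellian tails, $\nabla_v\log f\sim-v$. The one-sided bound \eqref{eq:reg-req-2b} is enough for the energy identity \eqref{eq:energy-local-delta}, where the drift is integrated by parts, but not for treating the drift as a source. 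As written, your diffusive step works only for $\delta=\frac12$.

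The missing idea is the paper's weight shift: apply Remark~\ref{lem:hypoell-re} to $(f^{\otimes2})^{\beta}G$ with $\beta=\frac32\delta$, not to $g$. Every coefficient then carries an extra factor $f^{\delta/2}$, and $f^{\delta/2}|\nabla_v\log f|$ and $f^{\delta/2}(|\nabla_v\log f|^2+|\triangle_v\log f|)$ are bounded by \eqref{eq:reg-req-2}. Put the drift part $F_j$ in divergence form with $r=0$, and the remaining part $E$, which contains the singular source, with $s=\gamma$. Then $\|F_j\|_{L^2}+\|\langle\nabla_X\rangle^{-\gamma}E\|_{L^2}\lesssim1+\|g\|_{L^2}$, and the gain is $(\frac23-\gamma)\wedge\frac13\ge\gamma$ for all $\gamma\le\frac13$. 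So it is the drift, not the singular source, that goes in divergence form. In the pairing you split $(f^{\otimes2})^{2\delta}=(f^{\otimes2})^{\beta}(f^{\otimes2})^{\delta/2}$: the regularity sits on $(f^{\otimes2})^\beta G$ and the leftover weight on $K_f$, while the energy estimate stays in $L^2((f^{\otimes2})^{2\delta})$.

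For $\alpha=0$ no such drift appears, and your argument is sound. The paper reaches $\gamma\le\frac14$ directly on $g$ via Remark~\ref{lem:averaging-re} with $s=\gamma$, $r=0$. Your dual fallback also works: apply averaging to the backward-propagated weighted test function, treating $K\ast f\cdot\nabla_v$ as a divergence source at $r=0$. Lemma~\ref{lem:averaging} suffices there, and it gives a quarter derivative in $x$ without any closing step.
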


\begin{proof}
Since $K_f$ needs not belong to the weighted space $L^2((f^{\otimes2})^{2\delta})$, we first regularize the singular part of $K_f$ and then show that the estimate is uniform in the regularization. Applying the same argument to the difference of two regularizations gives convergence and defines the left-hand side.
Let $s=0$ without loss of generality, and set for abbreviation
\[S_t(z_1,z_2):=\int_0^tB_{t,r}^{\otimes2}K_{f_r}(z_1,z_2)\,dr,\]
viewed as an element of $C_w(0,T;L^2((f^{\otimes2})^{2\delta}))$ under regularization of $K$, cf.~Lemma~\ref{lem:propag-B}.
By definition of the propagator, it satisfies in the weak sense,
\begin{equation}\label{eq:defin-S-eqn}
(\partial_t+\hat L_{f}^2)S=K_{f},\qquad S|_{t=0}=0.
\end{equation}
Applying the energy inequality~\eqref{eq:energy-local-delta} in each variable, we obtain
\begin{multline*}
\partial_t\|(f^{\otimes2})^\delta S\|_{L^2}^2
+2\alpha(1-\delta)\sum_{j=1}^2\int_{\Dd^2}\Big(\triangle_v\log f(z_j)+\delta|\nabla_v\log f(z_j)|^2\Big)|(f^{\otimes2})^\delta S|^2\\
\le2\int_{\Dd^2} SK_f(f^{\otimes2})^{2\delta}-2\int_{\Dd^2}S A_f^2(S)(f^{\otimes2})^{2\delta},
\end{multline*}
in terms of
\[A_f^2(S):=\int_\Dd K_f(z_1,z_*)S(z_*,z_2)f(z_*)\,dz_*
+\int_\Dd K_f(z_2,z_*)S(z_1,z_*)f(z_*)\,dz_*.\]
By the assumption~\eqref{eq:reg-req-2b} on $f$ in case $\alpha>0$, and by the operator bound~\eqref{eq:bnd-Af} on $A_f$, we deduce
\begin{equation*}
\partial_t\|(f^{\otimes2})^\delta S\|_{L^2}^2
\le2\int_{\Dd^2} SK_f(f^{\otimes2})^{2\delta}+C\|(f^{\otimes2})^\delta S\|_{L^2}^2,
\end{equation*}
and thus, by Gronwall's inequality,
\begin{equation}\label{eq:S-energy}
\|(f_t^{\otimes2})^\delta S_t\|_{L^2}^2
\le2e^{Ct}\int_0^t\Big|\int_{\Dd^2} SK_f(f^{\otimes2})^{2\delta}\Big|.
\end{equation}
In order to estimate the right-hand side, we appeal to kinetic regularization. We split the proof into two steps, separately considering the cases $\alpha>0$ and $\alpha=0$.

\medskip\noindent
{\bf Step~1:} Diffusive case $\alpha>0$.\\
We do not apply hypoelliptic regularity directly to $(f^{\otimes2})^\delta S$: instead, we use a slightly stronger velocity weight $(f^{\otimes2})^\beta$ with $\beta=\frac32\delta>\delta$.
Smuggling this weight and using the equation for $f$, a direct computation allows to rewrite the equation~\eqref{eq:defin-S-eqn} for $S$ as
\begin{equation}\label{eq:Sbeta-eqn}
\partial_t((f^{\otimes2})^\beta S)+\sum_{j=1}^2(v_j\cdot\nabla_{x_j}-\alpha\triangle_{v_j})((f^{\otimes2})^\beta S)=E+\sum_{j=1}^2\Div_{v_j}(F_j),
\end{equation}
with
\begin{eqnarray*}
E&:=&(f^{\otimes2})^\beta K_f-(f^{\otimes2})^\beta A_f^2(S)\\[-1mm]
&&-\alpha(1-\beta)(f^{\otimes2})^\beta S\sum_{j=1}^2\Big(2\triangle_v\log f(z_j)+\beta|\nabla_v\log f(z_j)|^2\Big)\\[-2mm]
F_j&:=&-\Big(K\ast f(x_j)-2\alpha(1-\beta)\nabla_v\log f(z_j)\Big)(f^{\otimes2})^\beta S.
\end{eqnarray*}
Since $f^\beta=f^{\delta/2}f^\delta$, we can take advantage of the additional factor $f^{\delta/2}$ to bound the terms in the last two lines. By the assumptions on $K,f$, and recalling the bound~\eqref{eq:bnd-Af} on $A_f$, we get
\begin{equation*}
\|\langle\nabla_X\rangle^{-\gamma}E\|_{L^2}+\|F_j\|_{L^2}\lesssim 1+\|(f^{\otimes2})^\delta S\|_{L^2},
\end{equation*}
where we set for abbreviation $Z=(X,V)=(z_1,z_2)$.
Hence, by the refined hypoelliptic estimate of Remark~\ref{lem:hypoell-re},
\begin{equation*}
\int_0^t\|\langle\nabla_X\rangle^{(\frac23-\gamma)\wedge\frac13}((f^{\otimes2})^\beta S)\|_{L^2}^2
\lesssim t+(t+1)\sup_{[0,t]}\|(f^{\otimes2})^\delta S\|_{L^2}^2.
\end{equation*}
By the assumptions on $K,f$, decomposing $(f^{\otimes2})^{2\delta}=(f^{\otimes2})^{\beta}(f^{\otimes2})^{\delta/2}$,
and noting that the restriction $\gamma\le\frac13$ ensures $(\frac23-\gamma)\wedge\frac13\ge\gamma$, we thus obtain
\begin{equation*}
\int_0^t\Big|\int_{\Dd^2}SK_f(f^{\otimes2})^{2\delta}\Big|
\lesssim\int_0^t\|\langle\nabla_X\rangle^\gamma((f^{\otimes2})^\beta S)\|_{L^2}
\lesssim t+\sqrt t(\sqrt t+1)\sup_{[0,t]}\|(f^{\otimes2})^\delta S\|_{L^2}.
\end{equation*}
Combined with the energy estimate~\eqref{eq:S-energy}, this leads us to
\begin{equation*}
\|(f_t^{\otimes2})^\delta S_t\|_{L^2}^2\lesssim e^{Ct}\Big(t+\sqrt t\sup_{[0,t]}\|(f^{\otimes2})^\delta S\|_{L^2}\Big),
\end{equation*}
and the conclusion~\eqref{eq:bnd-BK-delta} follows.

\medskip\noindent
{\bf Step~2:} Non-diffusive case $\alpha=0$.\\
In this case, we can directly apply the averaging lemma to $(f^{\otimes2})^\delta S$, without needing to modify the weight. Equation~\eqref{eq:Sbeta-eqn} for $\alpha=0$, with $\beta$ replaced by $\delta$, reads
\begin{equation*}
\partial_t((f^{\otimes2})^\delta S)+\sum_{j=1}^2v_j\cdot\nabla_{x_j}((f^{\otimes2})^\delta S)=E+\sum_{j=1}^2\Div_{v_j}F_j,
\end{equation*}
with
\begin{eqnarray*}
E&:=&(f^{\otimes2})^\delta K_f-(f^{\otimes2})^\delta A_f^2(S),\\
F_j&:=&-K\ast f(x_j)(f^{\otimes2})^\delta S.
\end{eqnarray*}
By the assumptions on $K,f$, we find
\begin{equation*}
\|\langle\nabla_X\rangle^{-\gamma}E\|_{L^2}+\|F_j\|_{L^2}
\lesssim 1+\|(f^{\otimes2})^\delta S\|_{L^2}.
\end{equation*}
Hence, by the refined averaging lemma of Remark~\ref{lem:averaging-re},
\begin{equation*}
\int_0^t\||\nabla_X|^{\frac{1-\gamma}{2}\wedge\frac14}\langle\nabla_V\rangle^{-\frac32}((f^{\otimes2})^\delta S)\|_{L^2}^2ds
\lesssim t+(t+1)\sup_{[0,t]}\|(f^{\otimes2})^\delta S\|_{L^2}^2.
\end{equation*}
By the assumptions on $K,f$, noting that the restriction $\gamma\le\frac14$ yields $\frac{1-\gamma}2\wedge\frac14\ge\gamma$, we deduce
\begin{eqnarray*}
\int_0^t\Big|\int_{\Dd^2}SK_f(f^{\otimes2})^{2\delta}\Big|
&\lesssim&\int_0^t\|\langle\nabla_X\rangle^\gamma\langle\nabla_V\rangle^{-\frac32}((f^{\otimes2})^\delta S)\|_{L^2}\\
&\lesssim& t+\sqrt t(\sqrt t+1)\sup_{[0,t]}\|(f^{\otimes2})^\delta S\|_{L^2}.
\end{eqnarray*}
Combined with~\eqref{eq:S-energy}, this leads us to
\begin{equation*}
\|(f_t^{\otimes2})^\delta S_t\|_{L^2}^2\lesssim e^{Ct}\Big(t+\sqrt t\sup_{[0,t]}\|(f^{\otimes2})^\delta S\|_{L^2}\Big),
\end{equation*}
and the conclusion~\eqref{eq:bnd-BK-delta} follows.
\end{proof}

We are now in a position to conclude the proof of Proposition~\ref{lem:unique}. We proceed by an explicit examination of the Duhamel expansion.

\begin{proof}[Proof of Proposition~\ref{lem:unique}]
By the regularity estimate~\eqref{eq:limhier-ap2} and the assumptions on $K,f$, we find that the interaction term in the dual hierarchy~\eqref{eq:limhier-re-1} is well-defined, with
\[\|(f^{\otimes m})^{1-\delta}K_f[C_{m}]\|_{L^2}\le
\left\{\begin{array}{ll}
\||\nabla_{x_1}|^\gamma((f^{\otimes m})^{1-\delta}C_{m})\|_{L^2},&\text{if $\alpha>0$},\\
\||\nabla_{x_1}|^\gamma\langle\nabla_{v_{[2]}}\rangle^{-\frac32}((f^{\otimes m})^{1-\delta}C_{m})\|_{L^2},&\text{if $\alpha=0$},
\end{array}\right.\]
and thus, by~\eqref{eq:limhier-ap2},
\begin{equation}\label{eq:interaction-KfC}
\int_t^T\|(f^{\otimes m})^{1-\delta}K_f[C_{m}]\|_{L^2}\le (T-t)^\frac12L^{\frac12(m+1)}.
\end{equation}
In terms of the propagator $(B_{t,s})_{0\le s\le t\le T}$ constructed in Lemma~\ref{lem:bnd-BK-delta}, with $C_m^T=0$, Duhamel's formula for the weak solution of~\eqref{eq:limhier-re-1} yields for all $m\ge0$,
\[C_m^t=-\sqrt{(m+1)(m+2)}\int_t^T(B_{s,t}^{\otimes m})^*K_{f_s}[C_{m+2}^s]\,ds.\]
Iterating this identity, we find for all $m\ge0$ and $\ell\ge1$,
\begin{multline*}
C_m^t=(-1)^\ell\sqrt{(m+1)\ldots(m+2\ell)}\\
\times\int_t^T(B_{s,t}^{\otimes m})^*\bigg\langle \Big(\int_{t<s_1<\ldots<s_{\ell-1}<s}{\textstyle\bigotimes_{j=1}^{\ell-1}}(B_{s,s_j}^{\otimes2}K_{f_{s_j}})\Big),K_{f_{s}}[C_{m+2\ell}^{s}]\bigg\rangle_{L^2(f_{s}^{\otimes2(\ell-1)})}\,ds,
\end{multline*}
that is, by symmetry,
\begin{equation*}
C_m^t=(-1)^\ell\frac{\sqrt{(m+2\ell)!}}{(\ell-1)!\sqrt{m!}}\int_t^T(B_{s,t}^{\otimes m})^*\bigg\langle \Big(\int_t^sB_{s,r}^{\otimes2}K_{f_{r}}dr\Big)^{\otimes\ell-1},K_{f_{s}}[C_{m+2\ell}^{s}]\bigg\rangle_{L^2(f_{s}^{\otimes2(\ell-1)})}\,ds.
\end{equation*}
Taking the norm, decomposing the weight as $f=f^\delta f^{1-\delta}$, and noting that, by duality,~\eqref{eq:bnd-Bts} yields
\[\|B_{s,t}^*\|_{\Lc(L^2(f_s^{2(1-\delta)}),L^2(f_t^{2(1-\delta)}))}\le e^{C(s-t)},\qquad 0\le t\le s\le T,\]
we obtain
\begin{multline*}
\|C_m^t\|_{L^2((f_t^{\otimes m})^{2(1-\delta)})}\le\frac{\sqrt{(m+2\ell)!}}{(\ell-1)!\sqrt{m!}}\int_t^Te^{Cm(s-t)}\Big\|\int_t^sB_{s,r}^{\otimes2}K_{f_{r}}dr\Big\|_{L^2((f_s^{\otimes2})^{2\delta})}^{\ell-1}\\
\times\|(f_s^{\otimes m+2(\ell-1)})^{1-\delta}K_{f_{s}}[C_{m+2\ell}^{s}]\|_{L^2(f_{s}^{\otimes m+2(\ell-1)})}\,ds.
\end{multline*}
Hence, by~\eqref{eq:bnd-BK-delta} and~\eqref{eq:interaction-KfC},
\begin{equation}\label{eq:bnd-Cm-uniqueness}
\|C_m^t\|_{L^2((f_t^{\otimes m})^{2(1-\delta)})}\le C_{T,L}\frac{\sqrt{(m+2\ell)!}}{(\ell-1)!\sqrt{m!}}\big(CLe^{C(T-t)}\sqrt{T-t}\big)^{\ell-1}.
\end{equation}
Using the elementary bound
\[\frac{\sqrt{(m+2\ell)!}}{(\ell-1)!\sqrt{m!}}\le C^\ell\Big(1+\frac{m^\ell}{\ell!}\Big),\]
we find that there is some $\tau_*>0$ such that for any fixed $m\ge0$ the right-hand side in~\eqref{eq:bnd-Cm-uniqueness} tends to $0$ as $\ell\uparrow\infty$ provided $T-t\le\tau_*$. Hence $C_m\equiv0$ on $[T-\tau_*,T]$ for all $m\ge0$. By iteration, the conclusion follows on the whole interval $[0,T]$.
\end{proof}

\subsection{Proof of Theorem~\ref{th:main}}
By Lemma~\ref{lem:limhier}, up to a subsequence, dual correlations converge to a solution of the limiting dual hierarchy~\eqref{eq:limhier1}--\eqref{eq:limhier2} satisfying~\eqref{eq:limhier4}. This problem has a unique solution by Proposition~\ref{lem:unique}. Since a trivial solution of this hierarchy is given by $C_m\equiv\mathds1_{m=0}$ on~$[0,T]$, this is necessarily the unique solution. In particular, we can get rid of the extraction of a subsequence and conclude $\bar C_{N,m}\overset*\cvf0$ in $L^\infty(0,T;L^2(f^{\otimes m}))$ for all $m>0$. In addition, by~\eqref{eq:conv-V} and~\eqref{eq:conv-V-re} in the proof of Lemma~\ref{lem:limhier}, for $m=0$, we further obtain $K_f[\bar C_{N,2}]\cvf0$ weakly in $L^1(0,T)$. By definition of rescaled correlations, cf.~\eqref{eq:barCNn}, this proves~\eqref{eq:conv-CN2-0re}, and the conclusion then follows from Lemma~\ref{lem:duality}.\qed

\subsection*{Acknowledgements}
MD acknowledges financial support from the European Union (ERC, PASTIS, Grant Agreement n$^\circ$101075879).\footnote{{Views and opinions expressed are however those of the authors only and do not necessarily reflect those of the European Union or the European Research Council Executive Agency. Neither the European Union nor the granting authority can be held responsible for them.}} PEJ was partially supported by NSF DMS Grant 2508570.
The authors thank Xuanrui Feng for pointing out a mistake in a previous proof of Proposition~\ref{prop:bnd-nodiff}.


\appendix
\section{Kinetic regularity}\label{app:kinetic-reg}
For completeness, in this appendix, we include short self-contained proofs of the specific kinetic regularity statements of Lemmas~\ref{lem:hypoell} and~\ref{lem:averaging}.

\subsection{Proof of Lemma~\ref{lem:hypoell}}
The result essentially follows from~\cite{Bouchut-02}. Yet, to account for non-zero initial data and to emphasize that multiplicative constants are uniform in $m$, we include for completeness a direct proof based on explicit Fourier calculation.
Denote by $\xi=(\xi_1,\ldots,\xi_m)$ and $\eta=(\eta_1,\ldots,\eta_m)$ the Fourier variables associated with $(x_1,\ldots,x_m)$ and $(v_1,\ldots,v_m)$, respectively.
Let $P$ denote the Fourier projection onto $|\xi|>1$. The low-frequency contribution is simply bounded by
\[\int_0^T\||\nabla_{x_{[m]}}|^{r} (1-P)g_m(t)\|_{L^2}^2\,dt \,\le\, \int_0^T\|g_m(t)\|_{L^2}^2\qquad\text{for $r\ge0$}.\]
It remains to prove the following for the high-frequency part, for $0\le r\le\frac13$,
\begin{equation*}
\int_0^T\||\nabla_{x_{[m]}}|^{\frac{1}3-r} Pg_m\|_{L^2}^2
\le C\|g_m(0)\|_{L^2}^2+C\int_0^T\|\langle\nabla_{x_{[m]}}\rangle^{-r}\langle\nabla_{v_{[m]}}\rangle^{-1}e_m\|_{L^2}^2,
\end{equation*}
for some constant $C$ only depending on $\alpha$.
Applying $\langle\nabla_{x_{[m]}}\rangle^{-r}$ to both sides of the equation for $g_m$, we find that it suffices to prove the above with $r=0$.
By Duhamel's formula, the solution $g_m$ reads
\begin{equation*}
g_m(t)=S_t^mg_m(0)+\int_0^t S_{t-s}^me_m(s)\,ds,
\end{equation*}
where $(S_t^m)_{t\ge0}$ is the semigroup generated by $\sum_{j=1}^m(v_j\cdot\nabla_{x_j}-\alpha\triangle_{v_j})$.
We split the proof into two steps, separately estimating the contribution of initial data and the contribution of the source term.

\medskip\noindent
{\bf Step 1:} Contribution of initial data.\\
We claim that for all $r\ge0$,
\begin{equation}\label{eq:initial-hypo}
{\int_0^T \| |\nabla_{x_{[m]}}|^{r}PS_t^mg_m(0)\|_{L^2}^2\,dt}
\le C\|\langle\nabla_{x_{[m]}}\rangle^{r-\frac13}g_m(0)\|_{L^2(\Dd^m)}^2.
\end{equation}
Taking Fourier transform, a classical calculation of the semigroup $S_t^m$ yields explicitly
\begin{equation}\label{eq:explicit-Stm}
\widehat{S_t^m f}(\xi,\eta)=e^{-\alpha\int_0^t |\eta+s\xi|^2\,ds}\,\hat f(\xi,\eta+t\xi),
\end{equation}
with the shorthand notation $|\xi|^2=\sum_{j=1}^m|\xi_j|^2$.
Changing variables $\zeta=\eta+t\xi$, we then get
\begin{equation*}
\||\nabla_{x_{[m]}}|^{r}PS_t^mg_m(0)\|_{L^2}^2
=\int_{\Dd^m} \mathds1_{|\xi|>1}|\xi|^{2r}e^{-2\alpha\int_0^t |\zeta-s\xi|^2\,ds}\,|\hat g_m(0,\xi,\zeta)|^2\,d\xi d\zeta.
\end{equation*}
Since
\[\int_0^t |\zeta-s\xi|^2\,ds=t\Big|\zeta-\frac t2\xi\Big|^2+\frac{t^3}{12}|\xi|^2\ge \frac{t^3}{12}|\xi|^2,\]
we obtain
\begin{eqnarray*}
{\int_0^T \| |\nabla_{x_{[m]}}|^{r}S_t^mg_m(0)\|_{L^2}^2\,dt}
&\le& \int_{\Dd^m}\mathds1_{|\xi|>1}|\xi|^{2r}\Big(\int_0^T e^{-\tfrac16\alpha t^3|\xi|^2}dt\Big)|\hat g_m(0,\xi,\zeta)|^2\,d\xi d\zeta\\
&\le& C\||\nabla_{x_{[m]}}|^{r-\frac13}Pg_m(0)\|_{L^2(\Dd^m)}^2,
\end{eqnarray*}
for some constant $C$ only depending on $\alpha$. The claim~\eqref{eq:initial-hypo} follows.

\medskip\noindent
{\bf Step 2:} Contribution of the source term.\\
We claim that
\begin{equation}\label{eq:forced-hypo}
\int_0^T\bigg\||\nabla_{x_{[m]}}|^{\frac13}P\Big(\int_0^t S_{t-s}^m e_m(s)\,ds\Big)\bigg\|_{L^2}^2\,dt\le C\int_0^T\|\langle \nabla_{v_{[m]}} \rangle^{-1}e_m(t)\|_{L^2}^2\,dt,
\end{equation}
for some constant $C$ only depending on $\alpha$.
Taking Fourier transform, inserting the explicit form~\eqref{eq:explicit-Stm} of the semigroup, and again changing variables $\zeta=\eta+t\xi$, it suffices to prove
\begin{equation*}
\int_0^T\int_{\Dd^m}\mathds1_{|\xi|>1}\Big|\int_0^T K_{\xi,\zeta}(t,s)\,\hat E_m(s,\xi,\zeta)\,ds\Big|d\xi d\zeta\,dt
\le C\int_0^T\int_{\Dd^m}|\hat E_m(t,\xi,\zeta)|^2d\xi d\zeta\,dt.
\end{equation*}
where we have set for abbreviation
\begin{eqnarray*}
\hat E_m(s,{\xi,\zeta})&:=&\langle \zeta-s\xi \rangle^{-1}\hat e_m(s,\xi,\zeta-s\xi),\\
K_{{\xi,\zeta}}(t,s)&:=&\mathds1_{0\le s\le t\le T}|\xi|^{\frac13}\langle \zeta-s\xi\rangle e^{-\alpha\int_s^t |\zeta-u\xi|^2\,du}.
\end{eqnarray*}
Hence, it suffices to show that, uniformly for all $\xi,\zeta$ with $|\xi|>1$, the kernel $K_{\xi,\zeta}(t,s)$ defines an operator that is bounded on~$L^2(0,T)$, with norm bounded by a constant $C$ only depending on $\alpha$.
To this aim, by Schur's test, it suffices to show
\begin{equation}\label{eq:todo-schur}
\sup_{s\in[0,T]}\int_s^T K_{\xi,\zeta}(t,s)\,dt\le C,\qquad \sup_{t\in[0,T]}\int_0^t K_{\xi,\zeta}(t,s)\,ds\le C.
\end{equation}
We focus on the first bound, while the second one can be checked similarly.
Let $\xi,\zeta$ be fixed with $|\xi|>1$. Using that
\[\int_s^t|\zeta-u\xi|^2\,du=(t-s)|\zeta_{t,s}|^2+\frac{1}{12}(t-s)^3|\xi|^2,\qquad \zeta_{t,s}:=\zeta-s\xi-\frac12(t-s)\xi,\]
and bounding
\[\langle \zeta-s\xi\rangle\le 1+|\zeta_{t,s}|+(t-s)|\xi|,\]
we get
\begin{equation*}
\langle \zeta-s\xi\rangle \int_s^\infty e^{-\alpha\int_s^t|\zeta-u\xi|^2\,du}\,dt
\le\int_s^\infty \big(1+|\zeta_{t,s}|+(t-s)|\xi|\big)\, e^{-\alpha(t-s)|\zeta_{t,s}|^2-\frac{\alpha}{12}(t-s)^3|\xi|^2}\,dt.
\end{equation*}
Using the elementary bound $|\zeta_{t,s}|e^{-\alpha(t-s)|\zeta_{t,s}|^2}\le C(t-s)^{-\frac12}$ and estimating the integrals, we obtain
\begin{eqnarray*}
\langle \zeta-s\xi\rangle \int_0^\infty e^{-\alpha\int_s^t|\zeta-u\xi|^2\,du}\,dt
&\le&\int_s^\infty \Big(1+C (t-s)^{-\frac12}+|\xi| (t-s)\Big)e^{-\frac{\alpha}{12}(t-s)^3|\xi|^2}\,dt\\
&\le&C(|\xi|^{-\frac23}+|\xi|^{-\frac13}).
\end{eqnarray*}
For $|\xi|>1$, this is controlled by $|\xi|^{-\frac13}$, and the first bound in~\eqref{eq:todo-schur} follows.
This concludes the proof of~\eqref{eq:forced-hypo}.\qed

\subsection{Proof of Lemma~\ref{lem:averaging}}
We follow the commutator approach of~\cite{JLT-22}.
Denote again by $\xi=(\xi_1,\ldots,\xi_m)$ and $\eta=(\eta_1,\ldots,\eta_m)$ the Fourier variables associated with $(x_1,\ldots,x_m)$ and $(v_1,\ldots,v_m)$, respectively. We shall show for any $r\in[0,1]$,
\begin{multline}\label{eq:todo-averaging}
\int_0^T\int_{\Dd^m}|\xi|^{\frac{1-r}2}\langle\eta\rangle^{-3}|\hat g_m(\xi,\eta)|^2d\xi d\eta\\
\lesssim \|g_m(0)\|_{L^2}^2+\int_0^T\|g_m\|_{L^2}^2
+\int_0^T\int_{\Dd^m}\langle\xi\rangle^{-2r}\langle\eta\rangle^{-2}|\hat e_m(\xi,\eta)|^2d\xi d\eta.
\end{multline}
First note that we can focus on estimating the high-frequency part in physical space. More precisely, given a cut-off function $\chi\in C^\infty_c(\R^+)$ with $\chi(a)=1$ for $a\le1$, $\chi(a)=0$ for $a\ge2$, and $0\le \chi\le1$, the low-frequency part of the norm is simply controlled by
\begin{equation}\label{eq:low-freq-aver}
\int_0^T\int_{\Dd^m}\chi(|\xi|)|\xi|^\frac{1-r}2\langle\eta\rangle^{-3}|\hat g_m(\xi,\eta)|^2d\xi d\eta
\lesssim \int_0^T \|g_m\|_{L^2}^2.
\end{equation}
Next, let us regularize in velocity at the scale $\langle\xi\rangle^{-\frac{1-r}2}$. More precisely, consider the cut-off function $\gamma(\xi,\eta):=\chi(\langle\xi\rangle^{-\frac{1-r}2}|\eta|)$. Noting that
\[(1-\gamma(\xi,\eta))^2{|\xi|^\frac{1-r}2}{\langle\eta\rangle^{-3}}\le {|\xi|^\frac{1-r}2}{\langle\eta\rangle^{-3}}\mathds1_{|\eta|\ge\langle\xi\rangle^{(1-r)/2}}\le \langle\eta\rangle^{-2}\le1,\]
we find
\[\int_0^T\int_{\Dd^m}(1-\gamma(\xi,\eta))^2|\xi|^\frac{1-r}2\langle\eta\rangle^{-3}|\hat g_m(\xi,\eta)|^2d\xi d\eta
\lesssim \int_0^T\|g_m\|_{L^2}^2.\]
From this together with~\eqref{eq:low-freq-aver}, we deduce that it remains to prove the following estimate, instead of~\eqref{eq:todo-averaging}, for any $r\in[0,1]$,
\begin{multline}\label{eq:todo-averaging-re}
\int_0^T\int_{\Dd^m}(1-\chi(|\xi|))|\xi|^{\frac{1-r}2}\langle\eta\rangle^{-3}|(\gamma\hat g_m)(\xi,\eta)|^2d\xi d\eta\\
\lesssim \|g_m(0)\|_{L^2}^2
+\int_0^T\|g_m\|_{L^2}^2
+\int_0^T\int_{\Dd^m}\langle\xi\rangle^{-2r}\langle\eta\rangle^{-2}|\hat e_m(\xi,\eta)|^2d\xi d\eta.
\end{multline}
Taking Fourier transform, the equation for $g_m$ reads
\[\partial_t\hat g_m-\xi\cdot\nabla_\eta\hat g_m=\hat e_m,\]
and thus, smuggling the cut-off $\gamma$,
\[\partial_t(\gamma\hat g_m)-\xi\cdot\nabla_\eta(\gamma\hat g_m)=\gamma\hat e_m-(\xi\cdot\nabla_\eta\gamma)\hat g_m.\]
As in~\cite{JLT-22}, consider the Fourier multiplier
\[M(\xi,\eta)=1+\frac{\xi\cdot\eta}{|\xi|\langle\eta\rangle},\]
which satisfies $0\le M(\xi,\eta)\le 2$ and
\begin{equation}\label{eq:commut-M}
\xi\cdot\nabla_\eta M(\xi,\eta)=\frac{|\xi|}{\langle\eta\rangle}-\frac{(\xi\cdot\eta)^2}{|\xi|\langle\eta\rangle^3}\ge\frac{|\xi|}{\langle\eta\rangle}-\frac{|\xi||\eta|^2}{\langle\eta\rangle^3}=\frac{|\xi|}{\langle\eta\rangle^3}.
\end{equation}
Multiplying the above equation for $\gamma\hat g_m$ with $2(1-\chi(|\xi|))|\xi|^{\frac{1-r}2-1}M(\xi,\eta)(\overline{\gamma\hat g_m})(\xi,\eta)$, taking the real part, and using $0\le M\le2$, we get
\begin{multline*}
\int_0^T\int_{\Dd^m}(1-\chi(|\xi|))|\xi|^{\frac{1-r}2-1}(\xi\cdot\nabla_\eta M)(\xi,\eta)|(\gamma\hat g_m)(\xi,\eta)|^2d\xi d\eta\\
\le2\int_{\Dd^m} (1-\chi(|\xi|))|\xi|^{\frac{1-r}2-1}|\gamma\hat g_m(0,\xi,\eta)|^2\\
+4\int_0^T\int_{\Dd^m} (1-\chi(|\xi|))|\xi|^{\frac{1-r}2-1}|(\gamma\hat g_m)(\xi,\eta)||(\xi\cdot\nabla_\eta\gamma)\hat g_m(\xi,\eta)|d\xi d\eta\\
+4\int_0^T\int_{\Dd^m} (1-\chi(|\xi|))|\xi|^{\frac{1-r}2-1}|(\gamma\hat g_m)(\xi,\eta)||(\gamma\hat e_m)(\xi,\eta)|d\xi d\eta.
\end{multline*}
Using~\eqref{eq:commut-M} to bound the left-hand side from below, noting that $|\xi|\ge1$ on the support of $1-\chi(|\xi|)$, that $|\xi\cdot\nabla_\eta\gamma|\lesssim\langle\xi\rangle^{\frac{1+r}2}$, and that $|\eta|\le2\langle\xi\rangle^\frac{1-r}2$ on the support of $\gamma(\xi,\eta)$, we deduce
\begin{multline*}
\int_0^T\int_{\Dd^m}(1-\chi(|\xi|))|\xi|^{\frac{1-r}2}\langle\eta\rangle^{-3}|(\gamma\hat g_m)(\xi,\eta)|^2d\xi d\eta\\
\lesssim \|g_m(0)\|_{L^2}^2
+\int_0^T\|g_m\|_{L^2}^2
+\int_0^T\int_{\Dd^m}\langle\xi\rangle^{-r}\langle\eta\rangle^{-1}|\hat g_m(\xi,\eta)||\hat e_m(\xi,\eta)|d\xi d\eta,
\end{multline*}
and the claim~\eqref{eq:todo-averaging-re} follows.\qed

\bibliographystyle{plain}
\bibliography{biblio}

\end{document}